\setlist[enumerate]{itemsep=2pt,parsep=2pt,before={\parskip=2pt}}
\setlist[itemize]{itemsep=2pt,parsep=2pt,before={\parskip=2pt}}
\crefname{equation}{}{}
\newtheorem{theorem}{Theorem}[section]
\newtheorem*{theorem*}{Theorem}
\newtheorem*{definition*}{Definition}
\newtheorem{proposition}[theorem]{Proposition}
\newtheorem{lemma}[theorem]{Lemma}
\theoremstyle{definition}
\newtheorem{definition}[theorem]{Definition}
\newtheorem{remark}[theorem]{Remark}
\newtheorem{notation}[theorem]{Notation}
\renewcommand{\leq}{\leqslant}
\renewcommand{\geq}{\geqslant}
\newcommand{\interval}[2]{\llbracket#1,#2\rrbracket}
\newcommand{\sinterval}[1]{\llbracket#1\rrbracket}
\newcommand{\norm}[1]{\left\|#1\right\|}
\newcommand{\abs}[1]{\left|#1\right|}
\newcommand{\ind}[1]{\mathbf{1}_{#1}}
\newcommand{\eps}{\varepsilon}
\newcommand{\R}{\mathbb{R}}
\newcommand{\Z}{\mathbb{Z}}
\newcommand{\C}{\mathbb{C}}
\newcommand{\E}{\mathbb{E}}
\newcommand{\Tr}{\mathrm{Tr}}
\newcommand{\rank}{\mathrm{rank}_{\d}}
\newcommand{\spmod}[1]{\, (\text{mod}\, #1)}
\newcommand{\bigmid}{\  \Big\vert}
\newcommand{\N}{I_{\mkern -1mu N}}
\newcommand{\primes}{\mathcal{P}}
\newcommand{\D}{\mathcal{D}}
\renewcommand{\d}{\bm{d}}
\newcommand{\dsub}[2]{d_{#1#2}}
\newcommand{\pall}{\d_{*}}
\newcommand{\epsone}{\eps_1}
\newcommand{\V}{V}
\newcommand{\Vtopower}[1]{V^{#1}}
\renewcommand{\b}{b}
\newcommand{\J}{J}
\newcommand{\K}{K}
\newcommand{\intK}{\sinterval{2K}}
\newcommand{\intJ}{\sinterval{J}}
\newcommand{\DD}{\mathbf{D}}
\newcommand{\pred}{\mathbf{Prd}_{\s,\lit,r}}
\newcommand{\unpred}{\mathbf{Unp}_{\s,\lit,r}}
\newcommand{\red}[1]{\mathbf{Red}_{#1}}
\newcommand{\upletters}[1]{{\normalfont{\texttt{#1}}}}
\newcommand{\makesubscript}[1]{\textsubscript{$#1$}}
\newcommand{\progression}{A_{\lit,\d}}
\newcommand{\word}{\mathcal{W}}
\newcommand{\letters}[1]{#1[*]}
\newcommand{\pos}[2]{\mathrm{Pos}(#2; #1)}
\newcommand{\posbis}[2]{\mathrm{Pos}(#2; #1)}
\newcommand{\lit}{\mathcal{L}}
\newcommand{\unlit}{{\kern0.05em}\mathcal{U}}
\newcommand{\s}{{\kern0.05em}\mathcal{S}}
\renewcommand{\L}{L}
\newcommand{\Y}{Y_{\!\L}}
\newcommand{\prohibprog}{\mathcal{Y}}
\newcommand{\shiftedprog}{\prohibprog-\bm{\b}}
\newcommand{\interprog}{(\shiftedprog)^{\cap}}
\newcommand{\oE}{\vec{E}}
\newcommand{\NB}[1]{\mathrm{nbw}(#1)}
\newcommand{\modulus}{q}
\newcommand{\indexset}{\mathcal{I}}
\newcommand{\otherindexset}{\mathcal{I'}}
\newcommand{\XX}{\mathbf{T}}
\newcommand{\Xd}{X_{\d}}
\newcommand{\calX}{\mathcal{X}}
\newcommand{\template}{\theta}
\newcommand{\T}{T}
\newcommand{\Mvar}{M}
\renewcommand{\tocsection}[3]{%
  \indentlabel{\@ifnotempty{#2}{\bfseries\makebox[1.75em][l]{#2}}}\bfseries#3}
\renewcommand{\tocsubsection}[3]{%
  \indentlabel{\@ifnotempty{#2}{\makebox[3em][l]{#1 #2}}}#3}
\newcommand\@dotsep{4.5}
\def\@tocline#1#2#3#4#5#6#7{\relax
  \ifnum #1>\c@tocdepth \else
    \par \addpenalty\@secpenalty\addvspace{#2}%
    \begingroup \hyphenpenalty\@M
    \@ifempty{#4}{%
      \@tempdima\csname r@tocindent\number#1\endcsname\relax
    }{%
      \@tempdima#4\relax
    }%
    \parindent\z@ \leftskip#3\relax \advance\leftskip\@tempdima\relax
    \rightskip\@pnumwidth plus1em \parfillskip-\@pnumwidth
    #5\leavevmode\hskip-\@tempdima{#6}\nobreak
    \leaders\hbox{$\m@th\mkern \@dotsep mu\hbox{.}\mkern \@dotsep mu$}\hfill
    \nobreak
    \hbox to\@pnumwidth{\@tocpagenum{#7}}\par
    \nobreak
    \endgroup
  \fi}
\def\l@subsection{\@tocline{2}{0pt}{2.5pc}{5pc}{}}
\newcommand{\tabularsize}{0.92}
\begin{document}

\title[Improved bounds for the two-point logarithmic {C}howla conjecture]{Improved bounds for \\the two-point logarithmic Chowla conjecture}

\author{C\'edric Pilatte}
\address{Mathematical Institute, University of Oxford.}
\email{cedric.pilatte@maths.ox.ac.uk}

\begin{abstract}
  Let $\lambda$ be the Liouville function, defined as $\lambda(n) := (-1)^{\Omega(n)}$ where $\Omega(n)$ is the number of prime factors of $n$ with multiplicity. In 2021, Helfgott and Radziwi{\l}{\l} proved that
  \begin{equation*}
    \sum_{n\leq x} \frac{1}{n}\lambda(n) \lambda(n+1) \ll \frac{\log x}{(\log \log x)^{1/2}},
  \end{equation*}
  improving earlier results by Tao and Teräväinen. We prove that
  \begin{equation*}
    \sum_{n\leq x} \frac 1n \lambda(n) \lambda(n+1) \ll (\log x)^{1-c}
  \end{equation*}
  for some absolute constant $c>0$. This appears to be best possible with current methods.
\end{abstract}

\maketitle

{
  \vspace{1.6cm}
  \hypersetup{linkcolor=black}
  \tableofcontents
}

\parindent 0pt
\parskip 5pt

\section{Introduction}
\subsection{Background}
Let $\lambda : \mathbb{N} \to \{-1, +1\}$ be the Liouville function, defined by $\lambda(n) := (-1)^{\Omega(n)}$ where $\Omega(n)$ is the number of prime factors of $n$, counted with multiplicity. Its statistical properties are closely connected with the distribution of primes. Indeed, the bounds $\frac{1}{x}\sum_{n\leq x} \lambda(n) = o_{x\to \infty}(1)$ and
\begin{equation*}
    \sum_{n\leq x} \lambda(n) \ll_{\eps}x^{1/2+\eps}
\end{equation*}
are equivalent to the Prime Number Theorem and the Riemann Hypothesis respectively, by elementary arguments. These two examples are consistent with the Liouville pseudorandomness principle, a heuristic which suggests that $\lambda$ should statistically behave like a sequence of independent random variables taking the values $-1$ and $+1$ with probability $1/2$.

For higher-degree correlations, a well-known conjecture of Chowla~\cite{chowla} asserts that, for any $k\geq 1$ and distinct integers $h_1, \ldots, h_{k}$, one has
\begin{equation}
    \label{conj:chowla}
    \frac{1}{x}\sum_{n\leq x} \lambda(n+h_1) \lambda(n+h_2) \cdots \lambda(n+h_k) = o_{x\to \infty}(1).
\end{equation}
This can be regarded as a multiplicative analogue of the Hardy-Littlewood prime $k$-tuple conjecture, which predicts an asymptotic formula for correlations of the von Mangoldt function $\Lambda$. Chowla's conjecture is subject to the \emph{parity problem}, a major obstacle in analytic number theory (see \cite[Section~16.4]{opera} for more details). It is open for all $k\geq 2$.

Yet, in recent years, remarkable progress has been made on weaker variants of Chowla's conjecture.

In 2015, Matomäki, Radziwi\l\l~and Tao proved that \cref{conj:chowla} holds \emph{on average} over $h_1, \ldots, h_k$, for every fixed $k\geq 2$ \cite{MRT}. A crucial ingredient in their proof was the groundbreaking work of Matomäki and Radziwi\l\l~\cite{MR} on sums of multiplicative functions over short intervals.

One year later, Tao proved a \emph{logarithmic} version of Chowla's conjecture for $k=2$ \cite{T}. This means that the regular average $\tfrac1x \sum_{n\leq x} f(n)$ is replaced by the logarithmic average $\frac{1}{\log x} \sum_{n\leq x}\frac{1}{n} f(n)$. Fixing $(h_0, h_1) = (0, 1)$ for simplicity, Tao's result thus reads
\begin{equation}
    \label{eq:logchowla}
    \frac{1}{\log x}\sum_{n\leq x} \frac{1}{n}\lambda(n) \lambda(n+1) = o_{x\to \infty}(1).
\end{equation}
Tao's proof \cite{T}, which used a novel \emph{entropy decrement argument}, was a key step in his resolution of the Erdős discrepancy problem \cite{erdos}. From his paper~\cite{T}, it is possible (see \cite{HU}) to extract the explicit bound
\begin{equation}
    \label{eq:quantit1}
    \sum_{n\leq x} \frac{1}{n}\lambda(n) \lambda(n+1) \ll \frac{\log x}{(\log \log \log \log x)^{1/5}}.
\end{equation}

The logarithmic version of Chowla's conjecture \cref{conj:chowla} was later proved for all \emph{odd} $k\geq 3$ by Tao and Teräväinen~\cite{TTduke}. The two authors gave a different proof of that result in \cite{taoODD}. For \emph{even} $k\geq 4$, the logarithmically averaged Chowla conjecture is still open. The methods of their paper~\cite{taoODD} can be used to obtain the following quantitative refinement of \cref{eq:quantit1}: for some small absolute constant $c>0$,
\begin{equation}
    \label{eq:quantit2}
    \sum_{n\leq x} \frac{1}{n}\lambda(n) \lambda(n+1) \ll \frac{\log x}{(\log \log \log x)^{c}}.
\end{equation}

In 2021, Helfgott and Radziwi\l\l~\cite{HR} proved the substantial quantitative improvement
\begin{equation}
    \label{eq:HRchowla}
    \sum_{n\leq x} \frac{1}{n}\lambda(n) \lambda(n+1) \ll \frac{\log x}{(\log \log x)^{1/2}}.
\end{equation}
They used a very different combinatorial approach, studying the eigenvalues of a certain weighted graph defined in terms of divisibility by small primes. A high-level exposition of their proof is given by Helfgott~\cite{explanation}.

In this paper, we improve the approach of Helfgott and Radziwi\l\l~\cite{HR} to prove the following bound.

\begin{theorem}[Logarithmic two-point Chowla correlations]
    \label{thm:chowla}
    For some absolute constant $c>0$,
    \begin{equation*}
        \sum_{n\leq x} \frac 1n \lambda(n) \lambda(n+1) \ll (\log x)^{1-c}.
    \end{equation*}
\end{theorem}

It appears that saving a fixed power of the logarithm is the best that is achievable with current techniques. Ultimately, our proof relies on the work of Matomäki and Radziwi\l\l~\cite{MR} on multiplicative functions in short intervals, where the current state of the art only allows to save a small power of $\log x$. The exploitation of multiplicativity using an idea of Tao~\cite{T} also separately appears to limit our saving to a small power of $\log x$, because a typical integer has $O(\log x)$ divisors.

The methods of this paper should generalise to a wider class of multiplicative functions through appropriate modifications. The complete multiplicativity of $\lambda$ is only used in \cref{prop:differencesum} and in the proof that \cref{thm:main} implies \cref{thm:chowla}. The only other property of $\lambda$ we use is that it is $1$-bounded (for \cref{sec:thesecondsection,sec:alterations}), but a weaker $\ell^p$ bound would suffice.

Our proof also yields an improved bound for the \emph{unweighted} two-point correlations (i.e.~without logarithmic averaging) at \emph{almost all scales}; see \cref{rem:almostallscales}.

\subsection{Proof outline}
In this section, we give a very short description of the overall strategy. Fuller explanations are given along the way, at various points in the paper.

Using the multiplicativity of $\lambda$, Tao~\cite{T} showed that the problem of bounding $\sum_{n\leq x} \frac 1n \lambda(n) \lambda(n+1)$ reduces to bounding
\begin{equation*}
    \E_{p\in P} \sum_{n\leq x} \lambda(n) \lambda(n+p) \left(\ind{p\mid n}-\frac{1}{p}\right),
\end{equation*}
where $P \subset [1, \exp(\sqrt{\log x})]$ is a set of primes.

Helfgott and Radziwi\l\l~\cite{HR} interpreted the above expression as the matrix product $\bm{\lambda}^{\!\top}\!A\bm{\lambda}$ where $\bm{\lambda} := (\lambda(1), \ldots, \lambda(x))^{\top}$ and $A$ is the matrix with entries
\begin{equation*}
    A_{mn} := \begin{cases} \ind{p\mid n}-\frac{1}{p} &\text{if }\abs{m-n}=p\in P,\\ 0&\text{otherwise.}\end{cases}
\end{equation*}
Hence, it is sufficient to bound the eigenvalues of the matrix $A$, or the eigenvalues of its restriction $A|_{X}$ to some very dense subset $X \subset \{1, \ldots, x\}$. Using a high trace method, Helfgott and Radziwi\l\l~\cite{HR} managed to obtain the bound $\big(\hspace{-1pt}\sum_{p\in P}1/p \big)^{1/2+o(1)}$ for the largest eigenvalue of such a matrix, which is essentially the best possible. Since $\sum_{p\in P}1/p \ll \log \log x$, this approach cannot yield a saving better than a power of $\log \log x$ over the trivial bound for two-point Chowla correlations.

In our new approach, we replace the average over primes $p\in P$ with an average over integers $d = p_1\cdots p_k$ that are products of $k$ primes, where $k\asymp \log \log x$. By Tao's argument, we need to bound
\begin{equation*}
    \E_{d\in D_k} \sum_{n\leq x} \lambda(n) \lambda(n+d) \prod_{p\mid d} \left(\ind{p\mid n} - \frac{1}{p}\right)
\end{equation*}
where $D_k$ is a set of integers with $k$ prime factors. Following the strategy of Helfgott and Radziwi\l\l~\cite{HR}, it is sufficient to bound the eigenvalues of the matrix $\widetilde{A}|_X$ where $\widetilde{A}$ is the matrix defined by
\begin{equation*}
    \widetilde{A}_{mn} := \begin{cases} \prod_{p\mid d}\left(\ind{p\mid n}-\frac{1}{p}\right) &\text{if }\abs{m-n}=d\in D_k,\\ 0&\text{otherwise;}\end{cases}
\end{equation*}
and $X$ is a large subset of $\{1, \ldots, x\}$. We prove that all eigenvalues of $\widetilde{A}|_X$ are bounded by
\begin{equation*}
    \bigg(\sum_{d\in D_k} \frac{1}{d} \bigg)^{1/2+o(1)}
\end{equation*}
in absolute value. Since $k\gg \log\log x$, we can choose $D_k$ so that $\sum_{d\in D_k}1/d \gg (\log x)^{c}$, which produces the exponential improvement in \cref{thm:chowla}.

Unfortunately, working with products of multiple primes rather than single primes introduces new difficulties throughout the argument. It is handling all of these new difficulties which is the key new contribution of our work. We are forced to rework and generalise all the arguments of \cite{HR} with the result that our paper is essentially self-contained.

One particular new difficulty is in \cref{sec:unpredictable} where we wish to bound the number of solutions to systems of divisibility constraints. In the prior work this was a linear system, and so could be bounded by a simple lattice point argument. In our situation this now becomes a polynomial system, and to handle it we require a much more involved argument based on the structure of what we call \emph{unpredictable words}. Another novel feature of our work is the use of \emph{non-backtracking operators}; these considerably simplify certain technical aspects of the proof.

\subsection{Structure of the paper}
We now give a broad overview of the structure of the paper. The reader may wish to refer to \cref{fig:dependency}, which depicts the main propositions of the paper along with their logical dependencies. The paper is designed to be as self-contained as possible. In particular, no prior familiarity with \cite{HR} is needed.

\begin{figure}[H]
    \centering
    \scalebox{1}{
        \begin{tikzpicture}[scale=1]
            \node [draw] (0) at (0, 4.5) {\cref{thm:chowla}}; 
            \node [draw] (1) at (0, 3.25) {\cref{thm:main}}; 
            \node [draw] (3) at (-4, 2) {\cref{prop:differencesum}}; 
            \node [draw] (4) at (0, 2) {\cref{prop:OGmatrixeigenbound}}; 
            \node [draw] (5) at (0, 0.75) {\cref{prop:nonbacktrackingeigenbound}}; 
            \node [draw] (6) at (0, -0.5) {\cref{prop:hightracebound}}; 
            \node [draw] (7) at (-4, 0.75) {\cite[Theorem~1.3]{MRT}};
            \node [draw] (8) at (0, -1.75) {\cref{prop:predictablevector}}; 
            \node [draw] (9) at (4, -1.75) {\cref{prop:unpredictable}};  
            \node [draw] (10) at (4, 0.75) {\cref{prop:iharabass}}; 
            \node [draw] (11) at (-4, -1.75) {\cref{prop:cancellationoverY}}; 

            \draw [-{Latex[length=2.5mm]}] (1) to (0);
            \draw [-{Latex[length=2.5mm]}] (3) to (1);
            \draw [-{Latex[length=2.5mm]}] (4) to (1);
            \draw [-{Latex[length=2.5mm]}] (7) to (3);
            \draw [-{Latex[length=2.5mm]}] (5) to (4);
            \draw [-{Latex[length=2.5mm]}] (10) to (4);
            \draw [-{Latex[length=2.5mm]}] (6) to (5);
            \draw [-{Latex[length=2.5mm]}] (8) to (6);
            \draw [-{Latex[length=2.5mm]}] (9) to (6);
            \draw [-{Latex[length=2.5mm]}] (11) to (6);
        \end{tikzpicture}}
    \caption{Dependency graph for the proof of \cref{thm:chowla} (main propositions only).}
    \label{fig:dependency}
\end{figure}
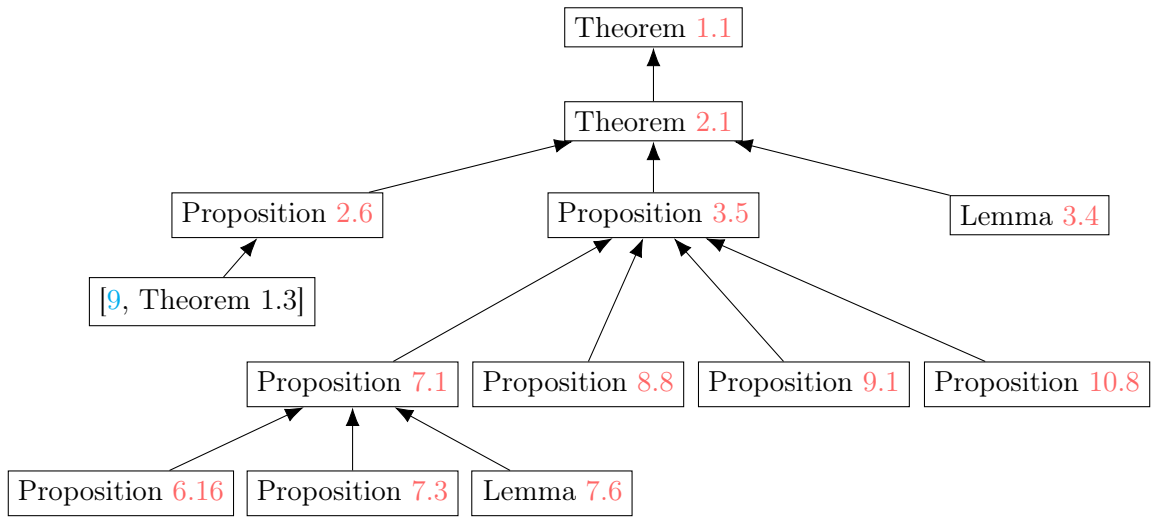

We begin by reducing \cref{thm:chowla}, our bound for two-point logarithmic Chowla correlations, to \cref{thm:main}, an estimate that involves an additional average over products of many primes. This reduction, carried out in \cref{sec:thesecondsection}, follows the clever manipulations of Tao~\cite{T}.

The proof of \cref{thm:main} begins with \cref{prop:differencesum}, which replaces the double sum in \cref{thm:main} with a more convenient `centred' version. The proof of \cref{prop:differencesum} relies on an exponential sum estimate of Matomäki, Radziwi\l\l~and Tao~\cite{MRT}.

Following the approach of Helfgott and Radziwi\l\l~\cite{HR}, our task reduces to proving \cref{prop:OGmatrixeigenbound}, a bound on the eigenvalues of a certain symmetric matrix (denoted $\widetilde{A}|_X$ in the preceding section).

The high trace method cannot be applied directly to this matrix, due to the excessive contribution of `backtracking walks' that repeatedly visit integers with a very large number of prime factors. Helfgott and Radziwi\l\l~\cite{HR} overcame this obstacle by introducing smooth weights to discard such integers. Unfortunately, this method creates considerable technical difficulties, which would be even more severe in our setting.

Instead, we replace our matrix with a related \emph{non-backtracking matrix}, which is no longer symmetric. We relate its spectral properties to those of the original matrix using a fundamental identity for non-backtracking operators, known as the \emph{Ihara-Bass formula} (\cref{prop:iharabass}).

As a result, it suffices to prove \cref{prop:nonbacktrackingeigenbound}, a tail estimate on the eigenvalues of the non-backtracking matrix. In turn, this spectral tail estimate follows from \cref{prop:hightracebound}, which bounds the trace of a high power of the non-backtracking matrix.

The proof of this high trace bound is technically involved, occupying \cref{sec:thesetY,sec:tracecomputations,sec:single,sec:predictable,sec:systems,sec:unpredictable,sec:combinatorial}. To guide the reader, we provide a detailed overview of this part of the argument in \cref{sec:heuristics}.

\addtocontents{toc}{\protect\setcounter{tocdepth}{-1}}
\section*{Acknowledgements}
\addcontentsline{toc}{section}{Acknowledgements}
\addtocontents{toc}{\protect\setcounter{tocdepth}{2}}

The author is supported by the Oxford Mathematical Institute and a Saven European Scholarship. I wish to express my deep gratitude to James Maynard, whose guidance, encouragement, and continuous discussions have played a vital role in the successful completion of this work. Furthermore, I am grateful to Ben Green, Jad Hamdan, Harald Helfgott, Mayank Pandey, Joni Teräväinen, and the PhD students and postdocs of the Oxford analytic number theory group for enriching discussions. I would like to thank Jon Keating for encouraging me to persevere with the non-backtracking approach, and for providing useful references on the Ihara zeta function.

\subsection{Symbols and notation}
For ease of reading, we provide a table showing the main parameters, their size and a reference to where they are introduced.
\begin{center}
    \scalebox{\tabularsize}{
        \begin{tabular}{c|c|c}
            Parameter & Size properties                              & First appearance     \\
            \hline
            $\epsone$ & $\epsone>0$ sufficiently small               & \cref{not:mainparam} \\
            $H$       & $H = H(\epsone)$ sufficiently large          & \cref{not:mainparam} \\
            $H_0$     & $H_0 = \exp\!\big((\log H)^{1-\epsone}\big)$ & \cref{not:mainparam} \\
            $\J$      & $1\leq \J \leq \epsone^2 \log \log H $       & \cref{def:setprimes} \\
            $\V_j$    & $\V_j = \sum_{p\in \primes_j} 1/p\leq \V$    & \cref{def:setprimes} \\
            $\V$      & $\epsone^{-1}\ll \V\ll \epsone \log \log H$  & \cref{def:setprimes} \\
            $N$       & $N \geq \exp \!\big((\log H)^{2.001}\big)$   & \cref{thm:main}      \\
            $\K$      & $\K = \lfloor \log H \rfloor$                & \cref{not:K}         \\
            $\L$      & $\L=\K^{1-10\epsone}$                        & \cref{not:L}
        \end{tabular}}
\end{center}

The following table contains most of the other symbols used repeatedly in the paper.

\begin{center}
    \scalebox{\tabularsize}{
        \begin{tabular}{c|c|c}
            Notation                         & Description                                                               & Reference                              \\
            \hline
            $\primes_j$                      & disjoint sets of primes $\subset (H_0, H)$                                & \cref{def:setprimes}                   \\
            $\primes$                        & $\primes := \bigcup_j \primes_j$                                          & \cref{not:bigP}                        \\
            $\D$                             & set of all products $p_1p_2\cdots p_{\J}$ with each $p_j\in \primes_j$    & \cref{not:bigP}                        \\
            $\N$                             & $\N := \mathbb{N} \cap (N, 2N]$                                           & \cref{not:bigP}                        \\
            $w_Y$                            & explicit function $\N\times \N \to \mathbb{R}$ depending on $Y\subset \Z$ & \cref{def:AYmatrix}                    \\
            $G_Y$                            & graph on $\N$ with edge weights given by $w_Y$                            & \cref{def:specificYgraph}              \\
            $A_Y$                            & weighted adjacency matrix of $G_Y$                                        & \cref{def:AYmatrix,def:specificYgraph} \\
            $M_Y$                            & non-backtracking matrix associated to $G_Y$                               & \cref{def:specificYgraph}              \\
            $\N^{\mathrm{typ}}$              & set of $n\in \N$ with a moderate number of prime factors in $\primes$     & \cref{def:highlydivisiblenumbers}      \\
            $b_i$                            & partial sums $b_i := \sum_{1\leq k\leq i }d_{k}$                          & \cref{lem:tracesimplebound}            \\
            $\prohibprog$                    & set of all prohibited progressions                                        & \cref{def:prohibitedprogression}       \\
            $\Y$                             & integers not contained in a prohibited progression                        & \cref{def:prohibitedprogression}       \\
            $\dsub{i}{j}$                    & unique prime in $\primes_j$ dividing $d_i$                                & \cref{def:dij}                         \\
            $\pall$                          & product of all $\dsub{i}{j}$                                              & \cref{def:dij}                         \\
            $\d_{I}$                         & product of all $\dsub{i}{j}$ with $(i,j)\in I$                            & \cref{def:dij}                         \\
            $\s$                             & set of single indices                                                     & \cref{def:single}                      \\
            $\lit,\unlit$                    & sets of lit and unlit indices                                             & \cref{lem:threeindicestypes}           \\
            $\DD_{\s,\lit,r}$                & set of vectors $\d\in (\pm \D)^{2\K}$ of interest                         & \cref{def:D}                           \\
            $E_{\s,\lit,\unlit,r}$           & quantity to be bounded                                                    & \cref{def:Esum}                        \\
            $\progression$                   & arithmetic progression determined by the lit indices                      & \cref{lem:existencerank}               \\
            $\d^{(k)}$                       & subsequence of $\d$, for $k=1,2,3$                                        & \cref{def:D}                           \\
            $v_{j,\d}^{(k)}, w_{j,\d}^{(k)}$ & words associated to $\d$ with letters in $\primes_j$                      & \cref{def:predunpred}                  \\
            $\pred$                          & vectors $\d$ such that all $w_{j,\d}^{(k)}$ are predictable               & \cref{def:predunpred}                  \\
            $\unpred$                        & vectors $\d$ such that some $w_{j,\d}^{(k)}$ is unpredictable             & \cref{def:predunpred}                  \\
            $\red{R,\s,\lit}$                & reduced vectors $\d\in (\pm \D)^{R}$ compatible with $\s$ and $\lit$      & \cref{def:reducedset}
        \end{tabular}
    }
\end{center}

We write $f\ll g$ or $f = O(g)$ if $|f| \leq Cg$ for some absolute constant $C>0$. The notation $f\asymp g$ means that $f\ll g$ and $g\ll f$.

If $a,b\in \Z$, we write $\interval{a}{b} := \Z\cap [a,b]$ ($=\emptyset$ if $a>b$), and we call a set of this form a \emph{discrete interval}. Its length, or size, is its cardinality ($b-a+1$ if $a\leq b$). For $n\in \mathbb{N}$, we write $\sinterval{n} := \{1,2,\ldots, n\}$.

If $A\subset \R$, we write $\pm A$ for $\{\sigma a: \sigma \in \{\pm1\},\, a\in A\}$.

In this paper, the term \emph{arithmetic progression} always refers to a `two-sided infinite' arithmetic progression of the form $a+q\Z$ for some $a\in \Z$ and $q\in \mathbb{N}$. The modulus $q$ of an arithmetic progression $R = a+q\Z$ is denoted by $\modulus_R$.

For $n\in \Z$, we write $\omega(n)$ for the number of distinct prime factors of $n$. If $\primes$ is a set of primes, we let $\omega_{\primes}(n)$ be the number of primes in $\primes$ that divide $n$. Euler's totient function and the divisor sum function are denoted by $\varphi$ and $\sigma_1$, respectively. The $p$-adic valuation of $n\in \mathbb{N}$ is denoted by $\nu_p(n)$.

If $A = (a_{m,n})_{m,n\in I}$ is a square matrix with complex entries indexed by a set $I$, and $J\subset I$, we write $A|_J$ for the restriction of $A$ to the rows and columns indexed by $J$. The conjugate transpose of $A$ is denoted by $A^*$. The operator norm of $A$ is $\norm{A}_{\mathrm{op}} := \sup_{\norm{x}_2 = 1} \norm{Ax}_2$. The spectral radius of $A$ is the largest absolute value of an eigenvalue of $A$. The eigenvalues of a matrix are always counted with multiplicity.

\section{Main theorem and consequences}
\label{sec:thesecondsection}

\subsection{Main parameters}

To state our main theorem, we need to introduce some parameters. We will use the following notation throughout the paper.

\begin{notation}[$\epsone$, $H$, $H_0$]
    \label{not:mainparam}
    Let $\epsone>0$ be a sufficiently small constant. Let $H\in \mathbb{N}$ be sufficiently large in terms of $\epsone$. Let $H_0 := \exp\!\big((\log H)^{1-\epsone}\big)$.
\end{notation}

\begin{definition}[$\J$, $\primes_i$, $\V_i$, $\V$]
    \label{def:setprimes}
    Let $\J$ be an integer satisfying $1\leq \J \leq \epsone^2 \log \log H$.

    For $1\leq j\leq \J$, let $\primes_j$ be the set of all primes $p$ such that
    \begin{equation*}
        C^{2j-2} < \frac{\log p}{\log H_0} < C^{2j-1},
    \end{equation*}
    where $C := \exp\!\big(\epsone (\log \log H)/(2\J)\big)$.

    For each $1\leq j\leq \J$, we define $\V_j := \sum_{p\in \primes_j} 1/p$, and we let $\V := \max_{1\leq j\leq \J} \V_j$.
\end{definition}

We establish simple estimates concerning the sets $\primes_j$ and the associated parameters $\V_j$, which will be used repeatedly in the sequel.

\begin{lemma}
    \label{lem:primesets}
    The sets $\primes_j$ and parameters $\V_j$ satisfy the following properties:
    \begin{enumerate}[label=(\alph*), ref=\alph*]
        \item \label{item:primesets1} $\primes_1, \ldots, \primes_{\J}$ are disjoint subsets of $(H_0, H)$;
        \item \label{item:primesets2} if $(p_1, \ldots, p_{\J}) \in \primes_1\times \cdots \times \primes_{\J}$, then $p_1p_2\cdots p_{\J}<H$ and $p_1p_2\cdots p_i < p_{i+1}^{1/10}$ for all $1\leq i <\J$;
        \item \label{item:primesets3} $\epsone^{-1}\ll \V \asymp \displaystyle\frac{\epsone \log \log H}{\J}\ll \epsone \log \log H$;
        \item \label{item:primesets4} $\V_1\V_2\cdots \V_{\J} \asymp \V^{\J} \ll (\log H)^{c_0(\epsone)}$ where $c_0(\epsone) := \epsone^2 \log (\frac{1}{2\epsone})$. Moreover, in the special case where ${\J = \lfloor \epsone^2 \log \log H\rfloor}$, we have $\V^{\J} \asymp_{\epsone} (\log H)^{c_0(\epsone)}$.
    \end{enumerate}
\end{lemma}
\begin{proof}
    Let $C := \exp\!\big(\epsone (\log \log H)/(2\J)\big)$; note that $C\geq 20$. Since $\primes_j$ is the set of all primes in the interval
    \begin{equation*}
        \Big(\exp\!\left(C^{2j-2} \log H_0\right),\  \exp\!\left(C^{2j-1} \log H_0\right) \Big),
    \end{equation*}
    property \cref{item:primesets1} is clear. If $(p_1, \ldots, p_{\J}) \in \primes_1\times \cdots \times \primes_{\J}$, then for all $i\in \intJ$ we have
    \begin{equation*}
        p_1p_2\cdots p_i < \exp\! \bigg(\sum_{1\leq j\leq i} C^{2j-1} \log H_0\bigg) \leq \exp \!\Big( 2C^{2i-1} \log H_0\Big) \leq \exp\!\bigg( \frac{C^{2i}}{10} \log H_0\bigg).
    \end{equation*}
    The right-hand side is $\leq p_{i+1}^{1/10}$ if $i<\J$, and equals $H^{1/10}$ if $i=\J$. This proves \cref{item:primesets2}.

    By Mertens' second estimate, we have
    \begin{equation*}
        \V_j = \frac{\epsone \log \log H}{2\J} + O\!\left(\frac{1}{\log H_0}\right),
    \end{equation*}
    which implies property \cref{item:primesets3} and the bounds
    \begin{equation*}
        \V_1\V_2\cdots \V_{\J} \asymp \V^{\J} \asymp \bigg(\frac{\epsone \log \log H}{2\J}\bigg)^{\J}.
    \end{equation*}
    Finally, since $\J \leq \epsone^2 \log \log H$ and the function $x\mapsto (A/x)^x$ is increasing on $[0, A/e]$, property~\cref{item:primesets4} follows.
\end{proof}

\subsection{Statement of the main theorem}
Our bound for the two-point logarithmic Chowla correlations is a consequence of the following key estimate.

\begin{theorem}
    \label{thm:main}
    Let $N$ be an integer such that $\log N \geq (\log H)^{2.001}$. Then
    \begin{equation}
        \label{eq:summainthm}
        \sum_{(p_1, \ldots, p_{\J})\in \primes_1\times \cdots \times \primes_{\J}}\,  \sum_{\substack{n\in (N, 2N] \\ p_1\cdots p_{\J} \mid n}}\lambda(n) \lambda(n+p_1\cdots p_{\J}) \leq e^{O(\J)} (\V_1\cdots \V_{\J})^{1/2} N.
    \end{equation}
\end{theorem}

\begin{remark}
    \label{rem:L}
    \Cref{thm:main} should be compared with the trivial bound $\ll \V_1\cdots \V_{\J} N$.

    For $\J = 1$, we recover the result of Helfgott and Radziwi{\l}{\l}~\cite{HR}. Crucially, however, our approach allows $\J$ to be as large as a small multiple of $\log \log H$. By \cref{lem:primesets}, this flexibility enables us to save a power of $\log H$.
\end{remark}

\begin{remark}
    \label{rem:withabsval}
    In fact, our methods yield a slightly stronger estimate with absolute values:
    \begin{equation}
        \label{eq:summainthmABSVAL}
        \sum_{(p_1, \ldots, p_{\J})\in \primes_1\times \cdots \times \primes_{\J}} \Bigg\lvert \sum_{\substack{n\in (N, 2N] \\ p_1\cdots p_{\J} \mid n}}\lambda(n) \lambda(n+p_1\cdots p_{\J})  \Bigg\rvert \leq e^{O(\J)} (\V_1\cdots \V_{\J})^{1/2} N.
    \end{equation}
    This follows by repeating the proof with arbitrary signs $c_{p_1, \ldots, p_{\J}} \in \{\pm 1\}$, but for clarity we focus on the version in \cref{thm:main}, which suffices for our main application.
\end{remark}

\subsection{Proof of the two-point logarithmic Chowla bound}

\begin{notation}[$\D$, $\N$, $\primes$]
    \label{not:bigP}
    To shorten the expressions, we define $\D\subset (H_0, H)$ to be the set of all products $p_1p_2\cdots p_{\J}$ with $p_j\in \primes_j$ for all $j\in \intJ$. We also write $\N := \mathbb{N} \cap (N, 2N]$ and $\primes := \bigsqcup_{j=1}^{\J} \primes_j$.
\end{notation}

With this notation, we can rewrite the double sum in \cref{eq:summainthm} as
\begin{equation*}
    \sum_{d\in \D} \sum_{\substack{n\in \N\\ d\mid n}} \lambda(n) \lambda(n+d).
\end{equation*}
We now show how a bound on this expression implies a bound on the two-point logarithmic Chowla conjecture. This step is due to Tao \cite{T}, and crucially relies on the multiplicativity of $\lambda$. Together with the proof of \cref{prop:differencesum}, this is the only place where the multiplicativity of $\lambda$ is used -- the rest of the paper will only use that $\lambda$ is $1$-bounded.

\begin{proof}[Proof of \cref{thm:chowla}, assuming \cref{thm:main}]
    Let $\epsone$, $H$ and $H_0$ be as in \cref{not:mainparam}. In particular, $\epsone > 0$ is a sufficiently small absolute constant, and $H$ is chosen sufficiently large in terms of~$\epsone$. Choose $\J:= \lfloor \epsone^2 \log \log H \rfloor$. Let $(\primes_j)$, $(\V_j)$ and $\V$ be as in \cref{def:setprimes}. Thus, by \cref{lem:primesets},
    \begin{equation}
        \label{eq:VJbound}
        V^{\J} \asymp_{\epsone} (\log H)^{\epsone^2 \log (\frac{1}{2\epsone})}.
    \end{equation}
    Finally, let $x := \exp\!\big((\log H)^{6}\big)$.

    For any $d\in \D$, since $\lambda$ is completely multiplicative and $\lambda^2=1$, we may rewrite the logarithmic Chowla average as
    \begin{equation*}
        \sum_{n\leq x} \frac 1n \lambda(n) \lambda(n+1) =  \sum_{n\leq x} \frac 1n \lambda(dn) \lambda(dn+d) = d \sum_{\substack{m\leq dx\\ d\mid m}} \frac 1m \lambda(m) \lambda(m+d).
    \end{equation*}
    Dividing by $d$ and summing over all $d\in \D$ yields
    \begin{equation}
        \label{eq:manip1}
        \V_1\cdots \V_{\J} \sum_{n\leq x} \frac 1n \lambda(n) \lambda(n+1) = \sum_{d\in \D}\, \sum_{\substack{m\leq dx\\ d\mid m}} \frac 1m \lambda(m) \lambda(m+d).
    \end{equation}

    We now apply \cref{thm:main} in a dyadic fashion to bound the right-hand side of \cref{eq:manip1}.

    For every $N$ such that $\log N \geq (\log H)^3 = \sqrt{\log x}$, by \cref{thm:main}, we know that
    \begin{equation}
        \label{eq:largerangemainthm}
        \sum_{d\in \D} \sum_{\substack{n\in \N\\ d\mid n}} \lambda(n) \lambda(n+d) \leq e^{O(\J)} \V^{\J/2} N.
    \end{equation}
    Moreover, when ${\log N \leq \sqrt{\log x}}$, trivially bounding $\abs{\lambda} \leq 1$ we have
    \begin{equation}
        \label{eq:smallrangemainthm}
        \sum_{d\in \D} \sum_{\substack{n\in \N\\ d\mid n}} \lambda(n) \lambda(n+d) \ll \V^{\J} N.
    \end{equation}
    By a suitable dyadic decomposition, \cref{eq:largerangemainthm,eq:smallrangemainthm} together give, for all $M\geq 1$, the bound
    \begin{equation}
        \label{eq:allrangesmainthm}
        \sum_{d\in \D} \sum_{\substack{m\leq M \\ d\mid m}} \lambda(m) \lambda(m+d) \ll \V^{\J} \min\!\big(M, e^{\sqrt{\log x}}\big) + e^{O(\J)}\V^{\J/2} M.
    \end{equation}
    By partial summation (and using \cref{eq:VJbound}), the estimate \cref{eq:allrangesmainthm} implies that
    \begin{equation}
        \label{eq:mainthmpartialsumm}
        \sum_{d\in \D}  \sum_{\substack{m\leq x \\ d \mid m}}\frac{1}{m} \lambda(m) \lambda(m+d) \leq e^{O(\J)}\V^{\J/2} \log x.
    \end{equation}
    This is almost the expression in \cref{eq:manip1}, up to an error
    \begin{equation}
        \label{eq:manip2}
        \Bigg|\sum_{d\in \D}\, \sum_{\substack{x<m\leq dx \\ d\mid m}} \frac 1m \lambda(m) \lambda(m+d)\Bigg| \leq \sum_{d\in \D}\, \sum_{\substack{x<m\leq dx\\ d\mid m}} \frac 1m
        \ll \sum_{d\in \D} \frac{\log(d)}{d} \ll \V^{\J} \log H.
    \end{equation}
    Hence, by \cref{eq:mainthmpartialsumm,eq:manip1,eq:manip2}, we conclude that
    \begin{equation*}
        \sum_{n\leq x} \frac 1n \lambda(n) \lambda(n+1) \ll \frac{1}{\V^{\J}} \left(e^{O(\J)}\V^{\J/2} \log x +\V^{\J} \log H \right).
    \end{equation*}
    By \cref{eq:VJbound}, this is $\ll (\log x)^{1-c}$ for some constant $c>0$ depending only on $\epsone$, as desired.
\end{proof}

\begin{remark}[Two-point Chowla at almost all scales]
    \label{rem:almostallscales}
    Our main result also implies an improved quantitative version of Chowla's conjecture for two-point correlations at \emph{almost all scales}. Namely, for all $2<w\leq X$, we have
    \begin{equation}
        \label{eq:almostallscales}
        \frac{1}{\log w} \int_{X/w}^X \abs{\frac{1}{x} \sum_{n\leq x} \lambda(n) \lambda(n+1)} \frac{dx}{x} \ll \frac{1}{(\log w)^c},
    \end{equation}
    where $c>0$ is an absolute constant. In particular, we get
    \begin{equation*}
        \frac{1}{x} \sum_{n\leq x} \lambda(n) \lambda(n+1) \ll \frac{1}{(\log X)^{c/2}}
    \end{equation*}
    for all $x\in [1, X]$ outside of a set $E_X$ of logarithmic density $O((\log X)^{-c/2})$.\footnote{This means that $\displaystyle{\frac{1}{\log X}\int_1^X \ind{E_X}(x) \frac{dx}{x} \ll (\log X)^{-c/2}}$.}

    This almost all scales result \cref{eq:almostallscales} follows from \cref{eq:summainthmABSVAL} by a straightforward adaptation of the proof in \cite[Section~8]{HR} (which in turn was inspired by \cite{TT}) to our setting.
\end{remark}

\section{Reformulations of the main theorem}
\label{sec:alterations}

\subsection{Centred sum}

The bound in \cref{thm:main} turns out to be easier to prove if the divisibility term $\ind{p_1\mid n}\cdots \ind{p_{\J}\mid n}$ is replaced by the `centred' product $\big(\ind{p_1\mid n} - \frac{1}{p_1}\big)\cdots \big(\ind{p_{\J}\mid n} - \frac{1}{p_{\J}}\big)$. The following proposition shows that working with either of these two expressions is equivalent up to a small error.

\begin{proposition}
    \label{prop:differencesum}
    Let $N$ be an integer such that $\log N \geq (\log H)^{2}$. Let
    \begin{equation*}
        S_1 := \sum_{n\in \N} \sum_{\substack{d\in \D\\ d\mid n}} \lambda(n) \lambda(n+d)\quad \text{and}\quad S_2 := \sum_{n\in \N} \sum_{d\in \D} \lambda(n) \lambda(n+d) \prod_{p\mid d} \bigg(\ind{p\mid n} - \frac{1}{p} \bigg).
    \end{equation*}
    Then
    \begin{equation*}
        \left| S_1 - S_2 \right| \ll \frac{N}{(\log H)^{1/2500}}.
    \end{equation*}
\end{proposition}

\Cref{prop:differencesum} is proved in \cref{appendix:probmodel}, using the circle method and crucially relying on an estimate of Matomäki, Radziwiłł and Tao~\cite{MRT}. We will now focus on the centred expression $S_2$.

\subsection{Reduction to an eigenvalue bound} To prove our main theorem, we analyse the spectral properties of a matrix whose entries correspond to the coefficients $\prod_{p\mid d} \big(\ind{p\mid n} - \frac{1}{p} \big)$ appearing in $S_2$. This key step effectively suppresses the role of the Liouville function in the problem.

\begin{definition}[$w_Y$, $A_Y$]
    \label{def:AYmatrix}
    For any set $Y\subset \Z$, let $w_Y : \N \times \N \to \R$ be the function defined by
    \begin{equation*}
        w_Y(m,n) :=
        \begin{cases}
            \prod_{p \mid d}\left(\ind{p\mid n}-\frac{1}{p}\right) & \text{if }|m-n| = d\in \D\text{ and }m,n\in Y, \\
            \, {0}                                                 & \text{otherwise.}
        \end{cases}
    \end{equation*}
    Define $A_Y$ to be the real symmetric matrix\footnote{Note that the entries of $A_Y$ are indexed by the elements of $\N$, rather than $\{1, \ldots, N\}$.} $A_Y := (w_Y(m,n))_{m,n\in \N}$.
\end{definition}

In this section, we take $Y = \Z$ (so the condition $m,n \in Y$ can be omitted). The general definition of $A_Y$ will be useful later: by analysing $A_Y$ for a different, more technical choice of $Y$ (introduced in \cref{def:prohibitedprogression}), we will obtain spectral information that ultimately applies to $A_{\Z}$.

As a consequence of this analysis, almost all eigenvalues of $A_{\Z}$ are bounded by $e^{O(\J)} \V^{\J/2}$ in absolute value.\footnote{This follows from \cref{prop:OGmatrixeigenbound} via Cauchy’s interlacing theorem.} In fact, we prove a stronger result—\cref{prop:OGmatrixeigenbound}—which, combined with \cref{prop:differencesum}, yields our main theorem. To state it, we introduce the following notation.

\begin{notation}
    \label{not:restriction}
    Given a matrix $A = (a_{m,n})_{m,n\in \N}$ and a subset $X\subset \N$, we write $A|_{X}$ for the principal submatrix $(a_{m,n})_{m,n\in X}$ obtained by deleting all rows and columns at indices not in $X$.
\end{notation}

\begin{proposition}[Spectral radius of restriction]
    \label{prop:OGmatrixeigenbound}
    Let $N$ be an integer such that $\log N \geq (\log H)^{2.001}$.

    Then, there is a subset $X \subset \N$ with $\abs{\N \setminus X} \ll e^{-\J\V/3} N$ such that all eigenvalues of $(A_{\Z})|_{X}$  are bounded in absolute value by $ e^{O(\J)}\V^{\J/2}$.
\end{proposition}

\begin{remark}
    \label{rem:highlydivisiblenumbers}
    The restriction to a dense subset $X \subset \N$ is essential. Indeed, the full matrix $A_{\Z}$ does have very large eigenvalues, which arise from integers $n \in \N$ having an unusually large number of prime factors from $\primes$. For instance, if $n\in \N$ is such that $\omega_{\primes_j}(n) \geq C\V$ for every $1\leq j\leq \J$, and $\bm{v}$ is the standard basis vector corresponding to $n$, then
    \begin{equation*}
        \norm{A_{\Z} \bm{v}}_2 = \Bigg(\sum_m w_{\Z}(m,n)^2\Bigg)^{1/2} \geq \Bigg( \sum_{\substack{d\in \D\\ d\mid n}} \prod_{p\mid d} \bigg( 1 - \frac{1}{p}\bigg)^2 \Bigg)^{1/2} \gg \Bigg( \prod_{j=1}^{\J} \omega_{\primes_j}(n)\Bigg)^{1/2} \gg C^{\J/2} \V^{\J/2}.
    \end{equation*}
    Consequently, $A_{\Z}$ possesses an eigenvalue of size at least $\gg C^{\J/2} \V^{\J/2}$ in absolute value, exceeding the desired bound if $C$ is too large. This large eigenvalue directly reflects the exceptional factorisation of $n$. The previous computation also shows that the eigenvalue bound in \cref{prop:OGmatrixeigenbound} is optimal up to the $e^{O(\J)}$ factor.
\end{remark}

\begin{proof}[Proof of \cref{thm:main}, assuming \cref{prop:OGmatrixeigenbound}]
    Let $S_1$ and $S_2$ be as in \cref{prop:differencesum}. Our goal is to show that $S_1 \leq e^{O(\J)} \V^{\J/2} N$. By \cref{prop:differencesum}, it suffices to prove the same bound for $S_2$.

    Let $\bm{\lambda} := (\lambda(n))_{n\in \N}$ be the vector of Liouville function values. Up to a negligible error, we can express $S_2$ in terms of the matrix $A_{\Z}$ as:
    \begin{equation*}
        S_2 = \sum_{\substack{m,n\in \N\\ m<n}} \lambda(m) \lambda(n) w_{\Z}(m,n) + O(H^2) = \frac{1}{2} \bm{\lambda}^{\!\top}\!A_{\Z}\,\bm{\lambda} + O(H^2).
    \end{equation*}

    To exploit \cref{prop:OGmatrixeigenbound}, we restrict $A_{\Z}$ to the subset $X \subset \N$ and write $\bm{\lambda}|_X := (\lambda(n))_{n\in X}$. Since $\abs{\lambda} \leq 1$, we have
    \begin{equation}
        \label{eq:S2intermediate}
        S_2 = \frac{1}{2} (\bm{\lambda}|_{X})^{\!\top}\!(A_{\Z})|_{X} (\bm{\lambda}|_{X}) + O\Bigg(H^2 + \sum_{n\in \N \setminus X} \sum_{d\in \D} \prod_{p\mid d} \bigg(\ind{p\mid n} + \frac{1}{p} \bigg) \Bigg).
    \end{equation}

    By \cref{prop:OGmatrixeigenbound}, the operator norm of $(A_{\Z})|_{X}$ is bounded by $e^{O(\J)}\V^{\J/2}$, as this is a real symmetric matrix. Therefore, by Cauchy-Schwarz,
    \begin{equation*}
        \big\lvert (\bm{\lambda}|_{X})^{\!\top}\!(A_{\Z})|_{X} (\bm{\lambda}|_{X}) \big\rvert
        \leq \norm{\bm{\lambda}|_{X}}_2 \norm{(A_{\Z})|_{X} (\bm{\lambda}|_{X})}_2
        \leq e^{O(\J)}\V^{\J/2} \norm{\bm{\lambda}|_{X}}_2^2 \leq e^{O(\J)}\V^{\J/2} N,
    \end{equation*}
    which gives the desired bound for the main term in \cref{eq:S2intermediate}.

    It remains treat the error term. By the AM-GM inequality,
    \begin{equation*}
        \sum_{d\in \D} \prod_{p\mid d} \bigg(\ind{p\mid n} + \frac{1}{p} \bigg) = \prod_{j=1}^{\J} \big(\omega_{\primes_j}(n) + \V_j\big) \leq \left(\frac{\omega_{\primes}(n)}{\J} + \V\right)^{\J}.
    \end{equation*}
    Hence, by Cauchy-Schwarz,
    \begin{equation*}
        \sum_{n\in \N \setminus X} \sum_{d\in \D} \prod_{p\mid d} \bigg(\ind{p\mid n} + \frac{1}{p} \bigg) \leq \abs{\N \setminus X}^{1/2}  \Bigg( \V^{2\J} \sum_{n\in \N} \left(\frac{\omega_{\primes}(n)/\V + \J}{\J}\right)^{2\J} \Bigg)^{1/2}.
    \end{equation*}
    Since $(a/k)^k \leq e^a$ for any $a\geq 0$ and $k\in \mathbb{N}$, we have
    \begin{equation*}
        \sum_{n\in \N} \left(\frac{\omega_{\primes}(n)/\V + \J}{\J}\right)^{2\J} \leq e^{O(\J)}\sum_{n\in \N} e^{2\omega_{\primes}(n)/\V} \leq e^{O(\J)} N
    \end{equation*}
    using \cite[Lemma~(3.10)]{restrictedprimes} for the last inequality. Recalling that $\abs{\N \setminus X} \ll e^{-\J\V/3} N$, we conclude that
    \begin{equation*}
        \sum_{n\in \N \setminus X} \sum_{d\in \D} \prod_{p\mid d} \bigg(\ind{p\mid n} + \frac{1}{p} \bigg) \ll e^{O(\J)}\V^{\J} e^{-\J\V/6}  N.
    \end{equation*}
    By part \cref{item:primesets3} of \cref{lem:primesets}, and since $\epsone$ is assumed to be sufficiently small, this is $\ll e^{-\J\V/8} N$. Thus, the error term in \cref{eq:S2intermediate} is negligible compared to our main term bound.
\end{proof}

\section{Approach via non-backtracking operators}
\label{sec:linalg}

In the previous section, we reduced the proof of our main theorem to understanding the spectral properties of the matrix $A_{\mathbb{Z}}$ from \cref{def:AYmatrix}. This matrix can be viewed as the weighted adjacency matrix of a certain weighted graph. To make the analysis more accessible to the trace method, we instead consider a related object: the \emph{non-backtracking matrix} of this graph. We then use the \emph{Ihara-Bass formula} to transfer spectral information back to the original adjacency matrix.

\begin{remark}
    \label{rem:nonbacktrackingtracemethod}
    The use of non-backtracking operators in conjunction with the high trace method has found various applications in other areas of mathematics and computer science, see e.g.~\cite{usewatanabe,bordenaveAlon,fan,friedman}. In particular, Friedman~\cite{friedman} and Bordenave~\cite{bordenaveAlon} employed this approach in their proofs of Alon's second eigenvalue conjecture, which states that random $d$-regular graphs are almost Ramanujan (i.e., near-optimal spectral expanders).
\end{remark}

\subsection{Adjacency and non-backtracking operators}
We now introduce the necessary definitions and notation for working with weighted graphs and their associated linear operators.

\begin{definition}
    \label{def:graphs}
    A \emph{weighted graph} $G$ is a triple $(V, E, w)$ where
    \begin{itemize}
        \item $V$ is a finite set of vertices,
        \item $E \subseteq \binom{V}{2}$ is the set of edges,
        \item $w : E \to \R$ is a real-valued weight function.
    \end{itemize}
    The corresponding set of \emph{directed edges} is defined as
    \begin{equation*}
        \oE := \{(x,y) \in V^2 : \{x,y\} \in E\}.
    \end{equation*}
    For any $(x,y) \in \oE$, we define its weight as $w(x,y) := w(\{x,y\})$. For pairs $(x,y) \notin \oE$, we set $w(x,y) := 0$. Note that this implies $w(x,y) = w(y,x)$ and $w(x,x)=0$.
\end{definition}

\begin{notation}
    \label{not:vectorspaces}
    For any set $S$, we write $L(S)$ for the $\C$-vector space freely generated by $S$. Thus, $L(V)$ and $L(\oE)$ are $\C$-vector spaces with bases $V$ and $\oE$ respectively, and we denote their corresponding basis vectors by $\texttt{x}$ for $x \in V$ and $\overrightarrow{\texttt{xy}}$ for $(x,y) \in \oE$.
\end{notation}

\begin{definition}
    \label{def:adjacency}
    The \emph{adjacency operator} $A : L(V) \to L(V)$ is the linear map defined for each basis vector $\texttt{x}$ of $L(V)$ by
    \begin{equation*}
        A(\texttt{x}) := \sum_{y\in V} w(x,y)\, \texttt{y}.
    \end{equation*}
    The \emph{non-backtracking operator} $M : L(\oE)\to L(\oE)$ acts on a basis vector $\vec{\texttt{xy}}$ (where $(x,y)\in \oE$) as
    \begin{equation*}
        M(\overrightarrow{\texttt{xy}}):= \sum_{\substack{(y,z)\in \oE\\ z\neq x}} w(y,z)\, \overrightarrow{\texttt{yz}}
    \end{equation*}
    and extends linearly to $L(\oE)$.
\end{definition}

We will sometimes identify these operators with their matrices in the natural bases of $L(V)$ and $L(\oE)$, respectively. Note that $A$ is symmetric, while $M$ is usually not.

\subsection{The Ihara-Bass formula}
The \emph{Ihara-Bass formula} establishes a remarkable connection between the characteristic polynomial of the non-backtracking operator and a deformed characteristic polynomial of the adjacency operator. This formula originates from the work of Ihara~\cite{ihara}, who proved it in an algebraic setting corresponding to regular graphs, and was later extended by Bass~\cite{bass} to the general unweighted case. The paper by Stark and Terras~\cite{stark} is a recommended source for a transparent and elegant proof.

In our setting, we require a version of the Ihara-Bass formula applicable to weighted graphs. We have identified three papers that seem to have derived such an extension independently~\cite{deitmar,fan,watanabe}. We state below a formulation adapted to our notation.

\begin{proposition}[Ihara-Bass formula]
    \label{prop:iharabass}
    Let $G = (V, E, w)$  be a weighted graph, with adjacency and non-backtracking operators $A$ and $M$. Define the linear maps ${D_u, O_u : L(V) \to L(V)}$ for each basis vector $\text{\upshape{\texttt{x}}}$ by
    \begin{equation}
        \label{eq:DuOu}
        D_{u} (\text{\upshape{\texttt{x}}}) := \sum_{y\in V} \frac{w(x,y)^2}{1-u^2 w(x,y)^2} \text{\upshape{\texttt{x}}} \quad \text{and}\quad O_{u} (\text{\upshape{\texttt{x}}}) := u \sum_{y\in V} \frac{w(x,y)^3}{1-u^2 w(x,y)^2} \text{\upshape{\texttt{y}}},
    \end{equation}
    where $u\in \C$ is a parameter chosen such that all denominators are non-zero. Then,\footnote{In the formula \cref{eq:IharaBass}, the same letter $I$ denotes the identity operator on $L(\oE)$ on the left-hand side, and the identity operator on $L(V)$ on the right-hand side.}
    \begin{equation}
        \label{eq:IharaBass}
        \det(I-uM) =  \det\!\big(I-uA+u^2 (D_u-O_u)\big) \prod_{\{x,y\}\in E}\big(1-u^2w(x,y)^2\big).
    \end{equation}
\end{proposition}

We detail in \cref{appendix:nonbacktracking} how \cref{prop:iharabass} can be derived from the results in~\cite{deitmar}, where the notation differs slightly from ours.

The Ihara-Bass formula will be applied to the graph whose edges are weighted by the function $w_Y$ from \cref{def:AYmatrix}.

\begin{definition}[$G_Y$, $M_Y$, $D_{Y,u}$, $O_{Y,u}$]
    \label{def:specificYgraph}
    For any set $Y\subset \Z$, we define $G_Y$ to be the weighted graph with vertex set $\N$ and weight function $w_Y$.\footnote{The edge set of $G_Y$ is defined to be the set of all $\{m,n\}\subset \N$ with $w_Y(m,n)\neq 0$.}

    We write $A_Y$ and $M_Y$ for the adjacency and non-backtracking operators associated to $G_Y$.\footnote{Note that this definition of $A_Y$ is consistent with the one in \cref{def:AYmatrix}.} In addition, we denote by $D_{Y,u}$ and $O_{Y,u}$ the operators $D_u$ and $O_u$ from \cref{prop:iharabass}, specialised to the weighted graph~$G_Y$.
\end{definition}

We now state a central proposition, which provides a bound for almost all eigenvalues of the non-backtracking operator $M_Y$, for some well-chosen set $Y$. Recall that eigenvalues are always counted with multiplicity.

\begin{proposition}[Spectral tail bound for $M_Y$]
    \label{prop:nonbacktrackingeigenbound}
    Let $N$ be an integer such that $\log N \geq (\log H)^{2.001}$.

    Then, there exists a set $Y\subset \Z$ with the following properties.
    \begin{enumerate}
        \item  $\abs{\N \setminus Y} \ll H_0^{-1/3}N$.
        \item  All but $\leq N/H^3$ eigenvalues of $M_Y$ have absolute value at most $e^{O(\J)}\V^{\J/2}$.
    \end{enumerate}
\end{proposition}

The plan for the rest of this section is to use the Ihara-Bass formula to relate the spectrum of $M_Y$ to that of the adjacency operator $A_Y$. This will allow us to prove \cref{prop:OGmatrixeigenbound} assuming \cref{prop:nonbacktrackingeigenbound}. The proof of \cref{prop:nonbacktrackingeigenbound} will occupy the rest of the paper.

\begin{remark}
    \label{rem:iharabasscomparison}
    Several previous works have also used the Ihara-Bass formula to relate the spectra of the adjacency and non-backtracking operators~\cite{usewatanabe,bordenaveAlon,friedman}.

    For $d$-regular graphs (the setting of~\cite{bordenaveAlon,friedman}), this relation is immediate: the nontrivial eigenvalues $\lambda$ of the non-backtracking operator are obtained explicitly as the roots of the quadratic equations $\lambda^2 - \mu \lambda + (d-1) = 0$, where $\mu$ ranges over the eigenvalues of the adjacency operator.

    The random graphs considered in \cite{usewatanabe} have a maximum degree that is, with high probability, very close to the average degree (a consequence of concentration of measure). This near-regularity allows the perturbative term in the Ihara-Bass formula to be bounded uniformly.

    In this paper, the weighted graph $G_Y$ is highly irregular, with some vertices having very large weighted degrees. Moreover, we require a quantitative analysis that controls not only the magnitude, but also the number of large eigenvalues.
\end{remark}

\subsection{Integers with many prime factors}

As explained in \cref{rem:highlydivisiblenumbers}, the integers $n\in \N$ with an unusually large number of prime factors in $\primes$ are responsible for large eigenvalues of the adjacency operator $A_Y$ (which disappear when passing to the non-backtracking operator $M_Y$). To handle these large eigenvalues, we study the restriction of $A_Y$, $D_{Y,u}$, and $O_{Y,u}$ to the integers not having much more than the typical number of prime factors from $\primes$.

\begin{definition}
    \label{def:highlydivisiblenumbers}
    Let $\N^{\mathrm{typ}}$ be the set of integers $n\in \N$ such that $\omega_{\primes}(n) \leq 2\J\V$.
\end{definition}

\Cref{lem:operatorbound} shows that the matrices $D_{Y,u}$ and $O_{Y,u}$, when restricted to $\N^{\mathrm{typ}}$, have small operator norms, uniformly in $u\in [-1/2, 1/2]$. It relies on the following elementary bound.

\begin{lemma}[$\ell_1-\ell_{\infty}$ bound]
    \label{lem:l2l1linfty}
    Let $A \in M_n(\C)$ be a matrix with entries $(a_{ij})_{1\leq i,j\leq n}$. Then
    \begin{equation*}
        \norm{A}_{\mathrm{op}}^2 \leq \bigg(\!\max_{i} \sum_{j=1}^n |a_{ij}|\bigg)\bigg(\!\max_{j} \sum_{i=1}^n |a_{ij}| \bigg).
    \end{equation*}
\end{lemma}
\begin{proof}
    By Cauchy-Schwarz, for any $v\in \C^n$ we have
    \begin{equation*}
        \norm{Av}_2^2 \leq \sum_{i=1}^n \bigg(\sum_{j=1}^n |a_{ij}|\bigg)\bigg(\sum_{j=1}^n |a_{ij}| |v_j|^2\bigg) \leq  \bigg(\!\max_{i} \sum_{j=1}^n |a_{ij}|\bigg)\bigg(\!\max_{j} \sum_{i=1}^n |a_{ij}| \bigg) \norm{v}_2^2,
    \end{equation*}
    which gives the claimed operator norm bound.
\end{proof}

\begin{lemma}
    \label{lem:operatorbound}
    Let $Y\subset \Z$ be any set. Then, for any $u\in [-1/2, 1/2]$,
    \begin{equation*}
        \big\|{D_{Y,u}|_{\N^{\mathrm{typ}}} \big\|}_{\mathrm{op}},  \big\|{O_{Y,u}|_{\N^{\mathrm{typ}}} \big\|}_{\mathrm{op}} \leq e^{O(\J)} \V^{\J}.
    \end{equation*}
\end{lemma}

\begin{proof}
    We apply the simple $\ell_1-\ell_{\infty}$ bound given by \cref{lem:l2l1linfty}. Uniformly in $u\in [-1/2, 1/2]$, using that $\norm{w_Y}_{\infty}\leq 1$, we get
    \begin{equation*}
        \big\|{D_{Y,u}|_{\N^{\mathrm{typ}}} \big\|}_{\mathrm{op}},  \big\|{O_{Y,u}|_{\N^{\mathrm{typ}}} \big\|}_{\mathrm{op}} \ll \max_{m\in \N^{\mathrm{typ}}} \sum_{n\in \N^{\mathrm{typ}}} \abs{w_Y(m,n)}.
    \end{equation*}
    By the AM-GM inequality, for any $m\in \N^{\mathrm{typ}}$, we have
    \begin{equation*}
        \sum_{n\in \N^{\mathrm{typ}}} \abs{w_Y(m,n)} \ll \sum_{d\in \D}\prod_{p \mid d}\left(\ind{p\mid m}+\frac{1}{p}\right) = \prod_{j=1}^{\J} \big(\omega_{\primes_j}(m) + \V_j\big) \leq \left(\frac{\omega_{\primes}(m)}{\J} + \V\right)^{\J}.
    \end{equation*}
    Since $\omega_{\primes}(m) \leq 2\J\V$ by definition of $\N^{\mathrm{typ}}$, the claimed bound follows.
\end{proof}

Intuitively, \cref{lem:operatorbound} indicates that the terms $D_{Y,u}$ and $O_{Y,u}$ in the Ihara-Bass formula \cref{eq:IharaBass} should constitute only a small perturbation to the characteristic polynomial of~$A_Y$, once we restrict to $\N^{\mathrm{typ}}$. Moreover, this restriction should only affect a few eigenvalues, since the condition $\omega_{\primes}(n) \leq 2\J\V$ is satisfied by most integers $n\in \N$.

To make these ideas precise, we employ a topological argument together with Cauchy's interlacing theorem. The following lemma will be used to handle eigenvalues occurring with multiplicity greater than one.

\begin{lemma}
    \label{lem:topology}
    Let $P\in \C[X]$ be a non-zero polynomial. Suppose that, for $x\in [0, 1]$,
    \begin{equation*}
        P(x) = f_1(x) f_2(x) \cdots f_n(x) g(x)
    \end{equation*}
    where $f_i:[0, 1]\to \C$ are Lipschitz continuous functions and $g:[0, 1]\to \C$ is a continuous function. If each $f_i$ has a root in $(0, 1)$, then $P$ has at least $n$ roots in $(0, 1)$, counting multiplicities.
\end{lemma}

\begin{proof}
    Let $x_0\in (0, 1)$ be a point such that $f_i(x_0) = 0$ for all $i$ in a subset $I\subset \{1, \ldots, n\}$. It suffices to show that $P^{(k)}(x_0) = 0$ for all $0\leq k< \abs{I}$. We prove this by induction on $k$. Suppose that $P(x_0) = P'(x_0) = \cdots = P^{(k-1)}(x_0) = 0$ for some $0\leq k< |I|$. By Taylor expansion,
    \begin{equation*}
        P^{(k)}(x_0) = k! \lim_{x\to x_0} \frac{P(x)}{(x-x_0)^k} = k! g(x_0)\prod_{i\notin I} f_i(x_0)\cdot  \lim_{x\to x_0} (x-x_0)^{|I|-k}  \prod_{i\in I} \frac{f_i(x)}{x-x_0}.
    \end{equation*}
    Since each $f_i$ is Lipschitz continuous, the last product remains bounded as $x\to x_0$, and thus $P^{(k)}(x_0) = 0$ as desired.
\end{proof}

Using \cref{lem:operatorbound,lem:topology}, we can now relate the eigenvalues of the restricted adjacency matrix $A_Y|_{\N^{\mathrm{typ}}}$ to those of the non-backtracking matrix $M_Y$ via the Ihara-Bass formula.

To do this, we require two classical facts on eigenvalues of Hermitian matrices. For any Hermitian matrix $M\in M_n(\C)$, write ${\lambda_1(M) \geq \ldots \geq \lambda_n(M)}$ for the eigenvalues of $M$ in descending order.

\begin{lemma}[Weyl inequalities]
    \label{lem:weylineq}
    Let $A,B\in M_n(\C)$ be Hermitian matrices. Then
    \begin{equation*}
        \lambda_{i+h}(A) + \lambda_{n-h}(B) \leq \lambda_i(A+B) \leq \lambda_{i-h}(A) + \lambda_{h+1}(B)
    \end{equation*}
    for all $1\leq i\leq n$ and $0\leq h\leq \min(i-1, n-i)$.
\end{lemma}
\begin{proof}
    This is \cite[Theorem~III.2.1]{matrixanalysis}.
\end{proof}

\begin{lemma}[Cauchy's interlacing theorem]
    \label{lem:interlacing}
    Let $A \in M_n(\C)$ be a Hermitian matrix and let $B$ be the matrix obtained by deleting $r$ rows of $A$ and the $r$ corresponding columns, for some $r< n$. Then
    \begin{equation*}
        \lambda_{i+r}(A) \leq \lambda_i(B) \leq \lambda_i(A)
    \end{equation*}
    for all $1\leq i\leq n-r$.
\end{lemma}
\begin{proof}
    This is \cite[Corollary~III.1.5]{matrixanalysis}.
\end{proof}

\begin{lemma}
    \label{lem:adjmatrixeigenbound}
    Let $C\geq 1$ be a sufficiently large constant. Let $Y\subset \Z$ be any set and let $k\geq 1$.

    If the restricted adjacency matrix $A_Y|_{\N^{\mathrm{typ}}}$ has at least~$k$ eigenvalues larger than $2C^{\J}\V^{\J/2}$ in absolute value, then the non-backtracking matrix $M_Y$ must have at least $k$ eigenvalues larger than $C^{\J}\V^{\J/2}$ in absolute value.
\end{lemma}

\begin{proof}
    Since $A_Y|_{\N^{\mathrm{typ}}}$ is real and symmetric, its eigenvalues are real. We treat the case of positive eigenvalues, the negative case being similar.

    Suppose that $A_Y|_{\N^{\mathrm{typ}}}$ has at least $k$ positive eigenvalues larger than $2C^{\J}\V^{\J/2}$. By the Weyl inequalities (\cref{lem:weylineq}), the $k$ largest eigenvalues of the perturbed matrix
    \begin{equation*}
        \big(A_Y - u (D_{Y,u}-O_{Y,u})\big)\big|_{\N^{\mathrm{typ}}}
    \end{equation*}
    are bounded below by
    \begin{equation}
        \label{eq:lowerboundperturbeigen}
        2 C^{\J}\V^{\J/2} - u \big\|{D_{Y,u}|_{\N^{\mathrm{typ}}} \big\|}_{\mathrm{op}} -u \big\|{O_{Y,u}|_{\N^{\mathrm{typ}}} \big\|}_{\mathrm{op}}.
    \end{equation}
    By \cref{lem:operatorbound}, the expression \cref{eq:lowerboundperturbeigen} is greater than $C^\J\V^{\J/2}$ for every $u \in [-\V^{-\J/2}, \V^{-\J/2}]$, provided $C$ is sufficiently large. Hence, by Cauchy's interlacing theorem (\cref{lem:interlacing}), the unrestricted matrix
    \begin{equation*}
        A_Y - u (D_{Y,u}-O_{Y,u})
    \end{equation*}
    has at least $k$ eigenvalues greater than $C^{\J}\V^{\J/2}$, for every $u \in [-\V^{-\J/2}, \V^{-\J/2}]$.

    Let $\mathcal{H}_N$ denote the space of $N \times N$ Hermitian matrices, equipped with the operator norm. For $A\in \mathcal{H}_N$, let $\lambda_1(A) \geq \ldots \geq \lambda_N(A)$ be the eigenvalues of $A$. The Weyl inequalities (\cref{lem:weylineq}) imply that each $\lambda_i$ is a $1$-Lipschitz function on $\mathcal{H}_N$. Since the map $u\mapsto A_Y - u (D_{Y,u}-O_{Y,u}) \in \mathcal{H}_N$ is smooth over $-1<u<1$, the function
    \begin{equation*}
        f_i(u) := 1 - u \lambda_i\big(A_Y - u (D_{Y,u}-O_{Y,u}) \big)
    \end{equation*}
    is Lipschitz on $[-1/2, 1/2]$, for every $1\leq i\leq N$.

    For each $i$, we have $f_i(0) = 1$. Moreover, for $1\leq i \leq k$, we have $f_i(C^{-\J}\V^{-\J/2}) < 0$ by our lower bound on the $k$ largest eigenvalues of $A_Y - u (D_{Y,u}-O_{Y,u})$. Therefore, the functions $f_1, \ldots, f_k$ each have a root in the interval $(0, C^{-\J}\V^{-\J/2})$.

    By the Ihara-Bass formula (\cref{prop:iharabass}), for $|u| < 1$ we have the identity
    \begin{equation*}
        \det(I-uM_Y) = \prod_{i=1}^N f_i(u)\cdot  g(u),
    \end{equation*}
    where $g$ is a continuous function that can be written down explicitly. Applying \cref{lem:topology}, we conclude that $\det(I-uM_Y)$ has at least $k$ roots in $(0, C^{-\J}\V^{-\J/2})$, counting multiplicities. In other words, $M_Y$ has at least $k$ eigenvalues greater than $C^{\J} \V^{\J/2}$, as desired.
\end{proof}

\subsection{Eigenvalues of localised matrices}
A key feature of our adjacency matrix $A_Y$ is that its non-zero entries lie close to the diagonal. The next lemma states that, if such a Hermitian matrix has a very small number of large eigenvalues, it is possible to delete a small number of rows and columns to obtain a matrix without any large eigenvalue.

\begin{lemma}
    \label{lem:localised}
    Let $N \geq 10 H^3$. Let $A = (a_{m,n})_{m, n\in \N}$ be a Hermitian matrix such that $a_{m,n} = 0$ whenever ${|m-n| \geq H}$.

    Let $\alpha>0$, and suppose that $A$ has $\leq N/H^3$ eigenvalues with absolute value $\geq \alpha$. Then, there is a subset $E \subset \N$ with $\abs{\N\setminus E} \ll N/H$ such that every eigenvalue of $A|_E$ has absolute value $\leq \alpha$.
\end{lemma}

\begin{proof}
    Consider a sequence of disjoint discrete intervals
    \begin{equation*}
        B_1, \ldots, B_q \subset \N,
    \end{equation*}
    each of length $|B_i| = H^2$. We define this sequence such that for each $i$, $B_i$ is separated from $B_{i+1}$ by a gap of size exactly $H$. Choosing $q$ to be maximal with these properties, we have $q \asymp N/H^2$ and
    \begin{equation}
        \label{eq:boundNminusB}
        \abs{\N \setminus \bigsqcup_{i=1}^q B_i} \ll qH + H^2 \ll N/H.
    \end{equation}

    Since $B_1, \ldots, B_q$ have pairwise distance $\geq H$, the property of $A$ in the statement implies that the principal submatrix $A' := A|_{\bigsqcup_{1\leq i\leq q} B_i}$ is block-diagonal, with blocks $(A|_{B_i})_{1\leq i \leq q}$.

    Let $\indexset$ be the set of all indices $i$ such that $A|_{B_i}$ has an eigenvalue whose absolute value is $>\alpha$. Then, defining
    \begin{equation*}
        E := \bigsqcup_{\substack{1\leq i\leq q\\ i\notin \indexset}} B_i,
    \end{equation*}
    the matrix $A|_E$ has no eigenvalue with absolute value $>\alpha$.

    By \cref{eq:boundNminusB}, we have
    \begin{equation*}
        \abs{\N \setminus E} \ll |\indexset| H^2 + N/H.
    \end{equation*}
    Observe moreover that $|\indexset| \leq N/H^3$. Indeed, $A$ has $\leq N/H^3$ eigenvalues with absolute value $\geq \alpha$, so by Cauchy's interlacing theorem (\cref{lem:interlacing}) the same holds for its principal submatrix $A'$. This implies the desired bound $\abs{\N \setminus E} \ll N/H$.
\end{proof}

We can now prove that \cref{prop:nonbacktrackingeigenbound} implies \cref{prop:OGmatrixeigenbound}. We remark that \cref{lem:localised} will not be applied directly to $A_Y$, but rather to its restriction to $\N^{\mathrm{typ}}$. This is because $A_Y$ admits large eigenvalues arising from integers outside $\N^{\mathrm{typ}}$ (see \cref{rem:highlydivisiblenumbers}), in greater number than allowed by \cref{lem:localised}.

\begin{proof}[Proof of \cref{prop:OGmatrixeigenbound}, assuming \cref{prop:nonbacktrackingeigenbound}]
    Let $Y\subset \Z$ be the set provided by \cref{prop:nonbacktrackingeigenbound}. In particular,
    \begin{equation}
        \label{eq:boundIminusY}
        \abs{\N \setminus Y} \ll H_0^{-1/3}N.
    \end{equation}
    In addition, all but $\leq N/H^3$ eigenvalues of the non-backtracking matrix $M_Y$ have absolute value at most $e^{O(\J)}\V^{\J/2}$. By \cref{lem:adjmatrixeigenbound}, the same holds for the restricted adjacency matrix $A_Y|_{\N^{\mathrm{typ}}}$.

    Applying \cref{lem:localised} to $A_Y|_{\N^{\mathrm{typ}}}$, we obtain a subset $E\subset \N$ with
    \begin{equation}
        \label{eq:boundIminusE}
        |\N\setminus E|\ll N/H
    \end{equation}
    such that all eigenvalues of $A_Y|_{\N^{\mathrm{typ}}\cap E}$ are $\leq e^{O(\J)}\V^{\J/2}$. In other words, defining $X := \N^{\mathrm{typ}}\cap E\cap Y$, all eigenvalues of $A_\Z|_{X}$ are $\leq e^{O(\J)}\V^{\J/2}$.

    By \cite[Eq.~(1.11)]{restrictedprimes}, we have
    \begin{equation}
        \label{eq:boundINP}
        \big\lvert\N \setminus \N^{\mathrm{typ}}\big\rvert = \sum_{n\in \N} \ind{\omega_{\primes}(n) > 2\J \V} \ll e^{-\J \V/3}N.
    \end{equation}
    Combining \cref{eq:boundIminusY,eq:boundIminusE,eq:boundINP}, we conclude that
    \begin{equation*}
        \abs{\N\setminus X}\ll \big(H_0^{-1/3}+H^{-1}+e^{-\J \V/3}\big)N \ll e^{-\J \V/3} N,
    \end{equation*}
    as $\J,\V \ll \log \log H \asymp \log \log H_0$ by part \cref{item:primesets3} of \cref{lem:primesets}. This proves \cref{prop:OGmatrixeigenbound}.
\end{proof}

\section{High trace bound for the non-backtracking operator}
\label{sec:trace}

The high trace method is a standard technique to control the eigenvalues of square matrices: if $M$ is Hermitian and $\Tr(M^{k})\leq C$ for some $k\in 2\mathbb{N}$, then every eigenvalue of $M$ satisfies $|\lambda_i(M)| \leq C^{1/k}$, because $\Tr(M^{k})$ is the $k$-th moment of the eigenvalues of~$M$. A similar idea can be used to control the tail of the spectrum of general square matrices.

\begin{lemma}[Schur's inequality]
    \label{lem:schurinequality}
    Let $A \in M_n(\C)$ be a matrix with eigenvalues $\lambda_1(A), \ldots, \lambda_n(A)$. Then
    \begin{equation}
        \label{eq:schurineq}
        \sum_{i=1}^n \abs{\lambda_i(A)}^2 \leq \Tr(A^*A).
    \end{equation}
\end{lemma}
\begin{proof}
    Observe that $\Tr(A^*A) = \sum_{1\leq i,j\leq n} |a_{ij}|^2$, where $(a_{ij})$ are the entries of $A$. As a result, the inequality \cref{eq:schurineq} holds when $A$ is triangular. Since every square matrix is unitarily similar to a triangular matrix, the general case follows.
\end{proof}

\subsection{Statement of the high trace bound}
In \cref{prop:hightracebound}, we state a suitable high trace bound for the non-backtracking operator $M_Y$. We then deduce the spectral tail bound (\cref{prop:nonbacktrackingeigenbound}) from it. To effectively bound the number of large eigenvalues of $M_Y$, the power $k$ in the high trace method must be sufficiently large. In our setting, this requires taking a power that is logarithmic in $H$.

\begin{notation}
    \label{not:K}
    We define $\K := \lfloor \log H \rfloor$.
\end{notation}

\begin{restatable}[High trace bound for $M_Y$]{proposition}{hightracebound}
    \label{prop:hightracebound}
    Let $N$ be an integer such that $\log N \geq (\log H)^{2.001}$.

    Then, there exists a set $Y\subset \Z$ with $\abs{\N \setminus Y} \ll H_0^{-1/3}N$, such that the non-backtracking operator $M_Y$ associated to the weighted graph $G_Y$ satisfies the high trace bound
    \begin{equation}
        \label{eq:hightracebound}
        \Tr\big[\big((M_{Y})^{\K}\big)^*(M_{Y})^{\K}\big] \leq e^{O(\J \K)} \V^{\J\K} N.
    \end{equation}
\end{restatable}

\begin{proof}[Proof of \cref{prop:nonbacktrackingeigenbound}, assuming \cref{prop:hightracebound}]
    Let $Y\subset \Z$ be the set given in \cref{prop:hightracebound}. Let $(\lambda_i)_{i\in I}$ be the (complex) eigenvalues of $M_Y$. By the high trace bound \cref{eq:hightracebound} and Schur's inequality (\cref{lem:schurinequality}), these satisfy
    \begin{equation*}
        \sum_{i\in I} |\lambda_i|^{2\K} \leq \Tr\big[\big((M_{Y})^{\K}\big)^*(M_{Y})^{\K}\big] \leq e^{O(\J \K)} \V^{\J\K} N,
    \end{equation*}
    using that the eigenvalues of $(M_Y)^{\K}$ are the $\K$-th powers of the eigenvalues of $M_Y$. By Markov's inequality, this implies that the number of eigenvalues $\lambda_i$ with $\abs{\lambda_i} \geq C^{\J}\V^{\J/2}$ is at most
    \begin{equation*}
        e^{O(\J \K)}C^{-2\J\K}N.
    \end{equation*}
    Since $\K \asymp \log H$, this number can be made smaller than $N/H^3$, provided that $C$ is chosen to be a sufficiently large absolute constant.
\end{proof}

The rest of this paper is devoted to the proof of \cref{prop:hightracebound}. Henceforth, we fix an integer $N$ such that
\begin{equation}
    \label{eq:boundforN}
    \log N \geq (\log H)^{2.001}.
\end{equation}
In particular, we have the useful bound
\begin{equation}
    \label{eq:boundHJK}
    H^{\J\K} \ll_{\eps} N^{\eps},
\end{equation}
which follows from \cref{eq:boundforN} and the estimates $\J \leq \log \log H$ and $\K \asymp \log H$.

\subsection{Expanding the trace as a sum over walks} To prove \cref{prop:hightracebound}, we work on the physical side, computing the trace directly by expanding the matrix product.

\begin{lemma}
    \label{lem:tracesimplebound}
    Let $Y\subset \Z$, and let $M_Y$ be the non-backtracking operator associated to $G_Y$. Then
    \begin{equation}
        \label{eq:traceinlemma}
        \Tr\big[\big((M_{Y})^{\K}\big)^*(M_{Y})^{\K}\big] \ll H\!\!\!\!\! \sum_{\substack{d_1, \ldots, d_{2\K}\in \pm \D\\ d_{i+1}=-d_i \iff i=\K}} \Bigg\lvert \sum_{\substack{n\in \N\\ n+b_i\in Y\,\text{for }0\leq i\leq 2\K}} \prod_{i=1}^{2\K} \prod_{p \mid d_i}\left(\ind{p\mid n + b_i}-\frac{1}{p}\right)\Bigg\rvert \, +\,  N,
    \end{equation}
    where $b_i := \sum_{1\leq k\leq i} d_k$ for $0\leq i \leq 2\K$.
\end{lemma}

\begin{proof}
    We use the formula for $\Tr[(M^{\K})^*M^{\K}]$ given in \cref{lem:traceexpansion}, which applies to the non-backtracking operator $M$ of a general weighted graph. This gives
    \begin{equation}
        \label{eq:traceexpr1}
        \Tr\big[\big((M_{Y})^{\K}\big)^*(M_{Y})^{\K}\big] = \sum_{\substack{n_{-1}, n_0, n_1, \ldots, n_{2\K}\in \N\\ (n_{-1},n_0, \ldots, n_{\K}) \in \NB{G_Y}\\ (n_{\K}, n_{\K+1}, \ldots, n_{2\K}, n_{-1})\in \NB{G_Y}\\ (n_{0}, n_{\K-1})=(n_{2\K}, n_{\K+1})}} \prod_{i=1}^{2\K} w_Y(n_{i-1}, n_i),
    \end{equation}
    where $\NB{G_Y}$ denotes the set of non-backtracking walks in $G_Y$ (see \cref{def:nbwalks}).

    We rewrite this expression in terms of $n_0$ and the increments $d_i := n_{i} - n_{i-1}$ for $0\leq i \leq 2\K$. Since we only need an upper bound, we can apply the triangle inequality and keep only the sum over $n_0$ inside the absolute value:
    \begin{equation}
        \label{eq:traceexpr2}
        \Tr\big[\big((M_{Y})^{\K}\big)^*(M_{Y})^{\K}\big] \leq \sum_{\substack{d_0, \ldots, d_{2\K}\in \pm \D\\ d_{i+1}\neq - d_i \text{ for } 0\leq i < \K\\ d_{i+1}\neq - d_i \text{ for } \K< i < 2\K\\ d_{\K+1}=-d_{\K}}} \Bigg\lvert \sum_{\substack{n_0\in \N\\ n_0-d_0\in \N}} \prod_{i=1}^{2\K} w_Y\bigg(n_0 + \sum_{j=1}^{i-1} d_j,\, n_0+\sum_{j=1}^{i} d_j\bigg) \Bigg\rvert,
    \end{equation}
    with the convention that $w_Y(m,n)=0$ if $m\notin \N$ or $n\notin \N$. In passing from \cref{eq:traceexpr1} to \cref{eq:traceexpr2}, we have dropped the conditions $n_0=n_{2\K}$ and $n_{2\K-1}\neq n_{-1}$, which can be expressed in terms of the increments $d_i$ alone (independently of $n_0$).

    If $n_0\in \N$, the requirement that $n_0-d_0\in \N$ is automatically satisfied unless $n_0$ lies within distance $O(H)$ of the boundary of $\N$. Likewise, for each $1\leq i\leq 2\K$, the implicit condition ${n_0 + \sum_{j=1}^{i}d_j\in \N}$ can only fail if $n_0$ lies within distance $O(\K H)$ of the boundary. Discarding all these conditions trivially, we incur an error of size $\ll \K H (2H)^{2\K+1}$, which is at most $O(N)$ by~\cref{eq:boundHJK}.

    Expanding the formula for $w_Y$ (see \cref{def:AYmatrix}), the trace \cref{eq:traceexpr2} is thus bounded by
    \begin{equation*}
        \sum_{\substack{d_0, \ldots, d_{2\K}\in \pm \D\\ d_{i+1}=-d_i \iff i=\K}} \Bigg\lvert \sum_{\substack{n_0\in \N}} \prod_{i=1}^{2\K} \prod_{p \mid d_i}\left(\ind{p\mid n_0 + \sum_{j=1}^{i-1} d_j}-\frac{1}{p}\right)\prod_{i=0}^{2\K} \ind{Y}\bigg(n_0 + \sum_{j=1}^{i} d_j\bigg)\Bigg\rvert + O(N).
    \end{equation*}
    Notice that the summand is independent of the variable $d_0$, which ranges over $\ll\abs{\D}\ll H$ values. The lemma follows.
\end{proof}

\section{Overview of the proof of the high trace bound}
\label{sec:heuristics}

This section serves purely as motivation and is separate from the actual proof. Our aim here is to outline the strategy for bounding the right-hand side of \cref{eq:traceinlemma}. We will also explain why the two technical assumptions that appear there -- the non-backtracking condition $d_{i+1}\neq -d_i$ (for almost all $i$) and the restriction of the vertices $n+b_i$ to a well-chosen set $Y$ -- are essential to our arguments.

Let us first examine the situation without these hypotheses. For each $(i,j)\in \intK\times \intJ$, let $\dsub{i}{j}$ be the unique prime in $\primes_j$ dividing $d_i$. Ignoring the two technical hypotheses, the right-hand side of~\cref{eq:traceinlemma} essentially reduces to
\begin{equation}
    \label{eq:caricaturaltrace}
    \sum_{d_1, \ldots, d_{2\K}\in \pm \D} \Bigg\lvert \sum_{n\in \N} \prod_{i=1}^{2\K}\prod_{j=1}^{\J} \bigg( \ind{\dsub{i}{j}\mid n + b_i}-\frac{1}{\dsub{i}{j}} \bigg) \Bigg\rvert.
\end{equation}

The basic idea is that the factors $\ind{\dsub{i}{j}\mid n + b_i}-\frac{1}{\dsub{i}{j}}$ should generate significant cancellation in the inner sum because, for a random integer $n\in \N$, the event $\dsub{i}{j}\mid n+b_i$ occurs with probability almost exactly $1/\dsub{i}{j}$. One must be careful, however: these factors are not independent whenever they involve the same prime, and these dependencies can prevent cancellation.

With this in mind, we call a prime $p\in\primes$ a \emph{single prime} for the tuple $(d_1,\dots,d_{2\K})$ if there is exactly one pair $(i,j)\in \intK\times \intJ$ such that $p = \dsub{i}{j}$. If a tuple has at least one single prime, then the inner sum in \cref{eq:caricaturaltrace} exhibits almost perfect cancellation, by the Chinese Remainder Theorem (since $N>d_1\cdots d_{2\K}$). Thus, we only need to consider the tuples for which every prime appearing in the array $(\dsub{i}{j})$ is repeated at least twice.

In this situation, we cannot hope to obtain cancellation. We may therefore use the triangle inequality and trivially bound $|\ind{\dsub{i}{j}\mid n + b_i}-\frac{1}{\dsub{i}{j}}|\leq \ind{\dsub{i}{j}\mid n + b_i} + \frac{1}{\dsub{i}{j}}$. Expanding the resulting product, we are reduced to bounding
\begin{equation}
    \label{eq:litunlitcaric}
    \sum_{\substack{d_1, \ldots, d_{2\K}\in \pm \D\\ p\mid \prod d_i \,\Rightarrow\, p^2\mid \prod d_i}} \sum_{n\in \N}  \prod_{(i,j)\in \lit} \ind{\dsub{i}{j}\mid n + b_i} \prod_{(i,j)\in \unlit} \frac{1}{\dsub{i}{j}},
\end{equation}
for every partition $\intK\times \intJ = \lit \sqcup \unlit$ of the index set into two sets $\lit$ and $\unlit$. The pairs $(i,j)\in \lit$ are called \emph{lit indices}, while those in $\unlit$ are called \emph{unlit indices}.

Heuristically, we might expect each factor $\ind{\dsub{i}{j}\mid n + b_i}$ to behave like $1/\dsub{i}{j}$ on average, in which case we would obtain a strong bound from the fact that each prime occurs at least twice. Indeed, the common contribution of all factors corresponding to a repeated prime $p$ would then be at most $O(1/p^2)$ on average, which is very small since all primes in $\primes$ are larger than $H_0$. A more precise implementation of this idea shows that the expression \cref{eq:litunlitcaric} is negligible whenever the set $\unlit$ of unlit indices is somewhat large. We may thus assume that nearly all pairs $(i,j)\in \intK\times \intJ$ lie in~$\lit$.

The `enemy scenario' is therefore when the factors $\ind{\dsub{i}{j}\mid n + b_i}$ conspire so that their product is, on average over $d_1, \ldots, d_{2\K}$ and $n$, significantly larger than the product of $1/\dsub{i}{j}$. To rule it out, we must analyse the joint behaviour of these indicator functions.

Recall that $b_i := \sum_{1\leq k\leq i} d_k$. If a prime $p$ is repeated twice at lit indices, say $p = \dsub{i_1}{j} = \dsub{i_2}{j}$ with $i_1 < i_2$, then the congruences $p\mid n + b_{i_1}$ and $p\mid n + b_{i_2}$ imply that
\begin{equation}
    \label{eq:divcaric}
    p \bigmid \sum_{i_1 < k < i_2} d_k = \sum_{i_1 < k < i_2} \sigma_k \prod_{j=1}^{\J} \dsub{k}{j},
\end{equation}
where $\sigma_k\in\{\pm 1\}$ is the sign of $d_k$. By considering all pairs of lit indices corresponding to each repeated prime, we obtain a large system of such divisibility relations. Intuitively, this system of constraints should considerably limit the number of admissible choices for the primes $\dsub{i}{j}$, and hence the number of tuples $(d_1,\dots,d_{2\K})$ contributing significantly to \cref{eq:litunlitcaric}.\footnote{Note that the techniques of Helfgott and Radziwi\l\l~\cite{HR} cannot be applied here as the right-hand side of \cref{eq:divcaric} is a polynomial expression in the primes $\dsub{i}{j}$, rather than a linear combination with small coefficients, as soon as $\J > 1$.}

The shape of the divisibility constraints is determined by the partition of the index set $\intK\times \intJ$ according to repetitions of the same prime, and the signs $\sigma_k$. Therefore, to organise our analysis, we first fix the shape, and then consider the number of possible assignments of prime values to the variables which satisfy that system of divisibility constraints.\footnote{That is, if we have a partition of $\intK\times \intJ$, we look for values $\dsub{i}{j}$ satisfying the constraints $${\dsub{i_1}{j_1} = \dsub{i_2}{j_2} \mid \sum_{i_1 < k < i_2} d_k}$$for all $(i_1,j_1),(i_2,j_2) \in \lit$ in the same part of the partition.} The number of possible shapes is very large (more than exponential in $\J\K$). We might hope that this is balanced by there being very few assignments that satisfy the associated divisibility system, and therefore that the total contribution is acceptably small.

Unfortunately, certain shapes do yield systems with many solutions, so we cannot expect a strong uniform bound that applies to every shape. Our two-step strategy is therefore as follows. First, obtain a good bound that holds for most shapes. Second, show that the remaining shapes are rare enough that their total contribution is acceptable.

It is not necessary to obtain an optimal bound for the number of solutions to divisibility systems. Roughly speaking, it suffices to show that the system determines the values of at least $K^{0.01}$ of the primes $\dsub{i}{j}$ based on the values of the remaining primes. We do so by extracting a reasonably large `triangular' subsystem of divisibility constraints which is clearly non-degenerate, via a careful combinatorial analysis.

This combinatorial argument, however, fails for certain degenerate shapes. This failure becomes critical because some of these shapes are not sufficiently rare to be handled by the second step of our strategy. Our two technical assumptions are introduced precisely to preclude these problematic configurations.

The first such issue arises from backtracking walks. When $d_{i+1}=-d_i$ for many $i$, many sums of the form $\sum_{i_1<k<i_2} d_k$ vanish, rendering the corresponding divisibility constraints \eqref{eq:divcaric} trivial. Moreover, the contribution of these backtracking walks to the trace is genuinely excessive (reflecting the existence of large eigenvalues of the adjacency operator), meaning that the simplistic expression~\cref{eq:caricaturaltrace} cannot possibly satisfy the desired bound. This explains the use of the non-backtracking operator throughout, which eliminates this class of degenerate shapes from the outset.

The second class of problematic shapes roughly corresponds to walks that involve too few distinct primes. In this case, we can only extract a small triangular subsystem, which is insufficient to compensate for the large number of combinatorial arrangements of those primes into long walks. To discard these shapes, we restrict the vertices $n+b_i$ to lie in a well-chosen set $Y$. The set $Y$ (defined precisely in \cref{sec:thesetY}) is chosen so that every such pathological walk visits at least one vertex outside of $Y$. Although working with $Y$ introduces significant technicalities,\footnote{Notably, using single primes to obtain cancellation over $Y$ rather than over $\N$ requires a highly technical analysis based on a \emph{combinatorial sieve} for composite moduli.} this device removes the second type of degeneracies that our combinatorial methods cannot handle.

With the non-backtracking condition and the restriction to $Y$ in place, a large triangular subsystem can be extracted for all shapes satisfying a generic \emph{unpredictability} assumption. The first step of the strategy is then complete: the contribution from these unpredictable shapes is negligible.

Finally, we analyse the remaining \emph{predictable} shapes, for which no large triangular subsystem can be found. We show that, while these shapes yield the main term in the trace, they are sufficiently rare that their contribution is admissible, which concludes the second step.

In summary, we use cancellation to show that walks containing (a sufficient number of) single primes contribute negligibly to the trace; this is done in \cref{sec:single}. The only remaining walks that are not easily handled are those with many repeated primes at lit indices. For these walks, no cancellation is available. Instead, we analyse the associated system of divisibility constraints. For most shapes of such systems (the \emph{unpredictable} case, treated in \cref{sec:unpredictable}), we can extract a sufficiently large triangular subsystem, which yields enough savings to render their contribution negligible. Our ability to perform this step for most shapes relies crucially on the non-backtracking condition and the restriction to the set $Y$, which remove the worst degeneracies. The residual walks (the \emph{predictable} case, treated in \cref{sec:predictable}) are sufficiently rare that their total contribution is also suitably bounded. Putting all these estimates together yields the desired high trace bound, which is essentially sharp. The remainder of the paper is devoted to turning this sketch into a rigorous argument.

\section{Definition of the set \texorpdfstring{$\Y$}{Y}}
\label{sec:thesetY}

We finally define a set $Y \subset \Z$ satisfying \cref{prop:hightracebound}: our specific choice is denoted by $\Y$. Note that this definition will not be used in earnest until \cref{sec:unpredictable}.

The role of $\Y$ is to exclude certain tuples $(d_1, \ldots, d_{2\K})$ from the expression in \cref{lem:tracesimplebound} that our methods cannot handle. The definition of $\Y$ is quite delicate: even a slight modification can disrupt key parts of the argument and render the high trace bound intractable.

The set $\Y$ is obtained from $\Z$ by removing certain sparse arithmetic progressions, called \emph{prohibited progressions}. These are defined in terms of \emph{prohibited sequences}, which are sequences of integers that satisfy a special type of divisibility relation.

To define prohibited sequences, we first fix a maximum length parameter $\L$. This parameter is fundamental to the definition of our set $\Y$, which is why we retain it in the notation. It is chosen to be a power of $\K$ slightly smaller than one.

\begin{notation}
    \label{not:L}
    We define $\L := \K^{1-10\epsone}$.
\end{notation}

\begin{definition}
    \label{def:reduced}
    Let $\ell \geq 1$. A sequence $(d_1, \ldots, d_{\ell})\in (\pm \D)^{\ell}$ is said to be \emph{reduced} if $d_{i+1} \neq -d_i$ for all $1\leq i < \ell$.
\end{definition}

\begin{definition}
    \label{def:prohibitedsequence}
    A \emph{prohibited sequence} is a reduced sequence $(d_1, \ldots, d_{\ell})$ of $\ell$ elements of $\pm \D$, for some $2<\ell\leq \L$, with the following properties:
    \begin{enumerate}
        \item (consecutiveness) for every prime $q$, the set $\{i\in \sinterval{\ell} : q\mid d_i\}$ is a discrete interval, and
        \item (prohibited pattern) there is a prime $p$ and some $1<\ell_0< \ell$ such that $p\mid d_1$, $p\nmid d_{\ell}$ and
              \begin{equation}
                  \label{eq:divisibilityindefprohib}
                  p\bigmid \sum_{\ell_0\leq i \leq \ell} d_i.
              \end{equation}
    \end{enumerate}
    A prohibited sequence $(d_1, \ldots, d_{\ell})$ is \emph{primitive} if there is no consecutive\footnote{By `consecutive subsequence of $(d_1, \ldots, d_{\ell})$', we mean a sequence of the form $(d_{k_1}, d_{k_1+1},\ldots, d_{k_2})$ for some ${1\leq k_1<k_2\leq \ell}$.} subsequence of $(d_1, \ldots, d_{\ell})$ or of $(d_{\ell}, \ldots, d_1)$, of length $<\ell$, which is also prohibited.
\end{definition}

A key difference with \cite{HR} is that, in their situation, the authors can restrict themselves to the case $\ell_0 = 1$. This is not possible here, and leads to additional complications in the proof of \cref{lem:existencerank} (due to the fact that the constraint \cref{eq:divisibilityindefprohib} only involves a subset of the prime factors of the $d_i$). We may now define the set $\Y$.

\begin{definition}[$\prohibprog$, $\Y$]
    \label{def:prohibitedprogression}
    The \emph{prohibited progression} associated to a primitive prohibited sequence $(d_1, \ldots, d_{\ell})$ is the set of all integers $n\in \Z$ such that
    \begin{equation*}
        d_1\mid n, \quad d_2\mid n+d_1, \quad \ldots \quad d_{\ell}\mid n+d_1+\cdots+d_{\ell-1}.
    \end{equation*}
    It is an arithmetic progression of square-free modulus $\mathrm{lcm}(d_1,\ldots, d_{\ell})$ (the consecutiveness property in \cref{def:prohibitedsequence} ensures that it is non-empty).

    Let $\prohibprog$ be the set of all prohibited progressions associated with some primitive prohibited sequence. We define
    \begin{equation*}
        \Y := \mathbb{Z} \setminus \cup \prohibprog,
    \end{equation*}
    the set of all integers that do not belong to any prohibited progression.
\end{definition}

To apply \cref{prop:hightracebound} with $Y = \Y$, we need to show that $\Y$ covers almost all of $\N$; specifically, that $\abs{\N\setminus \Y} \ll H_0^{-1/3}N$. Although this is not difficult, we postpone the proof to \cref{sec:combinatorial} (see \cref{lem:Ysmall}), where we establish a broader class of results concerning prohibited sequences and progressions.

\section{Single and repeated primes}
\label{sec:tracecomputations}

In this section, we introduce the notation that we will use to study the high trace of the non-backtracking matrix $M_{\Y}$ and make some simple observations.

\subsection{Single, lit and unlit indices}
The purpose of the following definitions is to characterise the tuples $\d = (d_1, \ldots, d_{2\K})$ that contribute to the right-hand side of \cref{eq:traceinlemma}. We will show in \cref{lem:threeindicestypes} that every such tuple belongs to one of the sets $\DD_{\s,\lit,r}$ introduced in \cref{def:D}.

\begin{definition}[$\dsub{i}{j}$, $\d_I$, $\pall$]
    \label{def:dij}
    Let $\d = (d_i)_{i\in \intK}$ be a vector with coordinates $d_i \in \pm \D$.

    We write $\dsub{i}{j}$ for the unique prime in $\primes_j$ that divides $d_i$. Thus, $\abs{d_i} = \prod_{j\in \intJ} \dsub{i}{j}$.

    For any subset $I\subset \intK \times \intJ$, we set
    \begin{equation*}
        \d_I := \prod_{(i,j)\in I} \dsub{i}{j}.
    \end{equation*}
    In the special case $I = \intK \times \intJ$, we will write $\pall$ instead of $\d_{\intK \times \intJ}$ to shorten the notation.
\end{definition}


\begin{definition}[Single indices]
    \label{def:single}
    Let $\d = (d_i)_{i\in \intK}$ be a vector with coordinates $d_i \in \pm \D$.

    We say that a pair $(i,j)\in \intK \times \intJ$ is a \emph{single index} if $\dsub{i}{j}^2\nmid \pall$, i.e.~the prime $\dsub{i}{j}$ does not appear at any pair other than $(i,j)$. In that case, we also say that $\dsub{i}{j}$ is a \emph{single prime} of $\d$.
\end{definition}

\begin{definition}
    \label{def:admissible}
    Let $R \geq 1$ and let $\lit \subset \sinterval{R} \times \intJ$.

    A vector $\d \in (\pm \D)^{R}$ is called \emph{$\lit$-admissible} if, for all $k_1<k_2$ such that $\interval{k_1}{k_2}\times \intJ \subset \lit$, neither $(d_{k_1}, d_{k_1+1}, \ldots, d_{k_2})$ nor $(d_{k_2}, d_{k_2-1}, \ldots, d_{k_1})$ is a prohibited sequence (see \cref{def:prohibitedsequence}).\footnote{Recall that a prohibited sequence has length $\leq L$ by definition, so this condition is empty when $k_2-k_1+1 > L$.}
\end{definition}

We now define the sets $\DD_{\s,\lit,r}$. The tuples $\d = (d_1, \ldots, d_{2\K})$ arising from the trace expansion in \cref{lem:tracesimplebound} are non-reduced at exactly one position, since $d_{\K+1}=-d_{\K}$ while $d_{i+1}\neq -d_i$ for $i\neq \K$. This structure is a consequence of multiplying the $\K$-th power of the non-backtracking matrix by its conjugate transpose, a necessary step for controlling the eigenvalues. While this isolated backtracking requires technical adjustments (in parts \cref{item:Dprop3} and \cref{item:Dprop4} of \cref{def:D}), the resulting complexity is largely notational rather than conceptual.

\begin{definition}[$\DD_{\s,\lit,r}$]
    \label{def:D}
    Let $\s$ and $\lit$ be disjoint subsets of $\intK\times \intJ$ and let $1\leq r\leq \K$.

    We define $\DD_{\s,\lit,r}$ to be the set of all vectors $\d = (d_i)_{i\in \intK}$ with coordinates $d_i \in \pm \D$ having the following properties.
    \begin{enumerate}
        \item \label{item:Dprop1} The set $\s$ is precisely the set of single indices for $\d$.
        \item \label{item:Dprop2} Whenever two indices $(i,j), (i',j)\in \lit$ are such that $\dsub{i}{j} = \dsub{i'}{j}$, we have
              \begin{equation*}
                  \dsub{i}{j} \bigmid \sum_{i<k<i'}d_k.
              \end{equation*}
        \item \label{item:Dprop3} The integer $r$ satisfies $(d_{\K+1}, d_{\K+2}, \ldots d_{\K+r}) = -(d_{\K}, d_{\K-1}, \ldots, d_{\K-r+1})$.
        \item \label{item:Dprop4} Let
              \begin{equation*}
                  \begin{cases}
                      I^{(1)} := \interval{1}{\K}     \\
                      I^{(2)} := \interval{\K+1}{2\K} \\
                      I^{(3)} := \interval{1}{\K-r}\cup \interval{\K+r+1}{2\K},
                  \end{cases}
              \end{equation*}
              and let $\iota^{(k)} : I^{(k)} \to \{1, \ldots, |I^{(k)}|\}$ be the unique order-preserving bijection. Then, for $k=1,2,3$, the vector
              \begin{equation*}
                  \d^{(k)} := (d_i)_{i\in I^{(k)}}
              \end{equation*}
              is reduced\footnote{See \cref{def:reduced} for the definition of reduced tuples.} and $\lit^{(k)}$-admissible, where $\lit^{(k)} := \{(\iota^{(k)}(i), j) \mid (i,j)\in \lit\}$.
    \end{enumerate}
    Note that if $r=\K$, then $I^{(3)} = \emptyset$ and the conditions on $\d^{(3)}$ can be ignored.
\end{definition}

Our goal is to bound the expression in \cref{lem:tracesimplebound}, which includes a double sum over ${\d \in (\pm \D)^{2\K}}$ and $n\in\N$. \Cref{lem:threeindicestypes} below reduces this problem to analysing the contribution of the pairs $(\d, n)$ such that
\begin{itemize}
    \item $\d\in \DD_{\s,\lit,r}$,
    \item $\dsub{i}{j} \mid n + b_i$ for $(i,j)\in \lit$, and
    \item $\dsub{i}{j} \nmid n + b_i$ for $(i,j)\in \unlit$,
\end{itemize}
where $\s$, $\lit$, and $\unlit$ are sets such that $\s \sqcup \lit \sqcup \unlit = \intK \times \intJ$, and $1\leq r\leq \K$. Thus, $\s$ is the set of single indices for $\d$, and the complement $(\intK \times \intJ)\setminus \s$ is divided into two sets according to whether the corresponding divisibility condition holds or not. Following the terminology of Helfgott and Radziwi\l\l~\cite{HR}, we call $\lit$ the set of \emph{lit indices} and $\unlit$ the set of \emph{unlit indices}.

The contribution of these pairs $(\d, n)$ is captured by the quantity $E_{\s,\lit,\unlit,r}$ defined below.

\begin{definition}[$E_{\s,\lit,\unlit,r}$]
    \label{def:Esum}
    Let $\s$, $\lit$, and $\unlit$ be sets such that $\s \sqcup \lit \sqcup \unlit = \intK\times \intJ$ and let $1\leq r\leq \K$.

    We write $E_{\s,\lit,\unlit, r}$ for the quantity
    \begin{equation}
        \label{eq:defineinnersum}
        E_{\s,\lit,\unlit, r} := \sum_{\substack{\d\in \DD_{\s,\lit,r}}} \Bigg\lvert \sum_{\substack{n\in \N\\ n+b_i\in \Y\,\text{for }0\leq i\leq 2\K}} \ind{\substack{\dsub{i}{j}\mid n+\b_i \, \forall (i,j)\in \lit \\ \dsub{i}{j}\nmid n+\b_i \, \forall (i,j)\in \unlit}} \prod_{(i,j)\in \intK\times \intJ}\left(\ind{\dsub{i}{j}\mid n + b_i}-\frac{1}{\dsub{i}{j}}\right)\Bigg\rvert,
    \end{equation}
    where $b_i := \sum_{1\leq k\leq i} d_k$ for $0\leq i \leq 2\K$.
\end{definition}

\begin{lemma}
    \label{lem:threeindicestypes}
    We have
    \begin{equation*}
        \Tr\big[\big((M_{Y})^{\K}\big)^*(M_{Y})^{\K}\big] \ll e^{O(\J\K)}\max_{\substack{\s \sqcup \lit \sqcup \unlit = \intK\times \intJ\\ 1\leq r\leq \K}} E_{\s,\lit,\unlit, r}\  + \ N.
    \end{equation*}
\end{lemma}

\begin{proof}
    For any $\d \in (\pm \D)^{2\K}$ with $d_{\K+1}=-d_\K$, let $r(\d)$ be the largest integer $1\leq r\leq \K$ such that
    \begin{equation*}
        (d_{\K+1}, d_{\K+2}, \ldots d_{\K+r}) = -(d_{\K}, d_{\K-1}, \ldots, d_{\K-r+1}).
    \end{equation*}
    By \cref{lem:tracesimplebound} and the triangle inequality, we have
    \begin{equation}
        \label{eq:tracesimpleboundrestated}
        \Tr\big[\big((M_{Y})^{\K}\big)^*(M_{Y})^{\K}\big] \ll H\sum_{\substack{\s\sqcup \lit\sqcup \unlit = \intK\times\intJ\\ 1\leq r\leq \K}} \sum_{\substack{\d\in (\pm \D)^{2\K}\\ d_{i+1}=-d_i \iff i=\K\\ \dsub{i}{j}^2\nmid \pall \iff (i,j)\in \s\\ r(\d)=r }} \abs{S_{\s,\lit,\unlit}(\d)}\  +\  N,
    \end{equation}
    where $S_{\s,\lit,\unlit}(\d)$ denotes the inner sum on the right-hand side of \cref{eq:defineinnersum}, i.e.
    \begin{equation*}
        S_{\s,\lit,\unlit}(\d) := \sum_{\substack{n\in \N\\ n+b_i\in \Y\,\text{for }0\leq i\leq 2\K}} \ind{\substack{\dsub{i}{j}\mid n+\b_i \, \forall (i,j)\in \lit \\ \dsub{i}{j}\nmid n+\b_i \, \forall (i,j)\in \unlit}} \prod_{(i,j)\in \intK\times \intJ}\left(\ind{\dsub{i}{j}\mid n + b_i}-\frac{1}{\dsub{i}{j}}\right).
    \end{equation*}
    Up to a multiplicative factor $e^{O(\J\K)}$, the sum over $\s$, $\lit$, $\unlit$ and $r$ in \cref{eq:tracesimpleboundrestated} can be replaced by a maximum. The factor $H$ can be absorbed in this new $e^{O(\J\K)}$ term as $\K \asymp \log H$.

    Hence, to prove \cref{lem:threeindicestypes}, it suffices to show any vector $\d \in (\pm \D)^{2\K}$ satisfying the properties
    \begin{enumerate}[label=(\roman*), ref=\roman*]
        \item \label{item:threeindicestypes1} $d_{i+1}=-d_i \iff i=\K$;
        \item \label{item:threeindicestypes2} $\dsub{i}{j}^2\nmid \pall \iff (i,j)\in \s$;
        \item \label{item:threeindicestypes3} $r(\d)=r$; and
        \item \label{item:threeindicestypes4} $S_{\s,\lit,\unlit}(\d) \neq 0$
    \end{enumerate}
    is contained in $\DD_{\s,\lit,r}$.

    Let $\d$ be such a vector. We need to show that it satisfies the four properties of \cref{def:D}. Some are immediate: property~\cref{item:Dprop1} of \cref{def:D} is equivalent to \cref{item:threeindicestypes2} and, by definition of $r(\d)$, property~\cref{item:Dprop3} of \cref{def:D} follows from \cref{item:threeindicestypes3}. In addition, \cref{item:threeindicestypes3} implies that
    \begin{equation}
        \label{eq:d3glue}
        d_{\K+r+1} \neq -d_{\K-r}
    \end{equation}
    in the case $r<\K$. From \cref{eq:d3glue} and \cref{item:threeindicestypes1}, we deduce that the vectors $\d^{(1)}$, $\d^{(2)}$ and $\d^{(3)}$ from \cref{def:D} are reduced.

    It only remains to prove property \cref{item:Dprop2} of \cref{def:D} and the $\lit^{(k)}$-admissibility of $\d^{(k)}$ for ${k=1,2,3}$. This is where assumption \cref{item:threeindicestypes4} is used.

    Let $(i,j), (i',j)\in \lit$ be such that $\dsub{i}{j} = \dsub{i'}{j}$. By \cref{item:threeindicestypes4}, there is at least one non-zero term in the sum defining $S_{\s,\lit,\unlit}(\d)$, which means that there is at least one $n\in \N$ such that $\dsub{i}{j}\mid n + b_i$ and $\dsub{i'}{j} \mid n + b_{i'}$. Since $\dsub{i}{j} = \dsub{i'}{j}$, we can subtract these two divisibility relations to get $\dsub{i}{j} \mid b_{i'} - b_i$. Recalling that $b_l := \sum_{1\leq k\leq l} d_k$, we get $\dsub{i}{j} \mid \sum_{i<k\leq i'} d_k$, which implies property \cref{item:Dprop2} of \cref{def:D} as $\dsub{i}{j} = \dsub{i'}{j}\mid d_{i'}$.

    Finally, suppose by contradiction that $\d^{(k)}$ is not $\lit^{(k)}$-admissible, for some $k\in \{1,2,3\}$. Let ${\tau : \{1, \ldots, |I^{(k)}|\} \to I^{(k)}}$ be the inverse of $\iota^{(k)}$. Then, there are $a, \ell\in \mathbb{N}$ and $\sigma \in \{\pm 1\}$ such that
    \begin{itemize}
        \item $\{\tau(a), \tau(a+\sigma), \ldots, \tau(a+\ell\sigma)\}\times \intJ \subset \lit$,\footnote{In particular, $\{a, a+\sigma, \ldots, a+\ell\sigma\} \subset \{1, \ldots, |I^{(k)}|\}$.} and
        \item $(d_{\tau(a)}, d_{\tau(a+\sigma)}, \ldots, d_{\tau(a+\ell \sigma)})$ is a prohibited sequence.
    \end{itemize}
    By passing to a suitable subsequence, we can assume without loss of generality that this prohibited sequence is primitive. By \cref{item:threeindicestypes4}, there exists $n\in \N$ such that the term in $S_{\s,\lit,\unlit}(\d)$ corresponding to $n$ is non-zero. In particular, $n+b_i \in \Y$ for all $0\leq i\leq 2\K$. Furthermore, we have $\dsub{i}{j} \mid n+b_i$ for all $(i,j)\in \lit$, and thus
    \begin{equation}
        \label{eq:taudivcond}
        d_{\tau(a)} \mid n+b_{\tau(a)}, \quad d_{\tau(a+\sigma)} \mid n+b_{\tau(a+\sigma)}, \quad \ldots, \quad d_{\tau(a+\ell\sigma)} \mid n+b_{\tau(a+\ell\sigma)}.
    \end{equation}
    For any $1<i\leq |I^{(k)}|$, we have
    \begin{equation}
        \label{eq:bidiff}
        b_{\tau(i)} - b_{\tau(i-1)} = d_{\tau(i)}.
    \end{equation}
    While this is obvious for $k\in \{1,2\}$, it is also true for $k=3$ because $b_{\K-r}=b_{\K+r}$ by property~\cref{item:Dprop3} of \cref{def:D}, which we have already shown.

    In the case $\sigma = 1$, using \cref{eq:bidiff}, we can rewrite \cref{eq:taudivcond} as
    \begin{equation*}
        \begin{cases}
            d_{\tau(a)} \mid n+b_{\tau(a-1)}                             \\
            d_{\tau(a+1)} \mid n+b_{\tau(a-1)}+d_{\tau(a)}               \\
            d_{\tau(a+2)} \mid n+b_{\tau(a-1)}+d_{\tau(a)}+d_{\tau(a+1)} \\
            \vdots                                                       \\
            d_{\tau(a+\ell)} \mid n+b_{\tau(a-1)}+d_{\tau(a)}+d_{\tau(a+1)}+\ldots +d_{\tau(a+\ell-1)}.
        \end{cases}
    \end{equation*}
    Since $(d_{\tau(a)}, d_{\tau(a+1)}, \ldots, d_{\tau(a+\ell)})$ is a primitive prohibited sequence, this means that $n+b_{\tau(a-1)}$ belongs to a prohibited progression (see \cref{def:prohibitedprogression}), contradicting the fact that ${n+b_{\tau(a-1)}\in \Y}$.

    Likewise, in the case $\sigma = -1$, we can use \cref{eq:bidiff} to rewrite \cref{eq:taudivcond} as
    \begin{equation*}
        \begin{cases}
            d_{\tau(a)} \mid n+b_{\tau(a)}                             \\
            d_{\tau(a-1)} \mid n+b_{\tau(a)}-d_{\tau(a)}               \\
            d_{\tau(a-2)} \mid n+b_{\tau(a)}-d_{\tau(a)}-d_{\tau(a-1)} \\
            \vdots                                                     \\
            d_{\tau(a-\ell)} \mid n+b_{\tau(a-1)}-d_{\tau(a)}-d_{\tau(a+1)}-\ldots -d_{\tau(a+\ell-1)}.
        \end{cases}
    \end{equation*}
    Since $(d_{\tau(a)}, d_{\tau(a-1)}, \ldots, d_{\tau(a-\ell)})$ is a primitive prohibited sequence, so is its negative. Similarly to the previous case, these divisibilities imply that $n+b_{\tau(a)}$ belongs to a prohibited progression, contradicting the fact that ${n+b_{\tau(a)}\in \Y}$. This concludes the proof that $\d^{(k)}$ is $\lit^{(k)}$-admissible.
\end{proof}

\subsection{Walks with many unlit indices}
\label{sec:unlit}
The next lemma shows that the expression $E_{\s,\lit,\unlit, r}$ from \cref{def:Esum} is negligible once the number of unlit indices exceeds a modest threshold.
\begin{lemma}
    \label{lem:unlit}
    Let $\s$, $\lit$, $\unlit$ be sets such that $\intK\times\intJ = \s\sqcup\lit\sqcup\unlit$ and let $1\leq r\leq \K$. Suppose that $|\unlit|\geq \K^{2\epsone}$. Then
    \begin{equation*}
        E_{\s,\lit,\unlit, r} \ll N.
    \end{equation*}
\end{lemma}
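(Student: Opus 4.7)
The plan is to exploit the factor $1/\pcomp{i}{j}$ that we gain from each unlit index to overcome the combinatorial cost of summing over $\d\in \Ds$. First I would open up $w_{\d}(\n)$ on the event $E$ appearing inside the indicator in $\Sigma_{\s,\lit,\unlit}$: on $E$, each factor $\ind{\pcomp{i}{j}\mid\n+\sommespart_i}-\tfrac{1}{\pcomp{i}{j}}$ equals $1-\tfrac{1}{\pcomp{i}{j}}$ for $(i,j)\in\lit$ and equals $-\tfrac{1}{\pcomp{i}{j}}$ for $(i,j)\in\unlit$. Using $W\le 1$, $\ind{\Y}\le 1$ and $|1-\tfrac{1}{p}|\le 1$, one obtains the pointwise estimate
\[
\abs{w_{\d}(\n)}\,\ind{E} \;\le\; \Bigl(\prod_{(i,j)\in\unlit}\tfrac{1}{\pcomp{i}{j}}\Bigr)\,\prod_{(i,j)\in\s}\Bigl|\ind{\pcomp{i}{j}\mid\n+\sommespart_i}-\tfrac{1}{\pcomp{i}{j}}\Bigr|\,\ind{E},
\]
in which the first product is deterministic.

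By definition of a single index, the primes $\{\pcomp{i}{j}:(i,j)\in\s\}$ are pairwise distinct and disjoint from $Q:=\{\pcomp{i}{j}:(i,j)\in\lit\cup\unlit\}$, so independence of the residues of $\n$ modulo distinct primes, combined with $\E\abs{\ind{p\mid\n+c}-1/p}=\tfrac{2}{p}(1-\tfrac{1}{p})\le \tfrac{2}{p}$, yields
\[
\E\bigl[\abs{w_{\d}(\n)}\,\ind{E}\bigr] \;\le\; \prod_{(i,j)\in\unlit}\tfrac{1}{\pcomp{i}{j}}\,\prod_{(i,j)\in\s}\tfrac{2}{\pcomp{i}{j}}\,\prod_{q\in Q}\Pr(E_q),
\]
where $E_q$ is the restriction of $E$ to residues modulo $q$. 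A one-line residue count gives $\Pr(E_q)\le 1/q$ whenever $L_q:=\{(i,j)\in\lit:\pcomp{i}{j}=q\}$ is nonempty, and $\Pr(E_q)\le 1$ otherwise.

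Next I would sum over $\d\in\Ds$ by parameterizing each such $\d$ by a sign vector $\somesigns\in\{\pm1\}^{\repK}$, a partition $\Pi$ of $\lit\cup\unlit$ into classes of size $\ge 2$ with each class contained in some $\newK\times\{j_\alpha\}$, and a choice of prime for each class and each single index. For a class $\alpha$ with $u_\alpha := |\alpha\cap\unlit|$, summing over $q\in\primes_{j_\alpha}$ gives $\sum_q q^{-v_\alpha}\le \curlyL\, H_0^{-(v_\alpha-1)}$, where $v_\alpha=1$ for an all-lit class, $v_\alpha=u_\alpha+1$ for a mixed class (using $\Pr(E_q)\le 1/q$), and $v_\alpha=u_\alpha\ge 2$ for an all-unlit class. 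The crucial combinatorial point is that $\sum_\alpha(v_\alpha-1)\ge \tfrac12|\unlit|$, since mixed classes contribute $u_\alpha$ and pure-unlit classes contribute $u_\alpha-1\ge u_\alpha/2$. Each single index contributes $\sum_{p\in\primes_j}\tfrac{2}{p}\le 2\curlyL$; bounding the number of admissible partitions crudely by $(\repK\repR)^{\repK\repR}$, we obtain
\[
\abs{\Sigma_{\s,\lit,\unlit}} \;\ll\; 2^{\repK}(\repK\repR)^{\repK\repR}\,\curlyL^{O(\repK\repR)}\,H_0^{-|\unlit|/2}.
\]

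To finish, I would compare sizes: the loss is at most $\exp\bigl(O(\repK\repR\log(\repK\repR))\bigr)\le \exp\bigl(O(\eps_1^2\log H(\log\log H)^2)\bigr)$, whereas the saving is $H_0^{|\unlit|/2}\ge \exp\bigl(\tfrac12(\log H)^{1-\eps_1}\repK^{\cstU}\bigr)\ge \exp\bigl(\tfrac13(\log H)^{1+\eps_1}\bigr)$ using $|\unlit|\ge\repK^{\cstU}$, $\cstU=2\eps_1$, and $\repK\asymp\log H$. Since $(\log H)^{\eps_1}$ dominates $(\log\log H)^2$ for $\eps_1$ fixed and $H$ large, this yields $|\Sigma_{\s,\lit,\unlit}|\ll 1$. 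The hard part will be the case split on class type needed to extract a saving of $H_0^{\Omega(|\unlit|)}$ rather than a single $H_0^{O(1)}$; the tight case is the pure-unlit one, where the minimum class size $u_\alpha\ge 2$ is exactly what compensates for the factor of $H_0$ we no longer get from $\Pr(E_q)$.
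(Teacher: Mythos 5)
Your proposal is correct and takes essentially the same approach as the paper: a trivial pointwise bound on the expectation, followed by extracting a saving of $H_0^{-|\unlit|/2}$ from the unlit indices (using that any prime appearing only at unlit indices must appear at least twice), and comparing this against a crude partition count over $\d\in\Ds$. The paper streamlines the bookkeeping by first establishing the single inequality $\ind{p\mid \fprod{\d; \s \sqcup \lit}} + \tfrac12 \abs{\{(i,j)\in \unlit : \pcomp{i}{j} = p\}} \geq \ind{p\mid \pall}$, which subsumes your class-by-class accounting, and then quoting \cref{lem:trivialbound} for the sum over $\d$ rather than re-deriving the sign/partition/prime parameterization inline.
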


To prove \cref{lem:unlit}, we need the following basic estimate.

\begin{lemma}
    \label{lem:trivialbound}
    We have
    \begin{equation*}
        \sum_{\d\in (\pm \D)^{2\K}}  \prod_{p\mid \pall}\frac{1}{p} \ll\K^{O(\J\K)}.
    \end{equation*}
\end{lemma}
\begin{proof}
    Any $\d\in (\pm \D)^{2\K}$ induces a partition of $\intK\times\intJ$, where $(i,j)$ and $(i',j')$ are in the same class if and only if $\dsub{i}{j} = \dsub{i'}{j'}$. Since the sets $\primes_j$ are disjoint, this partition has the property that every class $\alpha$ is contained in $\intK\times \{j_{\alpha}\}$ for some $j_{\alpha}\in \intJ$.

    Observe that $\d$ can be fully reconstructed from a sequence of $2\K$ signs (the signs of the $d_i$), a partition of $\intK\times\intJ$ with the above property, and the assignment of a prime $p_{\alpha}\in \primes_{\! j_{\alpha}}$ to every class $\alpha$ of this partition. Recall that ${\sum_{p\in \primes_j}1/p = \V_j\leq \V}$ for every $j$. Summing over all sequences of signs $\sigma$, suitable partitions $\Pi$ of $\intK\times\intJ$ and primes $(p_{\alpha})_{\alpha\in \Pi}$, we get
    \begin{equation*}
        \sum_{\d\in (\pm \D)^{2\K}} \prod_{p\mid \pall} \frac{1}{p} \leq  \sum_{\sigma\in \{\pm1\}^{2\K}} \sum_{\Pi} \prod_{\alpha\in \Pi} \,\sum_{p_{\alpha}\in \primes_{\!j_{\alpha}}} \frac{1}{p_{\alpha}} \leq 2^{2\K} (2\J\K)^{2\J\K } \V^{2\J\K} \leq \K^{O(\J\K)},
    \end{equation*}
    where we used that the number of partitions of a finite set $S$ is $\leq |S|^{|S|}$ and that $\V, \J \ll \log K$.
\end{proof}

\begin{proof}[Proof of \cref{lem:unlit}]
    By the triangle inequality,
    \begin{align*}
        E_{\s,\lit,\unlit, r} & \leq \sum_{\s_1\sqcup \s_2 = \s}\, \sum_{\substack{\d\in \DD_{\s,\lit,r}}}\, \sum_{\substack{n\in \N}}
        \bigg(\prod_{(i,j)\in \lit \sqcup \s_1} \ind{\dsub{i}{j}\mid n+\b_i}\bigg)\bigg( \prod_{(i,j)\in \unlit\sqcup \s_2}\frac{1}{\dsub{i}{j}}\bigg) \\ &\ll N\sum_{\s_1\sqcup \s_2 = \s}\, \sum_{\substack{\d\in \DD_{\s,\lit,r}}} \bigg( \prod_{p\mid \d_{\lit\sqcup \s_1}} \frac{1}{p}\bigg)\bigg( \prod_{(i,j)\in \unlit\sqcup \s_2}\frac{1}{\dsub{i}{j}}\bigg),
    \end{align*}
    using that the arithmetic progression defined by the divisibility conditions on $n$ has modulus $\prod_{p\mid \d_{\lit \sqcup \s_1}} p \leq H^{\J\K}$, which is $N^{o(1)}$ by \cref{eq:boundHJK}.

    We now examine the number of occurrences of the primes $p\mid \pall$ in the two products
    \begin{equation*}
        P_1 := \prod_{p\mid \d_{\lit\sqcup \s_1}} \frac{1}{p} \quad \text{and} \quad P_2 := \prod_{(i,j)\in \unlit\sqcup \s_2}\frac{1}{\dsub{i}{j}}.
    \end{equation*}
    Every prime $p\mid \d_{\lit}$ appears in $P_1$ exactly once, and does not appear in $P_2$. Every prime $p\mid \d_{\s}$ appears exactly once, either in $P_1$ or in $P_2$. Finally, every prime $p\mid \d_{\unlit}$ appears in $P_2$ at least twice, since $\dsub{i}{j} = p$ for at least two indices $(i,j)\in \unlit$ (otherwise $p$ would be a single prime). Therefore,
    \begin{equation*}
        P_1P_2 = \prod_{p\mid \pall} \frac{1}{p}\cdot \prod_{p\mid \d_{\unlit}}\frac{1}{p^{\nu_p(\pall)-1}}\leq \prod_{p\mid \pall} \frac{1}{p}\cdot \prod_{p\mid \d_{\unlit}}\frac{1}{H_0^{\nu_p(\pall)-1}} \leq H_0^{-\abs{\unlit}/2}\prod_{p\mid \pall} \frac{1}{p}.
    \end{equation*}
    Hence, by \cref{lem:trivialbound},
    \begin{equation*}
        E_{\s,\lit,\unlit, r} \ll N  2^{\abs{\s}} H_0^{-\abs{\unlit}/2} \sum_{\d\in (\pm \D)^{2\K}} \prod_{p\mid \pall} \frac{1}{p}\ll N \K^{O(\J\K)} H_0^{-\abs{\unlit}/2}.
    \end{equation*}
    Recall that ${\log H_0 \gg\K^{1-\epsone}}$, while ${|\unlit|\geq\K^{2\epsone}}$ by assumption. Thus
    \begin{equation*}
        \K^{O(\J\K)} H_0^{-\abs{\unlit}} \ll \K^{O(\K\log \K)} e^{-K^{1+\epsone}} \ll 1,
    \end{equation*}
    and the bound $E_{\s,\lit,\unlit, r} \ll N$ follows.
\end{proof}

\section{Cancellation from single primes}
\label{sec:single}

In \cref{lem:unlit}, we proved that $E_{\s,\lit,\unlit, r}$ is small whenever the number of unlit indices is moderately large. In this section, we will show that the same is true when the number of single indices is large.

\begin{proposition}
    \label{prop:cancellationoverY}
    Let $\s$, $\lit$, $\unlit$ be sets such that $\intK\times\intJ = \s\sqcup\lit\sqcup\unlit$ and let $1\leq r\leq \K$.

    Suppose that $|\s|\geq \K^{1-\epsone}$ and $|\unlit| \leq\K^{2\epsone}$. Then
    \begin{equation*}
        E_{\s,\lit,\unlit, r} \ll e^{O(\J\K)} N.
    \end{equation*}
\end{proposition}

To prove \cref{prop:cancellationoverY}, we need to exhibit cancellation in a sum ranging over the intersection of $\N$ with shifted copies of $\Y$. If we did not have to deal with $\Y$, this would be a simple task. However, the presence of $\Y$ requires us to develop a more sophisticated approach. We will use a combinatorial sieve to approximate the indicator of $\Y$ by a linear combination of arithmetic progressions, which are easier to handle.

\subsection{Cancellation over arithmetic progressions}

We first prove cancellation over arithmetic progressions in the presence of many single indices.
\begin{lemma}
    \label{lem:APcancellation}
    Let $\s$, $\lit$, $\unlit$ be sets such that $\intK\times\intJ = \s\sqcup\lit\sqcup\unlit$. Let $1\leq r\leq \K$ and $\d\in \DD_{\s,\lit,r}$.

    Let $R\subset \Z$ be an arithmetic progression with square-free modulus $q_R$ not divisible by $\d_{\s}$. Then

    \begin{equation}
        \label{eq:sumoverAP}
        \sum_{\substack{n\in \N\cap R}} \ind{\substack{\dsub{i}{j}\mid n+\b_i \, \forall (i,j)\in \lit \\ \dsub{i}{j}\nmid n+\b_i \, \forall (i,j)\in \unlit}} \prod_{(i,j)\in \intK\times \intJ}\left(\ind{\dsub{i}{j}\mid n + b_i}-\frac{1}{\dsub{i}{j}}\right) \ll H^{2\K}q_R.
    \end{equation}
\end{lemma}
\begin{proof}
    Since $\d_{\s}$ does not divide $q_R$, there exists an index $(i_0, j_0)\in \s$ such that $p_0 := \dsub{i_0}{j_0}\nmid q_R$.

    Introducing the variable
    \begin{equation*}
        Q := \prod_{\substack{p\mid \pall q_R\\ p\neq p_0}} p,
    \end{equation*}
    we may express the sum \cref{eq:sumoverAP} as
    \begin{equation*}
        \sum_{a\spmod Q}  c(a) \sum_{\substack{n\in \N\\ n\equiv a \spmod{Q}}} \left(\ind{p_0\mid n + b_{i_0}}-\frac{1}{p_0}\right)
    \end{equation*}
    for some $1$-bounded coefficients $c(a)$. Since $p_0\nmid Q$, the inner sum exhibits near-perfect cancellation:
    \begin{equation*}
        \sum_{\substack{n\in \N\\ n\equiv a \spmod{Q}}} \left(\ind{p_0\mid n + b_{i_0}}-\frac{1}{p_0}\right) = \frac{N}{p_0Q} + O(1) - \frac{1}{p_0} \left(\frac{N}{Q}+O(1)\right) \ll 1.
    \end{equation*}
    Thus, the left-hand side of \cref{eq:sumoverAP} is bounded by $\ll Q \leq \pall q_R \leq H^{2\K}q_R$, as desired.
\end{proof}

\subsection{Cancellation over \texorpdfstring{$\Y$}{Y}}
Recall that $\prohibprog$ is the set of all prohibited progressions, and $\Y$ is the complement of the union of these prohibited progressions. We need to use a suitable version of the inclusion-exclusion principle to express $\ind{\Y}$ as a linear combination of indicators of intersections of prohibited progressions.

The exact inclusion-exclusion formula
\begin{equation}
    \label{eq:inclexcl}
    \ind{n\in \Y} = \ind{n\not\in P\ \forall P\in \prohibprog} = 1 - \sum_{P_1\in \prohibprog} \ind{n\in P_1} + \sum_{\substack{P_1, P_2\in \prohibprog\\ \text{distinct}}} \ind{n\in P_1 \cap P_2} - \sum_{\substack{P_1, P_2, P_3\in \prohibprog\\ \text{distinct}}} \ind{n\in P_1 \cap P_2\cap P_3} + \cdots.
\end{equation}
has too many terms to be useful. We require a truncated version, also known as a combinatorial sieve. The combinatorial sieve we will use was developed by Helfgott and Radziwiłł \cite{HR}, using ideas from the theory of the Möbius function of partially ordered sets. Its two main features are the following.
\begin{itemize}
    \item Because the progressions $P\in \prohibprog$ have composite (square-free) moduli, several intersections of progressions in $\prohibprog$ can yield the same result. For example,
          \begin{equation*}
              5\Z \cap 6\Z \cap 7\Z= 14\Z \cap 30\Z= 2\Z \cap 6\Z \cap 15\Z \cap 21\Z.
          \end{equation*}
          Let $R$ be a progression. In the right-hand side of \cref{eq:inclexcl}, all of the terms $\pm \ind{n\in P_1\cap \ldots \cap P_i}$ with $i\geq 1$ and $P_1\cap \ldots \cap P_i = R$ can be combined, and simplify to $c_R \ind{n\in R}$ for some integer coefficient $c_R$. However, if the modulus $q_{R}$ of $R$ has $k$ prime factors, there can be close to $2^{2^k}$ ways of expressing $R$ as an intersection of distinct arithmetic progressions. The most naive bound would thus give $\abs{c_R} \leq 2^{2^k}$. This is much larger than what we can allow. Fortunately, the combinatorial interpretation\footnote{In combinatorial language, $c_R$ is a value of the Möbius function of the partially ordered set consisting of all possible intersections of prohibited progressions.} of this coefficient $c_R$ shows that there is a vast amount of cancellation from the $\pm1$ signs, and the much more reasonable bound $\abs{c_R} \leq 2^k$ holds.\footnote{Optimal bounds for $c_R$ are due to Sagan, Yeh and Ziegler (see \cite[after Corollary~2.5]{mobius}). Helfgott and Radziwiłł~\cite{HR} gave a one-line proof of the slightly weaker bound $\abs{c_R} \leq 2^k$ (see \cref{lem:rota}).}
    \item A classical way to approximate the inclusion-exclusion formula is by means of the \emph{Bonferroni inequalities}. These imply that, for any $t\geq 1$,
          \begin{equation*}
              \ind{n\in \Y} = \sum_{i=0}^{t-1} \sum_{\substack{P_1, \ldots, P_i \in \prohibprog\\ \text{distinct}}} (-1)^{i} \ind{n\in P_1\cap \ldots \cap P_i} + O\Bigg(\sum_{\substack{P_1, \ldots, P_t \in \prohibprog\\ \text{distinct}}} \ind{n\in P_1\cap \ldots \cap P_t}\Bigg).
          \end{equation*}
          In this simple version, the terms $(-1)^{i} \ind{n\in P_1\cap \ldots \cap P_i}$ with $i<t$ are kept in the main term, those with $i=t$ constitute the remainder term and the others can be discarded. We require a more flexible truncation method, not just based on the number $i$ of sets in the intersection, but on specific properties of the progression $P_1\cap \ldots \cap P_i$. For Helfgott and Radziwiłł \cite{HR}, this cut-off was determined by the number of prime factors of the modulus of the intersection $P_1\cap \ldots \cap P_i$. In this paper, the truncation and its analysis are significantly more technical.
\end{itemize}

The combinatorial sieve of Helfgott and Radziwiłł is stated in \cref{prop:sievesquare-free} for a general cut-off. We provide a self-contained proof of it in \cref{appendix:sieve} (a shortened version of that in \cite{HR}). We now apply it to rewrite the term $\ind{\forall i,\,n+\b_i \in \Y}$ as a suitable combination of arithmetic progressions.

\begin{notation}
    \label{def:Qintersect}
    Let $\s$, $\lit$, $\unlit$ be sets such that $\intK\times\intJ = \s\sqcup\lit\sqcup\unlit$ and let $1\leq r\leq \K$. Let~$\d\in \DD_{\s,\lit,r}$ and let $\bm{b} := (b_i)_{0\leq i\leq 2\K}$ be the associated vector of partial sums $b_i := \sum_{1\leq k\leq i} d_k$. We write
    \begin{equation*}
        {\shiftedprog := \{P-\b_i : P \in \prohibprog, \,0\leq i\leq 2\K\}}.
    \end{equation*}
    We also define
    \begin{equation*}
        \interprog := \bigg\{\bigcap_{P\in X}P : X \subset \shiftedprog\bigg\},
    \end{equation*}
    the set of all possible intersections of such shifted progressions (with the convention $\bigcap_{P\in \emptyset}P := \Z$).
\end{notation}

The next lemma captures our application of the combinatorial sieve. It is rather technical, and we defer its proof to \cref{sec:cutoffanalysis}. The statement of \cref{lem:existencerank} can be understood as follows. In \cref{item:ranksieve}, the approximate inclusion-exclusion formula is given, with a main term and a remainder term. The main term is a sum over all progressions with small \emph{rank}. The rank of a progression can be thought of as a measure of its complexity. It is a quantity depending on $\d$, but its precise definition is not immediately needed and hence will only be given later, in \cref{def:rank}. Two simple properties of the rank are given in \cref{item:rank1} and \cref{item:rank2}. Finally, the bound \cref{item:rank3} will be used to control the remainder term in the combinatorial sieve. The proof of \cref{item:rank3} is particularly intricate.
\begin{restatable}{lemma}{existencerank}
    \label{lem:existencerank}
    Let $\s$, $\lit$, $\unlit$ be sets such that $\intK\times\intJ = \s\sqcup\lit\sqcup\unlit$ and let $1\leq r\leq \K$.

    For every $\d \in \DD_{\s,\lit,r}$, there exists a function
    \begin{equation*}
        \rank : \interprog \to \Z^{\geq 0} \cup \{+\infty\}
    \end{equation*}
    satisfying the conditions below.

    Define the arithmetic progression $\progression := \{n\in \Z : \forall (i,j)\in \lit, \ \dsub{i}{j} \mid n+\b_i\}$.

    Let~$\Xd$ be the set of all $R\in \interprog$ such that ${\rank(R) < \K^{5\epsone}}$. Let $\partial \Xd$ be the set of all $R\in \interprog\setminus \Xd$ of the form $R = R'\cap P$ for some $R'\in \Xd$ and $P\in \shiftedprog$.

    Then, the following properties hold.
    \begin{enumerate}
        \item \label{item:rank1} (Primes dividing the modulus and $\d_{\s}$) For every $R\in \interprog$,
              \begin{equation*}
                  \abs{ \{p: p\mid (q_R, \d_{\s})\}}\leq \J\L  \, \rank(R).
              \end{equation*}
        \item \label{item:rank2} (Primes dividing the modulus) For every $R\in \interprog$,
              \begin{equation*}
                  \omega(q_{R}) \ll \J\L  \, \rank(R) + \J\K.
              \end{equation*}
        \item \label{item:ranksieve} (Combinatorial sieve) For all $n\in \Z$, we have
              \begin{equation*}
                  \ind{\forall i,\,n+\b_i \in \Y}\ind{\progression}(n)  = \sum_{\substack{R\in \Xd}} c_{R,\d}  \ind{n\in R\cap \progression}\,  +\,  O\Bigg(  e^{O(\J\K)}  \!\!\! \sum_{\substack{R\in \partial\Xd \\ R\cap \progression \neq \emptyset}} \!\!\! \ind{n\in R\cap \progression} \Bigg),
              \end{equation*}
              where the coefficients $c_{R,\d}$ are independent of $n$ and satisfy $|c_{R,\d}| \leq e^{O(\J\K)}$.
        \item \label{item:rank3} (Remainder term bound) Suppose that $|\unlit| \leq\K^{2\epsone}$. Then
              \begin{equation*}
                  \sum_{\d\in \DD_{\s,\lit,r}} \sum_{\substack{R\in \partial\Xd \\ R\cap \progression \neq \emptyset}}\, \prod_{\substack{p\mid q_{R} \pall}}\frac{1}{p} \ll 1.
              \end{equation*}
    \end{enumerate}
\end{restatable}

We now have all the necessary ingredients to prove \cref{prop:cancellationoverY}.

\begin{proof}[Proof of \cref{prop:cancellationoverY}, assuming \cref{lem:existencerank}]
    We use the combinatorial sieve to rewrite the term involving $\Y$ in terms of arithmetic progressions: by part \cref{item:ranksieve} of \cref{lem:existencerank} and the triangle inequality, we have
    \begin{equation}
        \label{eq:cancellationEsum}
        E_{\s,\lit,\unlit, r} \leq e^{O(\J\K)} \left(F_1+F_2\right)
    \end{equation}
    where
    \begin{equation*}
        F_1 := \sum_{\d\in \DD_{\s,\lit,r}} \sum_{R\in \Xd} \Bigg\lvert \sum_{\substack{n\in \N\cap R}} \ind{\substack{\dsub{i}{j}\mid n+\b_i \, \forall (i,j)\in \lit \\ \dsub{i}{j}\nmid n+\b_i \, \forall (i,j)\in \unlit}} \prod_{(i,j)\in \intK\times \intJ}\left(\ind{\dsub{i}{j}\mid n + b_i}-\frac{1}{\dsub{i}{j}}\right)\Bigg\rvert
    \end{equation*}
    and
    \begin{equation*}
        F_2 := \sum_{\substack{\d\in \DD_{\s,\lit,r}}} \sum_{\substack{R\in \partial\Xd\\ R\cap \progression \neq \emptyset}} \sum_{\substack{n\in \N\cap R}} \ind{\substack{\dsub{i}{j}\mid n+\b_i \, \forall (i,j)\in \lit \\ \dsub{i}{j}\nmid n+\b_i \, \forall (i,j)\in \unlit}} \prod_{(i,j)\in \intK\times \intJ}\abs{\ind{\dsub{i}{j}\mid n + b_i}-\frac{1}{\dsub{i}{j}}}.
    \end{equation*}

    We wish to apply \cref{lem:APcancellation} to bound the innermost sum in $F_1$.  To do so, we must check that the modulus of the arithmetic progression $R$ is square-free and does not divide $\d_{\s}$. The first condition is immediate since $R\in \interprog$ and all prohibited progressions have square-free moduli. To verify the second condition, observe that the number of primes dividing both $q_R$ and $\d_{\s}$ is bounded by
    \begin{equation*}
        \leq \J\L  \, \rank(R) \leq \J\K^{1-10\epsone} \K^{5\epsone} \ll \K^{1-4\epsone},
    \end{equation*}
    using part \cref{item:rank1} of \cref{lem:existencerank} and the definition of $\Xd$. On the other hand, $\d_{\s}$ has $\abs{\s} \geq \K^{1-\epsone}$ distinct prime factors by assumption. Since $K$ is sufficiently large, these bounds ensure that $\d_{\s} \nmid q_R$.

    Therefore, by \cref{lem:APcancellation}, for every $R \in \Xd$ we have
    \begin{equation*}
        \sum_{\substack{n\in \N\cap R}} \ind{\substack{\dsub{i}{j}\mid n+\b_i \, \forall (i,j)\in \lit \\ \dsub{i}{j}\nmid n+\b_i \, \forall (i,j)\in \unlit}} \prod_{(i,j)\in \intK\times \intJ}\left(\ind{\dsub{i}{j}\mid n + b_i}-\frac{1}{\dsub{i}{j}}\right) \ll H^{2\K} q_R.
    \end{equation*}
    All primes dividing $q_R$ are in $\primes$, so by part \cref{item:rank2} of \cref{lem:existencerank} and the definition of $\Xd$, we have
    \begin{equation}
        \label{eq:boundqR}
        q_R \leq H^{\omega(q_R)} \leq H^{O(\J\L  \, \rank(R)+\J\K)}\ll H^{O(\J\K)}.
    \end{equation}
    To complete the estimation of $F_1$, we only need a crude bound for the sums over $\d\in \DD_{\s,\lit,r}$ and $R\in \Xd$. Since an arithmetic progression $R$ is uniquely determined by its modulus $q_R$ and a residue class modulo ${q_R}$, by \cref{eq:boundqR}, there are $\ll H^{O(\J\K)}$ choices for $R\in \Xd$. Moreover, there are at most $(2H)^{2\K}$ choices for $\d\in \DD_{\s,\lit,r}$. We thus have
    \begin{equation}
        \label{eq:cancellationboundS1}
        F_1 \ll H^{O(\J\K)}.
    \end{equation}

    We turn to the second term $F_2$. By the triangle inequality, we have
    \begin{equation}
        \label{eq:firstboundF2}
        F_2 \leq \sum_{\s_1\sqcup \s_2 = \s}\, \sum_{\d\in \DD_{\s,\lit,r}} \sum_{\substack{R\in \partial\Xd\\ R\cap \progression \neq \emptyset}} \sum_{\substack{n\in \N\cap R}} \bigg(\prod_{(i,j)\in \lit \sqcup \s_1} \ind{\dsub{i}{j}\mid n+\b_i}\bigg)\bigg( \prod_{(i,j)\in \unlit\sqcup \s_2}\frac{1}{\dsub{i}{j}}\bigg).
    \end{equation}

    For each $R\in \partial\Xd$, by definition of $\partial\Xd$, there are progressions $R'\in \Xd$ and $P\in \shiftedprog$ such that $R = R'\cap P$. This implies that $q_R \leq q_{R'} q_P$. Note that $\omega(q_P) \leq \J\L$ by definition of a prohibited progression, so that $q_P \leq H^{\J\L}$. In addition, we have $q_{R'} \ll H^{O(\J\K)}$ by \cref{eq:boundqR}. This proves that
    \begin{equation}
        \label{eq:boundqR2}
        q_R \ll H^{O(\J\K)}
    \end{equation}
    for all $R\in \partial\Xd$.

    The estimate \cref{eq:boundqR2}, the simple bound $\pall \leq H^{2\K}$ and \cref{eq:boundHJK} imply that
    \begin{equation}
        \label{eq:Ycancellationinterbound}
        \sum_{\substack{n\in \N\cap R}}\, \prod_{(i,j)\in \lit \sqcup \s_1} \ind{\dsub{i}{j}\mid n+\b_i} \ll N \prod_{p\mid q_{R} \d_{\lit \sqcup \s_1}}\frac{1}{p}.
    \end{equation}
    Substituting \cref{eq:Ycancellationinterbound} into \cref{eq:firstboundF2}, we obtain
    \begin{equation*}
        F_2 \ll 2^{\abs{S}} N \sum_{\d \in \DD_{\s,\lit,r}} \sum_{\substack{R\in \partial\Xd\\ R\cap \progression \neq \emptyset}}\, \prod_{p\mid q_{R} \pall }\frac{1}{p}.
    \end{equation*}
    Here, we recognise the expression appearing in part \cref{item:rank3} of \cref{lem:existencerank}. Applying this lemma, we get
    \begin{equation}
        \label{eq:cancellationboundS2}
        F_2\ll e^{O(\J\K)}N.
    \end{equation}

    Finally, substituting \cref{eq:cancellationboundS1} and \cref{eq:cancellationboundS2} into \cref{eq:cancellationEsum}, we obtain the claimed bound $E_{\s,\lit,\unlit, r} \ll e^{O(\J\K)}N$.
\end{proof}

\section{Predictable walks}
\label{sec:predictable}

By \cref{lem:unlit,prop:cancellationoverY}, it remains to bound the expression $E_{\s,\lit,\unlit, r}$ in the case where there are few single indices and very few unlit indices. In this regime, the inner sum appearing in the definition of $E_{\s,\lit,\unlit, r}$ yields no meaningful cancellation, so we begin by applying the triangle inequality.

\begin{lemma}
    \label{lem:trivboundE}
    Let $\s$, $\lit$, $\unlit$ be sets such that $\intK\times\intJ = \s\sqcup\lit\sqcup\unlit$ and let $1\leq r\leq \K$. Then
    \begin{equation*}
        E_{\s,\lit,\unlit, r} \ll e^{O(\J\K)} N \sum_{\d \in \DD_{\s,\lit,r}}\, \prod_{p\mid \pall} \frac{1}{p}.
    \end{equation*}
\end{lemma}
\begin{proof}
    By definition of $E_{\s,\lit,\unlit, r}$ and the triangle inequality,
    \begin{equation*}
        E_{\s,\lit,\unlit, r} \leq \sum_{\s_1\sqcup \s_2 = \s}\, \sum_{\d\in \DD_{\s,\lit,r}} \sum_{n\in \N}
        \bigg(\prod_{(i,j)\in \lit \sqcup \s_1} \ind{\dsub{i}{j}\mid n+\b_i}\bigg)\bigg( \prod_{(i,j)\in \unlit\sqcup \s_2}\frac{1}{\dsub{i}{j}}\bigg).
    \end{equation*}
    Since $\pall \leq H^{2\K} \ll N$ by \cref{eq:boundHJK}, we have
    \begin{equation*}
        \sum_{n\in \N}
        \prod_{(i,j)\in \lit \sqcup \s_1} \ind{\dsub{i}{j}\mid n+\b_i} \ll N \prod_{p\mid \d_{\lit \sqcup \s_1}} \frac{1}{p}.
    \end{equation*}
    Simplifying gives the lemma.
\end{proof}

By \cref{lem:trivboundE}, our task is reduced to giving a good bound for
\begin{equation}
    \label{eq:sumoverD}
    \sum_{\substack{\d\in \DD_{\s,\lit,r}}}\,\prod_{p\mid \pall}\, \frac{1}{p}.
\end{equation}
A similar sum over the larger domain $\d \in (\pm \D)^{2\K}$ was analysed in \cref{lem:trivialbound}, where we obtained an optimal bound of order $\K^{O(\J\K)}$. However, such a bound is far too large for our purposes here, as we require a bound of the form $e^{O(\J\K)}\V^{\J\K}$. To achieve this, we must exploit the structure of the set $\DD_{\s,\lit,r}$. This marks the beginning of a purely combinatorial phase of the argument, where we analyse how the conditions defining $\DD_{\s,\lit,r}$ limit the complexity of the sum.

In the present setting, most indices are lit. The central condition is property \cref{item:Dprop2} of \cref{def:D}, which imposes a divisibility relation for every pair of lit indices corresponding to the same prime. Obtaining sufficient control over the dependencies among these divisibility conditions requires a detailed combinatorial analysis.

\subsection{Predictable words}
For our combinatorial work, we adopt the language of words and letters. The general statements we prove in this setting will later be applied to sequences of primes from a fixed set~$\primes_j$, to track repeated occurrences of the same prime along a walk.

\begin{definition}
    \label{def:words}
    Let $\mathcal{A}$ be a finite set (the alphabet). Let $\word_n$ be the set of all $n$-letter words on $\mathcal{A}$, where no two consecutive letters are the same. Let $\word_n^{\neq} \subset \word_n$ be the set of all $n$-letter words on $\mathcal{A}$ with distinct letters. Let $\word = \bigcup_{n\geq 1}\word_n$ and $\word^{\neq} = \bigcup_{n\geq 1}\word_n^{\neq}$.

    For $w\in \word_n$ and $1\leq k\leq n$, we write $w[k]$ for the $k$-th letter of $w$. We denote by $\letters{w}$ the set of all letters of $w$.

    We denote the set of all positions of the letter \upletters{A} in $w$ by $\pos{w}{\upletters{A}} := \{k\in \sinterval{n} : w[k] = \upletters{A}\}$. For~$l\in \sinterval{n}$, we also write $\posbis{w}{l} :=  \{k\in \sinterval{n} : w[k] = w[l]\}$ (instead of `$\posbis{w}{w[l]}$').

    The notation $v \sqsubset w$ means that $v$ is a \emph{substring} of $w$, i.e.~a sequence of consecutive letters of $w$.

    We write $\overleftarrow{w}$ for the word obtained by writing the letters of $w$ in the reversed order.

    The \emph{concatenation} of two words $w_1$ and $w_2$ is the word obtained by appending the letters of $w_2$ at the end of $w_1$. We denote it by $w_1w_2$.
\end{definition}

We now introduce a measure of the amount of structure in a word. We do so by counting the number of letters with \emph{constant neighbours}. These are letters for which every occurrence is always surrounded by the same set of letters. When most letters in a word have constant neighbours, their repetition patterns are jointly well-behaved.

\begin{definition}
    \label{def:cstneighb}
    Let $w\in \word$ and $\upletters{A}\in \mathcal{A}$. We say that $\upletters{A}$ has \emph{variable neighbours in $w$} if there exist two occurrences of $\upletters{A}$ in $w$, such that the letters immediately adjacent to $\upletters{A}$ at these two positions form two distinct sets. Otherwise, $\upletters{A}$ is said to have \emph{constant neighbours in $w$}.
\end{definition}

For example,
\begin{center}
    \begin{tabular}{c|c|c}
        $w$                     & set of neighbours for every occurrence of \upletters{A} in $w$  & neighbours of \upletters{A} in $w$ \\ \hline
        \upletters{XAYZYAXAY}   & \{\upletters{X,Y}\},  \{\upletters{X,Y}\},  \{\upletters{X,Y}\} & constant                           \\
        \upletters{AXYXAXZY}    & \{\upletters{X}\}, \{\upletters{X}\}                            & constant                           \\
        \upletters{XAYZYAYZXAY} & \{\upletters{X,Y}\}, \{\upletters{Y}\}, \{\upletters{X,Y}\}     & variable                           \\
        \upletters{YAXYZAXA}    & \{\upletters{X,Y}\}, \{\upletters{X,Z}\}, \{\upletters{X}\}     & variable
    \end{tabular}.
\end{center}

\begin{definition}
    \label{def:tpredictable}
    A word $w\in \word$ is said to be \emph{$t$-predictable} if the following conditions both hold.
    \begin{enumerate}
        \item Every letter appears $\leq t$ times in $w$.
        \item There are $\leq t$ letters with variable neighbours in $w$.
    \end{enumerate}
    Otherwise $w$ is called \emph{$t$-unpredictable}.
\end{definition}

\subsection{Counting predictable words}
We show in \cref{lem:countpredict} that, up to letter relabelling, there are $\leq n^{O(t^3)}$ words of length $n$ which are $t$-predictable. For moderate values of $t$, this improves on the trivial bound given by the number of partitions of $\sinterval{n}$, which is $n^{(1+o(1))n}$.

For this section, we could have used the language of partitions since our primary focus is on the positions of the letters, and not the letters themselves. However, we found it more convenient to use words for \cref{sec:unpredictable}, so we will use them here as well.

\begin{definition}
    \label{def:wordequivalence}
    Two words $w_1$ and $w_2$ are said to be \emph{equivalent} if one can be obtained from the other by letter relabelling. That is, they have the same length $n$ and there is a bijection $\phi : \letters{w_1} \to \letters{w_2}$ such that $w_2[i] = \phi(w_1[i])$ for all $i\in \sinterval{n}$.
\end{definition}

The next lemma states that the sets of positions of a certain subset $L_w$ of the letters of any word $w\in \word$ uniquely determines the sets of positions of all the other letters.
\begin{lemma}
    \label{lem:uniquenesspredict}
    Let $n\geq 2$ and let $w_1, w_2\in \word_n$. For $i\in \{1, 2\}$, let $L_i \subset \letters{w_i}$ be defined by
    \begin{align*}
        L_i := & \{w_i[1], w_i[2]\} \cup \{\upletters{A}\in \letters{w_i} : \upletters{A}\text{ has variable neighbours in }w_i\} \\&\quad \cup \{\upletters{B}\in \letters{w_i} : \upletters{B}\text{ appears in $w_i$ next to a letter }\upletters{A}\text{ having variable neighbours in }w_i\}.
    \end{align*}
    Suppose that $\big\{ \pos{w_1}{\upletters{A}} : \upletters{A} \in L_1 \big\} = \big\{ \pos{w_2}{\upletters{A}} : \upletters{A} \in L_2 \big\}$. Then $w_1$ and $w_2$ are equivalent.
\end{lemma}
\begin{proof}
    Suppose that the conclusion does not hold, and let $k\geq 1$ be minimal with the property that $\posbis{w_1}{k}\neq \posbis{w_2}{k}$. Hence, $w_1[k] \not\in L_1$ and $w_2[k] \not\in L_2$ by the assumption in the statement. In particular, $k\geq 3$ since $w_i[1], w_i[2]\in L_i$.

    Note that $w_1[k]\neq w_1[k-2]$. Indeed, if $w_1[k] = w_1[k-2]$, we would have
    \begin{equation*}
        k\in \posbis{w_1}{k} = \posbis{w_1}{k-2} = \posbis{w_2}{k-2},
    \end{equation*}
    where the last equality follows from the minimality of $k$. This would imply that
    \begin{equation*}
        \posbis{w_2}{k} = \posbis{w_2}{k-2} = \posbis{w_1}{k},
    \end{equation*}
    contradicting our assumption.

    By definition of $L_1$, both $w_1[k]$ and $w_1[k-1]$ have constant neighbours in $w_1$. This means that every occurrence of the letter $w_1[k-1]$ in $w_1$ is surrounded by the (distinct) letters $w_1[k-2]$ and $w_1[k]$, in any order. In addition, every appearance of $w_1[k]$ is adjacent to an occurrence of $w_1[k-1]$. Thus, we may describe $\posbis{w_1}{k}$ exactly as
    \begin{equation}
        \label{eq:expressionPos}
        \posbis{w_1}{k} = \big\{l\in \sinterval{n}\  :\  \{l-1, l+1\} \cap \posbis{w_1}{k-1} \neq \emptyset,\  l\not\in \posbis{w_1}{k-2}\big\}.
    \end{equation}

    The same reasoning with $w_2$ shows that
    \begin{equation}
        \label{eq:expressionPos2}
        \posbis{w_2}{k} = \big\{l\in \sinterval{n}\  :\  \{l-1, l+1\} \cap \posbis{w_2}{k-1} \neq \emptyset,\  l\not\in \posbis{w_2}{k-2}\big\}.
    \end{equation}
    However, $\posbis{w_1}{k-2} = \posbis{w_2}{k-2}$ and $\posbis{w_1}{k-1} = \posbis{w_2}{k-1}$ by minimality of $k$, so \cref{eq:expressionPos,eq:expressionPos2} imply that $\posbis{w_1}{k} = \posbis{w_2}{k}$, a contradiction.
\end{proof}

We can now prove our bound for the number of inequivalent $t$-predictable words of length $n$.
\begin{lemma}
    \label{lem:countpredict}
    Let $n, t\geq 1$. There are $\leq n^{O(t^3)}$ $t$-predictable words in $\word_n$, up to equivalence.
\end{lemma}

\begin{proof}
    By \cref{lem:uniquenesspredict}, it suffices to bound the number of possibilities for the set
    \begin{equation}
        \label{eq:prepartition}
        \big\{ \pos{w}{\upletters{A}} : \upletters{A} \in L_w \big\}
    \end{equation}
    where $w$ ranges over the set of $t$-predictable words in $\word_n$, and $L_w\subset \letters{w}$ is the set defined in \cref{lem:uniquenesspredict} (with $w$ instead of $w_i$).

    If $w$ is $t$-predictable, there are $\leq t$ letters with variable neighbours. Moreover, every letter appears $\leq t$ times, so for every letter \upletters{A} there are $\leq 2t$ letters adjacent to an occurrence of \upletters{A}. Thus, by definition of $L_w$ we have
    \begin{equation*}
        \abs{L_w} \leq 2+t+t\cdot 2t \leq 5t^2.
    \end{equation*}

    For every $\upletters{A}\in L_w$, the set $\pos{w}{\upletters{A}}$ is a subset of $\sinterval{n}$ of size $\leq t$, and there are $\leq n^t$ such sets. Hence, there are $\leq (n^t)^{5t^2}$ possibilities for the set in \cref{eq:prepartition}, which concludes the proof.
\end{proof}

\subsection{Contribution of predictable walks}
We define a subset $\pred\subset \DD_{\s,\lit,r}$ of tuples $\d$ for which three words defined in terms of the primes $\dsub{i}{j}$ are predictable. By \cref{lem:countpredict}, this combinatorial restriction allows us to bound the contribution of these $\d$ to \cref{eq:sumoverD}. The remaining tuples will be handled in \cref{sec:unpredictable}.

\begin{definition}[$v_{j,\d}^{(k)}$, $w_{j,\d}^{(k)}$]
    \label{def:primewords}
    Let $\s$ and $\lit$ be disjoint subsets of $\intK\times \intJ$ and let $1\leq r\leq \K$.

    Let $\d \in \DD_{\s,\lit,r}$ and $j\in \intJ$. For each $1\leq k \leq 3$, we define a word $v_{j,\d}^{(k)}$ on the alphabet $\primes_j$, whose letters are the primes in $\primes_j$ corresponding to the tuple $\d^{(k)}$ from \cref{def:D}. Explicitly,
    \begin{equation*}
        \begin{cases}
            v_{j,\d}^{(1)} := \dsub{1}{j}\dsub{2}{j}\cdots \dsub{\K}{j},                \\[5pt]
            v_{j,\d}^{(2)} := \dsub{(\K+1)}{j} \dsub{(\K+2)}{j} \cdots \dsub{(2\K)}{j}, \\[5pt]
            v_{j,\d}^{(3)} := \dsub{1}{j} \dsub{2}{j} \cdots \dsub{(\K-r)}{j} \dsub{(\K+r+1)}{j} \dsub{(\K+r+2)}{j} \cdots \dsub{(2\K)}{j}.
        \end{cases}
    \end{equation*}

    We define $w_{j,\d}^{(k)}$ to be the \emph{compression} of $v_{j,\d}^{(k)}$, meaning the word formed by replacing any string of consecutive occurrences of a letter with a single instance of that letter. Thus, $w_{j,\d}^{(k)} \in \word$.
\end{definition}

\begin{definition}[$\pred$, $\unpred$]
    \label{def:predunpred}
    Let $\s$ and $\lit$ be disjoint subsets of $\intK\times \intJ$ and let $1\leq r\leq \K$.

    We define $\pred$ to be the set of all tuples $\d\in \DD_{\s,\lit,r}$ such that, for every $j\in \intJ$, the three words $w_{j,\d}^{(1)}$, $w_{j,\d}^{(2)}$ and $w_{j,\d}^{(3)}$ are $\K^{1/4}$-predictable. We write $\unpred := \DD_{\s,\lit,r} \setminus \pred$.
\end{definition}

We conclude this section by bounding the contribution of the tuples $\d \in \pred$ to \cref{eq:sumoverD}.

\begin{proposition}
    \label{prop:predictablevector}
    Let $\s$, $\lit$, $\unlit$ be sets such that $\intK\times\intJ = \s\sqcup\lit\sqcup\unlit$ and let $1\leq r\leq \K$. Then
    \begin{equation*}
        \sum_{\substack{\d \in \pred}} \, \prod_{p\mid \pall} \frac{1}{p} \ll e^{O(\J\K)}\V^{|\s|+(\abs{\lit}+\abs{\unlit})/2}.
    \end{equation*}
\end{proposition}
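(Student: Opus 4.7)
The plan is to exploit the product structure of the problem. Since the prime sets $\primes_j$ are pairwise disjoint, the set of distinct primes dividing $\pall$ splits as $\bigsqcup_j \letters{v_{j,\d}}$, and hence
\[
\prod_{p\mid\pall}\frac{1}{p} \;=\; \prod_{j=1}^{\repR}\prod_{p\in\letters{v_{j,\d}}}\frac{1}{p}.
\]
Moreover, each $\d\in\pred$ is determined by its sign pattern (a factor of $2^{\tupsize}$) together with the column words $v_{j,\d}$. Relaxing the sum over $\pred$ by dropping the sum-to-zero, non-backtracking, and lit-divisibility conditions (which only restrict the summation domain) and factoring out signs, the sum decouples as
\[
\sum_{\d\in\pred}\prod_{p\mid\pall}\frac{1}{p} \;\leq\; 2^{\tupsize}\prod_{j=1}^{\repR}\mathrm{Count}_j,
\]
where $\mathrm{Count}_j$ ranges over words $v_j$ of length $\tupsize$ on $\primes_j$ whose compression is $\repK^{1/4}$-predictable and whose repetition pattern is compatible with the column-$j$ slices $S_j:=\{i:(i,j)\in\s\}$ and $L_j:=\{i:(i,j)\in\lit\cup\unlit\}$.

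To bound each $\mathrm{Count}_j$ I would first fix the shape of $v_j$ (the partition of $\sinterval{\tupsize}$ induced by equal letters) and then sum over prime assignments. If the shape has $m_j$ classes, the sum over distinct primes from $\primes_j$ assigned to those classes contributes at most $\curlyL_j^{m_j}$. Compatibility with $S_j,L_j$ forces the $|S_j|$ positions in $S_j$ to form singleton classes, while the remaining classes (partitioning $L_j$) each have size $\geq 2$; this yields the key inequality
\[
m_j \;\leq\; |S_j| + \tfrac{1}{2}|L_j|,
\]
which, after multiplying over $j$, is what produces the target exponent $|\s|+(|\lit|+|\unlit|)/2$.

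The count of the admissible shapes is where \cref{lem:countpredict} enters. Every shape of $v_j$ arises from a $\repK^{1/4}$-predictable word $w_j\in\word_{r_j}$ (the compression of $v_j$, of some length $r_j\leq\tupsize$) together with a composition of $\tupsize$ into $r_j$ positive parts specifying how many times each letter of $w_j$ is repeated. \Cref{lem:countpredict} gives $\leq r_j^{O(\repK^{3/4})}$ shapes of $w_j$, and there are at most $\binom{\tupsize-1}{r_j-1}\leq 2^{\tupsize}$ compositions. Summing over $r_j\in\sinterval{\tupsize}$ and using $\repK^{3/4}\log\repK=o(\repK)$ gives $e^{O(\repK)}$ admissible shapes of $v_j$, whence $\mathrm{Count}_j\leq e^{O(\repK)}\curlyL_j^{|S_j|+|L_j|/2}$. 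Taking the product over $j$, with $\sum_j|S_j|=|\s|$, $\sum_j|L_j|=|\lit|+|\unlit|$ and $\curlyL_j\leq\curlyL$, then yields the claimed bound.

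The proof is essentially bookkeeping once these ingredients are in place: the real content is packaged into \cref{lem:countpredict}, which is where the predictability hypothesis is actually exploited, together with the elementary observation $m_j\leq|S_j|+|L_j|/2$. The only mildly delicate point is to verify that the relaxations (dropping the non-backtracking, sum-to-zero, and reduced-walk conditions of \cref{def:dnb}) do give a valid upper bound, i.e.\ that every $\d\in\pred$ is counted at least once by the column-wise parametrisation above.
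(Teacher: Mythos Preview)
Your proposal is correct and follows essentially the same approach as the paper. The only cosmetic difference is that you explicitly factor the sum column-by-column over $j\in\newR$, whereas the paper works directly with the global partition $\Pi_{\d}$ of $\sinterval{\tupsize}\times\newR$ (which it then observes is determined by the column partitions $\Pi_{j,\d}$); both routes use \cref{lem:countpredict} plus a composition count to bound the number of shapes by $e^{O(\repK)}$ per column, and both exploit the same inequality $m_j\leq |S_j|+|L_j|/2$ (equivalently, the number of distinct primes dividing $\pall$ is at most $|\s|+\tfrac12(|\lit|+|\unlit|)$).
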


\begin{proof}
    The proof resembles that of \cref{lem:trivialbound}. The key difference is that we restrict to partitions coming from $\K^{1/4}$-predictable words, thereby avoiding combinatorial explosion.

    We first identify the combinatorial data needed to uniquely reconstruct a tuple $\d\in \pred$. Any $\d \in (\pm \D)^{2\K}$ is uniquely determined by a sequence $\sigma \in \{\pm 1\}^{2\K}$ of signs and, for each $j\in \intJ$, a word $u_j$ of length $2\K$, along with a sequence of distinct primes $(p_{\upletters{A}})_{\upletters{A}\in \letters{u_j}}$ with each $p_{\upletters{A}}\in \primes_j$. Indeed, given this data, we can reconstruct $\d$ by setting $\mathrm{sign}(d_i) = \sigma_i$ and $\dsub{i}{j} = p_{\upletters{A}}$ if $u_j[i] = \upletters{A}$.

    Fix some $j\in \intJ$. We claim that there are at most $e^{O(\K)}$ possibilities for the word $u_j$ up to equivalence, as $\d$ ranges over $\pred$. To prove this, it suffices to show that:
    \begin{enumerate}[label=(\roman*), ref=\roman*]
        \item \label{item:countwords} there are $\leq e^{O(\K)}$ possible words $\big(v_{j,\d}^{(k)}\big)_{1\leq k\leq 3}$ up to equivalence, and
        \item \label{item:countpartitions} the equivalence class of $u_j$ is uniquely determined by the equivalence classes of $\big(v_{j,\d}^{(k)}\big)_{1\leq k\leq 3}$.
    \end{enumerate}

    Let $1\leq k\leq 3$, and denote by $n_k$ the length of $w_{j,\d}^{(k)}$. Since $n_k\leq 2\K$, by \cref{lem:countpredict}, there are
    \begin{equation*}
        \leq 2\K \cdot (2\K)^{O(\K^{3/4})} \ll e^{\K}
    \end{equation*}
    possibilities for the $\K^{1/4}$-predictable word $w_{j,\d}^{(k)}$ up to equivalence. To recover the equivalence class of $v_{j,\d}^{(k)}$ from that of the compressed word $w_{j,\d}^{(k)}$, we need to specify a vector $(c_1, c_2, \ldots, c_{n_k})$ of positive integers, where $c_i$ records the number of consecutive repetitions of the $i$-th letter of $w_{j,\d}^{(k)}$ in $v_{j,\d}^{(k)}$. Since the length of $v_{j,\d}^{(k)}$ is $\leq 2\K$, the number of such integer vectors is $\leq e^{O(\K)}$. Hence, the total number of possible~$v_{j,\d}^{(k)}$, modulo equivalence, is also $\leq e^{O(\K)}$, which proves \cref{item:countwords}.

    We turn to \cref{item:countpartitions}. Fix $\big(v_{j,\d}^{(k)}\big)_{1\leq k\leq 3}$ modulo equivalence, and let $u$ and $u'$ be words of length $2\K$ that are possible candidates for $u_j$. We need to show that $u$ and $u'$ are equivalent. By \cref{def:primewords}, the compatibility of $u$ and $u'$ with $\big(v_{j,\d}^{(k)}\big)_{1\leq k\leq 3}$ means that
    \begin{equation}
        \label{eq:compatibility}
        \begin{cases}
            u|_{\sinterval{\K}} \sim u'|_{\sinterval{\K}}             \\
            u|_{\interval{\K+1}{2\K}} \sim u'|_{\interval{\K+1}{2\K}} \\
            u|_{\sinterval{\K-r} \cup \interval{\K+r+1}{2\K}} \sim u'|_{\sinterval{\K-r} \cup \interval{\K+r+1}{2\K}}
        \end{cases}
    \end{equation}
    where $\sim$ denotes equivalence of words and $u|_I$ denotes subword of $u$ restricted to the indices in $I$. By part \cref{item:Dprop3} of \cref{def:D}, we also know that
    \begin{equation}
        \label{eq:compatibility2}
        u[\K+\ell] = u[\K-\ell+1] \quad \text{and} \quad u'[\K+\ell] = u'[\K-\ell+1]
    \end{equation}
    for all $\ell \in \sinterval{r}$.

    Suppose that there are $i_1<i_2$ such that $u[i_1] = u[i_2]$, and let us show that $u'[i_1] = u'[i_2]$.
    \begin{itemize}
        \item If $i_2\leq \K$, the first equation in \cref{eq:compatibility} implies that $u'[i_1] = u'[i_2]$.
        \item If $i_1 >\K$, the second equation in \cref{eq:compatibility} implies that $u'[i_1] = u'[i_2]$.
        \item If $i_1, i_2 \in \sinterval{\K-r} \cup \interval{\K+r+1}{2\K}$, the third equation in \cref{eq:compatibility} implies that $u'[i_1] = u'[i_2]$.
        \item If $i_1 \leq \K$ and $i_2 = \K+\ell$ for some $\ell\in \sinterval{r}$, then $u[i_1] =u[i_2] = u[\K-\ell+1]$ by \cref{eq:compatibility2}. The first equation in \cref{eq:compatibility} implies that $u'[i_1] = u'[\K-\ell+1]$, and thus $u'[i_1] = u'[i_2]$ by \cref{eq:compatibility2}.
        \item If $i_1 = \K-\ell+1$ for some $\ell\in \sinterval{r}$ and $i_2 > \K$, then $u[i_2] = u[i_1] = u[\K+\ell]$ by \cref{eq:compatibility2}. The second equation in \cref{eq:compatibility} implies that $u'[i_2] = u'[\K+\ell]$, and thus $u'[i_1] = u'[i_2]$ by \cref{eq:compatibility2}.
    \end{itemize}
    Therefore, $u[i_1] = u[i_2]$ implies that $u'[i_1] = u'[i_2]$ in all cases. Of course, the converse is also true by symmetry. This shows that $u$ and $u'$ are equivalent, proving \cref{item:countpartitions}.

    We can now finish the proof of the proposition. For every $\d\in \DD_{\s,\lit,r}$, the number of distinct primes dividing $\pall$ is $\leq |\s| + \tfrac{1}{2}(\abs{\lit}+\abs{\unlit})$, as every $\dsub{i}{j}$ with $(i,j)\not\in \s$ appears at least twice. Summing over all sequences of signs $\sigma$, possible words $(u_j)_{j\in \intJ}$ modulo equivalence, and primes $(p_{\upletters{A}})_{\upletters{A}\in \letters{u_j}}$ for every $j$, we get
    \begin{equation*}
        \sum_{\substack{\d \in \pred}} \, \prod_{p\mid \pall} \frac{1}{p} \leq 2^{2\K}\big(e^{O(\K)}\big)^{\J}\V^{|\s|+(\abs{\lit}+\abs{\unlit})/2},
    \end{equation*}
    using that ${\sum_{p\in \primes_j}1/p = \V_j\leq \V}$ for every $j$. Simplifying, we obtain the desired bound.
\end{proof}

\section{Triangular systems of divisibility constraints}
\label{sec:systems}

To bound the contribution of unpredictable walks and to treat prohibited sequences, we will need to count vectors $\d$ with coordinates $d_i\in \pm \D$ satisfying certain divisibility relations. The precise form of these relations will vary. In this section, we introduce a general framework for them that encompasses all the cases that will arise.

\subsection{Constraints}
We begin by giving a precise definition of a \emph{constraint} on a vector $\d$.

\begin{notation}
    \label{not:dijagain}
    For $R \geq 1$ and $d\in (\pm \D)^{R}$, we write $\dsub{i}{j}$ for the unique prime in $\primes_j$ dividing $d_i$.
\end{notation}

\begin{definition}
    \label{def:constraints}
    Let $R \geq 1$ and let $\d \in (\pm \D)^{R}$. A \emph{constraint} on $\d$ is any predicate of the form
    \begin{equation}
        \label{eq:shapeofconstraint}
        \dsub{i_0}{j_0} \bigmid\  \sum_{i\in I} d_i + \kappa
    \end{equation}
    for some $I\subset \sinterval{R}$, $(i_0, j_0)\in \sinterval{R} \times \intJ$ and $\kappa\in \Z$, where $I$ is the union of at most $10$ discrete intervals.\footnote{This condition on the combinatorial complexity of $I$ ensures that the number of possible $I$ is polynomial in $R$. It will always hold in practice.} The integer $\kappa$ is called the \emph{shift} of the constraint.
\end{definition}

This constraint \cref{eq:shapeofconstraint} should be viewed as a polynomial divisibility condition on the primes $\dsub{i}{j}$. In many cases, the shift $\kappa$ will be zero.

We now define what it means for a prime to be \emph{absent} from a constraint, and \emph{involved} in a constraint.

\begin{definition}
    \label{def:absent}
    A prime $p\in \primes$ is \emph{absent} from a constraint `$\dsub{i_0}{j_0} \mid \sum_{i\in I} d_i + \kappa$' if $p\neq \dsub{i_0}{j_0}$ and $p\nmid d_i$ for all $i\in I$.
\end{definition}

The definition of a prime $p$ being \emph{involved} in a constraint is not merely the negation of being absent, because we want to make sure that the constraint is not `degenerate' when viewed as a condition on $p$. For instance, consider the constraint ${\dsub{1}{1} \mid d_2 + d_3}$. If it happens that $\dsub{1}{1} = \dsub{2}{1} = \dsub{3}{1}$ and $\J=2$, the condition becomes $\dsub{1}{1} \mid \dsub{1}{1}(\dsub{2}{2} + \dsub{3}{2})$, which is always true. Such a degenerate case provides no useful information about any of the primes, so we wish to exclude it. This motivates the following definition.

\begin{definition}
    \label{def:involved}
    A prime $p\in \primes$ is said to be \emph{involved} in a constraint `$\dsub{i_0}{j_0} \mid \sum_{i\in I} d_i + \kappa$' if
    \begin{equation*}
        \sum_{\substack{i\in I\\ p \mid d_i}} d_i \not\equiv 0 \pmod{\dsub{i_0}{j_0}}.
    \end{equation*}
    That is, $p$ appears in the sum $\sum_{i\in I} d_i$ and its contribution to this sum does not vanish modulo $\dsub{i_0}{j_0}$. In particular, $p \neq \dsub{i_0}{j_0}$.
\end{definition}

\Cref{def:involved} is by no means the most general possible, but it is well adapted to the cases we will encounter.

\subsection{Triangular systems of constraints}
Our applications require bounding the number of vectors $\d \in (\pm \D)^{R}$ satisfying systems with many constraints. Handling arbitrary systems of constraints is often intractable, as constraints are non-linear divisibility conditions to very large, possibly distinct moduli $\dsub{i_0}{j_0}$. However, we can effectively derive meaningful bounds for systems of constraints that have a certain `triangular' structure. Specifically, after a suitable ordering of the variables, each variable is essentially determined by its corresponding constraint together with the preceding variables.

\begin{definition}
    \label{def:system}
    A \emph{triangular system with shift $\kappa$} on $\d$ is a sequence $C_1(\d), \ldots, C_{T}(\d)$ of constraints on $\d$ with shift $\kappa$ such that, for each $t\in \sinterval{{T}}$, there exists a prime $p_t$ that is involved in $C_t(\d)$ and absent from $C_1(\d), C_2(\d), \ldots, C_{t-1}(\d)$.
\end{definition}

Our main task in the forthcoming sections will be to exhibit such triangular systems of constraints. Once we achieve this, we will be able to bound the weighted number of solutions to these systems using the following lemma.

\begin{lemma}
    \label{lem:constraints}
    Let $1\leq T \leq R\leq 20 \K$. Let $B\geq 1$. Let $\XX\subset (\pm \D)^{R}$ be a set such that each $\d \in \XX$ satisfies a triangular system of $T$ constraints with shift $\kappa\in [-B, B]$ (thus, the system and its shift may depend on $\d$). Then
    \begin{equation*}
        \sum_{\d\in \XX} \prod_{p\mid d_1\cdots d_{R}} \frac1p \ll B \K^{O( \J R)} H_0^{-T/2}.
    \end{equation*}
\end{lemma}

The proof consists in simple iterated substitutions, but is quite heavy on the notational side. The key takeaway is that each constraint in a triangular system produces a saving of size $\sqrt{H_0}$.

\begin{proof}
    Let $\mathcal{C}$ denote the data consisting of:
    \begin{itemize}
        \item a sequence of signs $\sigma \in \{\pm1\}^{R}$;
        \item a partition $\Pi$ of $\sinterval{R} \times \intJ$;
        \item for each $t\in \sinterval{T}$, a set $I_t\subset \sinterval{R}$ which is a union of at most $10$ discrete intervals;
        \item for each $t\in \sinterval{T}$, a pair $(i_t,j_t)\in \sinterval{R} \times \intJ$;
        \item an integer $\kappa\in [-B, B]$;
        \item a function $f : \sinterval{T} \to \Pi$.
    \end{itemize}
    We define $\XX_{\mathcal{C}}$ to be the set of all $\d\in \XX$ such that
    \begin{itemize}
        \item $\mathrm{sign}(d_i) = \sigma_i$ for $i\in \sinterval{R}$;
        \item for all $(i,j), (i',j')\in \sinterval{R} \times \intJ$, $\dsub{i}{j} =\dsub{i'}{j'}$ iff $(i,j)$ and $(i',j')$ are in the same class in $\Pi$;
        \item for every $t\in \sinterval{T}$, the constraint $C_t(\d)$ defined by
              \begin{equation*}
                  \dsub{i_t}{j_t} \bigmid\  \sum_{i\in I_t} d_i + \kappa
              \end{equation*}
              is satisfied by $\d$;
        \item for $t\in \sinterval{T}$, the prime $d_{f(t)}$ is involved in $C_{t}(\d)$ but absent from $C_s(\d)$ for $s<t$.\footnote{For $\alpha\in \Pi$, we write $d_{\alpha}$ for the prime $\dsub{i}{j}$, where $(i,j)$ is any element of~$\alpha$; this is well-defined by construction.}
    \end{itemize}
    Thus, each $\d \in \XX$ is contained in $\XX_{\mathcal{C}}$ for at least one choice of $\mathcal{C}$ as above.

    We will show that, for each such choice of $\mathcal{C}$, we have
    \begin{equation}
        \label{eq:toproveinconstraints}
        \sum_{\d\in \XX_{\mathcal{C}}}  \prod_{p\mid d_1\cdots d_{R}} \frac1p \ll  \V^{R \J} H_0^{-T/2}.
    \end{equation}
    This is enough to prove \cref{lem:constraints}. Indeed, to bound the sum over $\d\in \XX$, it suffices to multiply the right-hand side of \cref{eq:toproveinconstraints} by the number of possibilities for $\mathcal{C}$. There are $2^{R}$ choices for $\sigma$. The number of partitions of $\sinterval{R} \times \intJ$ is $\leq (R\J)^{R \J}$. For $t\in \sinterval{T}$, since $I_t$ is a union of at most 10 discrete intervals, it is uniquely determined by 20 elements of $\sinterval{R}$. Thus, the number of choices for $(I_t, i_t, j_t)_{t\in \sinterval{T}}$ is $\leq (R^{20}R\J)^{T}$. There are $\leq 2B+1$ choices for $\kappa\in [-B, B]\cap \Z$. Any function $f :  \sinterval{T} \to \Pi$ induces a function $\sinterval{T} \to\sinterval{R} \times \intJ$ which uniquely determines $f$, so there are $\leq (R \J)^{T}$ possibilities for $f$. Therefore, assuming \cref{eq:toproveinconstraints}, we have
    \begin{equation*}
        \sum_{\d\in \XX} \prod_{p\mid d_1\cdots d_R} \frac1p \ll B \, 2^{R} (R \J)^{R \J+21T} \V^{R \J} H_0^{-T/2}.
    \end{equation*}
    By \cref{lem:primesets}, we have $\V^{\J} \ll \K$. Using $T \leq R \ll \K$, we can simplify the above to obtain
    \begin{equation*}
        \sum_{\d\in \XX} \prod_{p\mid d_1\cdots d_R} \frac1p \ll B \K^{O(\J R)} H_0^{-T/2},
    \end{equation*}
    as desired.

    It remains to prove \cref{eq:toproveinconstraints}. Fix $\mathcal{C} = (\sigma,\Pi, (I_t),(i_t),(j_t),\kappa,f)$ such that $\XX_{\mathcal{C}}$ is non-empty. Note that every class $\alpha$ of $\Pi$ is contained in $\sinterval{R} \times  \{j(\alpha)\}$ for some $j(\alpha)\in \intJ$, which is the unique integer such that $d_{\alpha} \in \primes_{\!j(\alpha)}$ for all $\d \in  \XX_{\mathcal{C}}$.

    Any $\d \in \XX_{\mathcal{C}}$ is uniquely determined by the sequence of primes $(d_{\alpha})_{\alpha\in \Pi}$.

    We define a nested sequence $(\Pi_t)_{0\leq t\leq T}$ of subsets of $\Pi$ that reveals the classes $f(t)\in \Pi$ one at a time, by setting
    \begin{equation*}
        \Pi_0 := \Pi\setminus \{f(t) : t\in \sinterval{T}\} \quad\text{and, for $t\in \sinterval{T}$,}\quad \Pi_t := \Pi_{t-1} \cup \{f(t)\}.
    \end{equation*}

    Let $W_0$ be the set of all sequences $\bm{p}_0 := (p_{\alpha})_{\alpha\in \Pi_0}$ with $p_{\alpha}\in \primes_{\!j(\alpha)}$ for all $\alpha\in \Pi_0$.

    For any $t\in \sinterval{T}$ and any sequence of primes $\bm{p}_{<t} := (p_{\alpha})_{\alpha\in \Pi_{t-1}}$, we define $W_t\big[ \bm{p}_{<t} \big]$ to be the set of all primes ${p\in \primes_{\!j(f(t))}}$ for which there is some $\d\in \XX_{\mathcal{C}}$ such that $d_{\alpha} = p_{\alpha}$ for all $\alpha\in \Pi_{t-1}$ and $d_{f(t)} = p$.

    Then, we have
    \begin{equation}
        \label{eq:nestedsums}
        \sum_{\d\in \XX_{\mathcal{C}}} \prod_{p\mid d_1\cdots d_R} \frac1p =
        \sum_{\bm{p}_0 \in W_0} \bigg(\prod_{\alpha\in \Pi_0} \frac{1}{p_{\alpha}} \bigg) \sum_{p_{f(1)}\in W_1} \frac{1}{p_{f(1)}}
        \sum_{p_{f(2)}\in W_2} \frac{1}{p_{f(2)}} \cdots \sum_{p_{f(T)}\in W_{T}} \frac{1}{p_{f(T)}},
    \end{equation}
    writing $W_t$ instead of $W_t\big[ \bm{p}_{<t}\big]$ to shorten notation.

    Fix some $t\in \sinterval{T}$ and some sequence $\bm{p}_{<t} = (p_{\alpha})_{\alpha\in \Pi_{t-1}}$. We claim that
    \begin{equation}
        \label{eq:sumwithcond}
        \sum_{p\in W_t} \frac{1}{p} \ll \frac{\log H}{H_0}.
    \end{equation}
    Recall that, for any $p\in W_t$, there is some $\d\in \XX_{\mathcal{C}}$ with $d_{\alpha} = p_{\alpha}$ for all $\alpha\in \Pi_{t-1}$ and $d_{f(t)} = p$.

    In particular, $p$ is involved in the constraint $C_t(\d)$ defined earlier, which means that
    \begin{equation}
        \label{eq:divcondAB}
        \dsub{i_t}{j_t} \mid A p + B + \kappa
    \end{equation}
    where
    \begin{equation*}
        A := \frac{1}{p}\sum_{\substack{i\in I_t\\ p \mid d_i}} d_i \not\equiv 0 \pmod{\dsub{i_t}{j_t}}\qquad\text{and}\qquad B := \sum_{\substack{i\in I_t\\ p \nmid d_i}} d_i.
    \end{equation*}
    Observe that $A$ and $B$ are explicit expressions in the primes $\bm{p}_{<t}$. Indeed, $f(t)$ is of the form ${f(t) = I_t' \times \{j(f(t))\}}$ for some $I_t'\subset \sinterval{R}$, and we may rewrite
    \begin{equation*}
        A = \sum_{i\in I_t \cap I_t'}\, \prod_{\substack{j\in \intJ\setminus \{j(f(t))\}}} \dsub{i}{j} \qquad\text{and}\qquad B = \sum_{i\in I_t\setminus I_t'} \prod_{j\in \intJ} \dsub{i}{j}.
    \end{equation*}
    By definition of $f(t)$ and $I_t'$, the prime $p = d_{f(t)}$ does not appear in $A$ or $B$. Furthermore, by construction, the primes $d_{f(t+1)} ,\ldots, d_{f(T)}$ are absent from $C_t(\d)$, which means that $d_{f(t+1)} ,\ldots, d_{f(T)}$ cannot be any of the primes $\dsub{i}{j}$ occurring in $A$ or $B$ either. Hence, $A$ and $B$ are fully determined by the primes $\bm{p}_{<t} = (p_{\alpha})_{\alpha\in \Pi_{t-1}}$.

    Thus, this divisibility condition \cref{eq:divcondAB} uniquely determines the congruence class of $p$ modulo $\dsub{i_t}{j_t}$. Using that $\primes \subset (H_0, H)$, we have, for any $x$,
    \begin{equation*}
        \sum_{\substack{p\in \primes \\ p \equiv x\spmod{\dsub{i_t}{j_t}}}} \frac{1}{p} \leq \sum_{\substack{1\leq n\leq H \\ n\equiv 1 \spmod{\lfloor H_0\rfloor }}} \frac{1}{n} \ll \frac{\log H}{H_0},
    \end{equation*}
    which proves \cref{eq:sumwithcond}.

    Applying \cref{eq:sumwithcond} in \cref{eq:nestedsums} successively for $t = T, \,T-1,\,\ldots, \, 1$ yields
    \begin{equation*}
        \sum_{\d\in \XX_{\mathcal{C}}} \prod_{p\mid d_1\cdots d_R} \frac1p \leq \left(\frac{O(\log H)}{H_0}\right)^{T} \sum_{\bm{p}_0\in W_0} \prod_{\alpha\in \Pi_0} \frac{1}{p_{\alpha}} \ll H_0^{-T/2} \V^{\abs{\Pi_0}}.
    \end{equation*}
    Equation \cref{eq:toproveinconstraints} follows, which concludes the proof of \cref{lem:constraints}.
\end{proof}

\section{Unpredictable walks}
\label{sec:unpredictable}

The goal of this section is to prove the following proposition, which states that the contribution of tuples $\d$ arising from unpredictable words is negligible.

\begin{restatable}{proposition}{unpredcontrib}
    \label{prop:unpredictable}
    Let $\s$, $\lit$, $\unlit$ be sets such that $\intK\times\intJ = \s\sqcup\lit\sqcup\unlit$ and $|\unlit| \leq\K^{2\epsone}$. Let $1\leq r\leq \K$. Then
    \begin{equation*}
        \sum_{\d\in \unpred}\,\prod_{p\mid \pall}\, \frac{1}{p} \ll 1.
    \end{equation*}
\end{restatable}

Our strategy is as follows. Whenever a prime is repeated at lit indices, part~\cref{item:Dprop2} of \cref{def:D} yields a divisibility condition on $\d$. These conditions restrict the possibilities for $\d$: generically, we might hope to win a factor $\approx H_0$ from each such condition, which would be more than sufficient. Unfortunately, the multitude of dependencies between these conditions make it hard to rule out highly degenerate systems.

However, since $H_0$ is much larger than~$\K$, it is enough to win a moderate number of factors $H_0$ to beat the trivial bound of \cref{lem:trivialbound}. To do so, we extract from the original system of lit conditions a subset that is trivially non-singular, namely a \emph{triangular system}. The existence of such a triangular system will ultimately follow from the unpredictability of the words associated to $\d\in \unpred$.

\subsection{Structure of unpredictable words}
Our intermediate aim is to prove \cref{prop:structureunpred}, which states that $t$-unpredictable words must contain one of several special patterns. These patterns will allow us to extract large triangular systems from the divisibility conditions associated to the lit indices, when $\d\in \unpred$.

Recall that $\word$ denotes the set of words on the alphabet $\mathcal{A}$ with no two consecutive equal letters, and $\word^{\neq}\subset \word$ is the set of words with distinct letters.

\begin{definition}
    \label{def:repetitions}
    A word $w\in \word$ contains $n$ \emph{separated repetitions} if it has a substring of the form
    \begin{equation*}
        \upletters{A\makesubscript{1}\ldots A\makesubscript{1}\ldots A\makesubscript{2}\ldots A\makesubscript{2}\ldots$\cdots$\ldots A\makesubscript{n}\ldots A\makesubscript{n}},
    \end{equation*}
    for some non-necessarily distinct letters \upletters{A\makesubscript{1}$,\ldots,$A\makesubscript{n}}. The three dots \upletters{\ldots} represent a string of letters of arbitrary length (possibly empty). In other words, there are $k_1<l_1<k_2<...<k_n<l_n$ such that $w[k_i]=w[l_i]$ for all $i$.
\end{definition}

\begin{lemma}
    \label{lem:unpredstep0}
    Let $m \geq 10$. Let $k_1<k_2<\ldots<k_{m}$ be positive integers. Let $w\in \word$ be a word of length $\geq k_{m}$. Then, either $w$ contains $\gg m^{1/2}$ separated repetitions, or there are $i,j\in \sinterval{m}$ with $j-i \gg m^{1/2}$ such that the substring
    \begin{equation*}
        w[k_i]w[k_{i}+1] \cdots w[k_j]
    \end{equation*}
    of $w$ has distinct letters.
\end{lemma}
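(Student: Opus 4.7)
The plan is to argue by contrapositive: I will assume the second alternative fails, meaning that for some absolute constant $c > 0$, every substring $w[k_i] w[k_i+1]\cdots w[k_j]$ with $j - i \geq c \thisM^{1/2}$ contains a repeated letter. From this hypothesis I will extract $\gg \thisM^{1/2}$ separated repetitions in $w$ by a direct pigeonhole argument.

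Concretely, I will set $B := \lceil c \thisM^{1/2} \rceil + 1$ and partition the index set $\sinterval{\thisM}$ into consecutive disjoint blocks $G_1 < G_2 < \cdots < G_N$, each of size exactly $B$ (discarding a possibly smaller leftover at the end). Because $\thisM \geq 10$, this yields $N = \lfloor \thisM/B \rfloor \gg \thisM^{1/2}$ blocks. For each block $G_l = \interval{a_l}{b_l}$ I have $b_l - a_l = B - 1 \geq c \thisM^{1/2}$, so the standing hypothesis supplies positions $p_l < q_l$ lying in $\interval{k_{a_l}}{k_{b_l}}$ with $w[p_l] = w[q_l]$.

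It remains to check that these $N$ pairs genuinely witness separated repetitions, i.e.\ that they interleave correctly in $w$. Because the blocks are strictly disjoint and in order, $b_l < a_{l+1}$, so $q_l \leq k_{b_l} < k_{a_{l+1}} \leq p_{l+1}$ for every $l$. Hence $p_1 < q_1 < p_2 < q_2 < \cdots < p_N < q_N$ with $w[p_l] = w[q_l]$, which is exactly $N \gg \thisM^{1/2}$ separated repetitions as in \cref{def:repetitions}.

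I do not foresee a substantive obstacle: the proof is a one-step pigeonhole. The only subtlety is to make the blocks strictly disjoint (rather than sharing an endpoint), so that the inequality between consecutive pairs is the strict $q_l < p_{l+1}$ required by the definition of separated repetitions, not merely $q_l \leq p_{l+1}$.
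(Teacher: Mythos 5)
Your proof is correct and is essentially the paper's own argument: both partition the index set $\sinterval{\thisM}$ into $\Theta(\thisM^{1/2})$ disjoint consecutive blocks of length $\Theta(\thisM^{1/2})$, apply the failure of the second alternative to each block to extract one repetition, and observe that the block structure forces the $\thisM^{1/2}$-many resulting pairs to interleave as required by the definition of separated repetitions. The paper simply hard-codes $n=\lfloor \thisM^{1/2}\rfloor$ as both the block size and block count rather than tracking the constant $c$ explicitly.
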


\begin{proof}
    Let $n = \lfloor m^{1/2}\rfloor$. If the second conclusion does not hold, there must be a repeated letter in the substring $w[k_{rn+1}]w[k_{rn+1}+1]\cdots w[k_{(r+1)n}]$, for each $0\leq r\leq n-1$. This implies that $w$ contains $n$ separated repetitions.
\end{proof}

\begin{lemma}
    \label{lem:unpredstep1}
    Let $\upletters{A}, \upletters{B}, \upletters{C}\in \mathcal{A}$. Let $w_1, w_2\in \word^{\neq}$ be two words of the form
    \begin{equation*}
        \upletters{A\ldots B\ldots C}.
    \end{equation*}
    Suppose that \upletters{B} has variable neighbours in the concatenation $w_1w_2$.\footnote{This just means that the two letters adjacent to \upletters{B} in $w_1$ are not the same as the two letters adjacent to \upletters{B} in $w_2$}

    Then, there exist substrings $v_1 \sqsubset w_1$ and $v_2 \sqsubset w_2$, both of the form $\upletters{A\ldots Y}$ for some letter \upletters{Y} (possibly equal to \upletters{B} or \upletters{C}), with distinct sets of letters, i.e.~$\letters{v_1} \neq \letters{v_2}$.
\end{lemma}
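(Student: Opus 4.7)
My plan is a short case analysis centred on the prefixes of $w_1$ and $w_2$ up to and including the letter $\upletters{B}$. Let $\pi_i$ denote the position of $\upletters{B}$ in $w_i$, let $p_i$ be the length-$\pi_i$ prefix of $w_i$, and set $L_i := \letters{p_i}$. Write $\upletters{X}_i, \upletters{Z}_i$ for the letters immediately before and after $\upletters{B}$ in $w_i$; the variable-neighbours hypothesis says $\{\upletters{X}_1, \upletters{Z}_1\} \neq \{\upletters{X}_2, \upletters{Z}_2\}$, so after swapping $w_1 \leftrightarrow w_2$ I may fix $\upletters{U} \in \{\upletters{X}_1, \upletters{Z}_1\} \setminus \{\upletters{X}_2, \upletters{Z}_2\}$.

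If $L_1 \neq L_2$ I take $\upletters{Y} = \upletters{B}$ and $v_i = p_i$: both are prefixes starting at $\upletters{A}$ and ending at $\upletters{B}$, and by construction they have different letter sets. The substantive case is therefore $L_1 = L_2 =: L$, which, since the letters within each $w_i$ are distinct, forces $\pi_1 = \pi_2 =: \pi$ (via $|L_i| = \pi_i$).

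Suppose first $\upletters{U} = \upletters{X}_1$. Then $\upletters{X}_1 \in L$, while $\upletters{X}_1 \neq \upletters{X}_2$; moreover $\upletters{X}_1 \neq \upletters{Z}_2$, since $\upletters{Z}_2$ sits at position $\pi + 1$ of $w_2$ and hence lies outside $L$, whereas $\upletters{X}_1 \in L$. It follows that $\upletters{X}_1$ appears in $w_2$ at some position $\pi_2' \le \pi - 2$. Taking $\upletters{Y} = \upletters{X}_1$, $v_1$ the length-$(\pi-1)$ prefix of $w_1$, and $v_2$ the length-$\pi_2'$ prefix of $w_2$, I obtain the required substrings: both end at $\upletters{X}_1$, and their lengths differ, so their letter sets differ too.

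Suppose instead $\upletters{U} = \upletters{Z}_1$. Then $\upletters{Z}_1 \notin L$ (as $\upletters{Z}_1$ occupies position $\pi + 1$ of $w_1$) and $\upletters{Z}_1 \neq \upletters{Z}_2$. If $\upletters{Z}_1 \notin \letters{w_2}$, I take $\upletters{Y} = \upletters{C}$ and $v_i = w_i$; then $\upletters{Z}_1 \in \letters{v_1} \setminus \letters{v_2}$. Otherwise $\upletters{Z}_1$ must sit at some position $\sigma > \pi$ of $w_2$; I then take $\upletters{Y} = \upletters{Z}_1$, $v_1$ the length-$(\pi+1)$ prefix of $w_1$, and $v_2$ the length-$\sigma$ prefix of $w_2$. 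One checks $\letters{v_1} = L \cup \{\upletters{Z}_1\}$, while $v_2$ necessarily passes through $\upletters{Z}_2$ at position $\pi + 1 < \sigma$, and $\upletters{Z}_2 \neq \upletters{Z}_1$ together with $\upletters{Z}_2 \notin L$ yields $\upletters{Z}_2 \in \letters{v_2} \setminus \letters{v_1}$. The principal idea is the initial bifurcation on whether $L_1 = L_2$; once that equality forces $\pi_1 = \pi_2$, the location of the distinguishing neighbour $\upletters{U}$ in the other word is pinned down enough to lengthen or shorten the prefixes and separate them by a concrete extra letter. The only delicate point is the last subcase, where one must notice that reaching $\upletters{Z}_1$ inside $w_2$ forces the prefix to contain $\upletters{Z}_2$, which is the letter that witnesses the asymmetry.
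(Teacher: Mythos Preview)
Your proof is correct, but it takes a more laborious route than the paper's. The paper bifurcates on whether the \emph{full} letter sets $\letters{w_1}$ and $\letters{w_2}$ agree (taking $v_i = w_i$, $\upletters{Y}=\upletters{C}$ if not), and in the equal case simply picks the minimal position $k$ where $w_1[k]\neq w_2[k]$, sets $\upletters{Y}:=w_2[k]$, locates $\upletters{Y}$ in $w_1$ at some position $l>k$, and takes $v_1=w_1[1..l]$, $v_2=w_2[1..k]$; then $w_1[k]\in\letters{v_1}\setminus\letters{v_2}$. The variable-neighbours hypothesis is used only to guarantee $w_1\neq w_2$ so that $k$ exists --- the position of $\upletters{B}$ plays no further role.

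Your approach instead pivots on the prefixes up to $\upletters{B}$ and then branches on whether the distinguishing neighbour $U$ lies before or after $\upletters{B}$, which costs an extra layer of casework (including the sub-split on whether $\upletters{Z}_1\in\letters{w_2}$). Both arguments ultimately produce prefixes of different lengths in the hard case, so the separation could equally be read off from $|\letters{v_1}|\neq|\letters{v_2}|$; the paper's version just gets there with less bookkeeping. One minor point: in your last subcase you assert $\pi+1<\sigma$ without comment --- this is fine since $\sigma=\pi+1$ would force $\upletters{Z}_1=\upletters{Z}_2$, but it might be worth a word.
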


\begin{proof}
    If $\letters{w_1} \neq \letters{w_2}$, we can just take $v_1 := w_1$, $v_2 := w_2$ and $\upletters{Y} := \upletters{C}$.

    Otherwise, $w_1$ and $w_2$ have the same sets of letters, and thus the same length as $w_1, w_2\in \word^{\neq}$. Let $k\geq 2$ be minimal such that $w_1[k] \neq w_2[k]$. We know that $k$ exists, since $w_1\neq w_2$. Let $\upletters{X} := w_1[k]$ and $\upletters{Y} := w_2[k]$. The letter \upletters{Y} is present in $w_1$ as both words have the same letters. By minimality of $k$, we must have $\upletters{Y} = w_1[l]$ for some $l>k$. Set ${v_1 := w_1[1]w_1[2]\cdots w_1[l] = \upletters{A\ldots X\ldots Y}}$ and $v_2 := w_2[1]w_2[2]\cdots w_2[k] = \upletters{A\ldots Y}$. Then $\letters{v_1}\neq \letters{v_2}$ as $\upletters{X} \in \letters{v_1}\setminus \letters{v_2}$.
\end{proof}

\begin{notation}
    \label{not:substring}
    Let $w\in \word^{\neq}$, and suppose that $w$ is of the form $\upletters{A\ldots X\ldots Y\ldots B}$. We write $w|_{\upletters{X\ldots Y}}$ for the unique substring of $w$ of the form $\upletters{X\ldots Y}$. This is well-defined as $w$ has distinct letters.
\end{notation}

\begin{lemma}
    \label{lem:unpredstep2}
    Let $m\geq 1$. Let $\upletters{A\makesubscript{0}}, \ldots, \upletters{A\makesubscript{2m}}\in \mathcal{A}$. Let $w_1,w_2\in \word^{\neq}$ be two words of the form
    \begin{equation*}
        \upletters{A\makesubscript{0}\ldots A\makesubscript{1}\ldots A\makesubscript{2}\ldots$\cdots$\ldots A\makesubscript{2m}}
    \end{equation*}
    such that, for all $1\leq i\leq 2m-1$, the letter \upletters{A\makesubscript{i}} has variable neighbours in the concatenation $w_1w_2$.

    Then, there are substrings $v_1 \sqsubset w_1$ and $v_2 \sqsubset w_2$, both of the form
    \begin{equation*}
        \upletters{Y\makesubscript{0}\ldots Y\makesubscript{1}\ldots Y\makesubscript{2}\ldots$\cdots$\ldots Y\makesubscript{m}},
    \end{equation*}
    for some letters $\upletters{Y\makesubscript{0}},\ldots,\upletters{Y\makesubscript{m}}$ (possibly equal to some of the $\upletters{A\makesubscript{i}}$) such that, for all $j\in \sinterval{m}$, the sets of letters of $v_1|_{\upletters{Y\makesubscript{j\!-\!1}\ldots Y\makesubscript{j}}}$ and $v_2|_{\upletters{Y\makesubscript{j\!-\!1}\ldots Y\makesubscript{j}}}$ are distinct.
\end{lemma}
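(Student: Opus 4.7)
My plan is to build the landmarks $Y_0, \ldots, Y_m$ and the required substrings inductively, by iterating Lemma~\ref{lem:unpredstep1} once per segment. I would set $Y_0 := A_0$ and at the $j$-th step use $A_{2j-1}$ in the role of ``$B$'' and $A_{2j}$ in the role of ``$C$''. The key invariant I would maintain is that $Y_{j-1}$ lies strictly before $A_{2j-1}$ in both $w_1$ and $w_2$; this holds at $j=1$ by hypothesis.

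For the inductive step I would consider the substrings $\widetilde{w}_i := w_i|_{Y_{j-1} \ldots A_{2j}}$, both of the form $Y_{j-1} \ldots A_{2j-1} \ldots A_{2j}$. Since each $w_i \in \word^{\neq}$, the letter $A_{2j-1}$ has a unique occurrence in $\widetilde{w}_i$, lying in its interior, so the neighbours of $A_{2j-1}$ in $\widetilde{w}_i$ coincide with those in $w_i$. Hence $A_{2j-1}$ has variable neighbours in the concatenation $\widetilde{w}_1 \widetilde{w}_2$. Applying Lemma~\ref{lem:unpredstep1} with $A = Y_{j-1}$, $B = A_{2j-1}$, $C = A_{2j}$ then produces substrings $u_i^{(j)} \sqsubset \widetilde{w}_i \sqsubset w_i$, both of the form $Y_{j-1} \ldots Y_j$ for a common letter $Y_j$, with $\letters{u_1^{(j)}} \neq \letters{u_2^{(j)}}$.

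I would then verify the invariant for the next iteration. From the proof of Lemma~\ref{lem:unpredstep1} one has $Y \neq A$, so $Y_j \neq Y_{j-1}$ and $Y_j$ occurs strictly after $Y_{j-1}$ in each $w_i$; and since $u_i^{(j)} \sqsubset w_i|_{Y_{j-1} \ldots A_{2j}}$, the position of $Y_j$ in $w_i$ is at most that of $A_{2j}$, so in particular strictly before $A_{2j+1}$. After $m$ iterations I would set $v_i := w_i|_{Y_0 \ldots Y_m}$: the strict ordering of the landmark positions shows that $v_i$ has the prescribed form $Y_0 \ldots Y_1 \ldots \ldots Y_m$, and because $v_i \in \word^{\neq}$ the substring $v_i|_{Y_{j-1} \ldots Y_j}$ is uniquely determined and equals $u_i^{(j)}$, transferring the inequality $\letters{u_1^{(j)}} \neq \letters{u_2^{(j)}}$ to the desired inequality on $v_i|_{Y_{j-1} \ldots Y_j}$.

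I do not expect a real obstacle: the whole content is to chain Lemma~\ref{lem:unpredstep1} together. The one point to be careful about is to begin each iteration at the common endpoint $Y_{j-1}$ produced by the previous step, rather than at the shifted original landmark $A_{2j-2}$; otherwise the local contrast substrings $u_i^{(j)}$ would not concatenate into global substrings $v_i$ of $w_i$ with the prescribed $(Y_j)$-structure. The resulting compression from $2m+1$ original landmarks down to $m+1$ output landmarks is exactly dictated by the need, at every application of Lemma~\ref{lem:unpredstep1}, to have an intermediate letter with variable neighbours strictly between the endpoints of each $\widetilde{w}_i$.
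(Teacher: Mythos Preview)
Your proposal is correct and follows essentially the same approach as the paper's proof: both set $Y_0 := A_0$ and iterate Lemma~\ref{lem:unpredstep1} $m$ times, at step $j$ applying it to the substrings $w_i|_{Y_{j-1}\ldots A_{2j-1}\ldots A_{2j}}$ to produce the next landmark $Y_j$. Your write-up is in fact a bit more careful than the paper's in explicitly verifying the invariant that $Y_{j-1}$ lies strictly before $A_{2j-1}$ and that the neighbours of $A_{2j-1}$ in the restricted words agree with those in $w_i$.
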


\begin{proof}
    Set $\upletters{Y\makesubscript{0}} := \upletters{A\makesubscript{0}}$. Let $w_1^{1} := w_1|_{\upletters{Y\makesubscript{0}\ldots A\makesubscript{1}\ldots A\makesubscript{2}}}$  and $w_2^1 := w_2|_{\upletters{Y\makesubscript{0}\ldots A\makesubscript{1}\ldots A\makesubscript{2}}}$. Applying \cref{lem:unpredstep1} to these words $w_1^{1}$ and $w_2^{1}$, we find two further substrings $v_1^1 \sqsubset w_1^1$ and $v_2^1 \sqsubset w_2^1$ of the form $\upletters{Y\makesubscript{0}\ldots Y\makesubscript{1}}$ for some common ending letter $\upletters{Y\makesubscript{1}}$, such that $\letters{v_1^1} \neq \letters{v_2^1}$. Notice that $w_1$ and $w_2$ are of the form
    \begin{equation*}
        \upletters{Y\makesubscript{0}\ldots Y\makesubscript{1}\ldots A\makesubscript{3}\ldots A\makesubscript{4}\ldots$\cdots$\ldots A\makesubscript{2m}}.
    \end{equation*}
    We may thus define the substrings $w_1^2 := w_1|_{\upletters{Y\makesubscript{1}\ldots A\makesubscript{3}\ldots A\makesubscript{4}}}$ and $w_2^2 := w_2|_{\upletters{Y\makesubscript{1}\ldots A\makesubscript{3}\ldots A\makesubscript{4}}}$. Applying \cref{lem:unpredstep1} again with $w_1^2$ and $w_2^2$, we obtain two substrings $v_1^2 \sqsubset w_1^2$ and $v_2^2 \sqsubset w_2^2$ of the form $\upletters{Y\makesubscript{1}\ldots Y\makesubscript{2}}$, with $\letters{v_1^2} \neq \letters{v_2^2}$. In particular, $w_1$ and $w_2$ can now be written as
    \begin{equation*}
        \upletters{Y\makesubscript{0}\ldots Y\makesubscript{1}\ldots Y\makesubscript{2}\ldots A\makesubscript{5}\ldots A\makesubscript{6}\ldots$\cdots$\ldots A\makesubscript{2m}}.
    \end{equation*}
    We can repeat this process; after $m$ applications of \cref{lem:unpredstep1}, we obtain substrings of $w_1$ and $w_2$ of the form $\upletters{Y\makesubscript{0}\ldots Y\makesubscript{1}\ldots Y\makesubscript{2}\ldots$\cdots$\ldots Y\makesubscript{m}}$ with the required properties.
\end{proof}

\begin{lemma}
    \label{lem:unpredstep3}
    Let $m\geq 1$. Let $\upletters{Y\makesubscript{0}}, \ldots, \upletters{Y\makesubscript{4m}}\in \mathcal{A}$. Let $w_1, w_2\in \word^{\neq}$ be two words of the form
    \begin{equation*}
        \upletters{Y\makesubscript{0}\ldots Y\makesubscript{1}\ldots Y\makesubscript{2}\ldots$\cdots$\ldots Y\makesubscript{4m}}.
    \end{equation*}
    Suppose that, for all $j\in \sinterval{4m}$, the words $w_1|_{\upletters{Y\makesubscript{j\!-\!1}\ldots Y\makesubscript{j}}}$ and $w_2|_{\upletters{Y\makesubscript{j\!-\!1}\ldots Y\makesubscript{j}}}$ have distinct sets of letters.

    Then, there is a pair of words $(w_1', w_2') \in \{(w_1, w_2), (w_2, w_1), (\overleftarrow{w_1}, \overleftarrow{w_2}), (\overleftarrow{w_2}, \overleftarrow{w_1})\}$ with the following properties.

    There are letters $\upletters{X\makesubscript{1}},\ldots,\upletters{X\makesubscript{m}},\upletters{Z\makesubscript{0}},\upletters{Z\makesubscript{1}},\ldots,\upletters{Z\makesubscript{m}},\upletters{Z\makesubscript{m\!+\!1}}$ (possibly equal to some of the $\upletters{Y\makesubscript{j}}$) such that $w_1'$ is of the form
    \begin{equation*}
        \upletters{Z\makesubscript{0}\ldots X\makesubscript{1}\ldots Z\makesubscript{1}\ldots X\makesubscript{2}\ldots Z\makesubscript{2}\ldots$\cdots$\ldots Z\makesubscript{m\!-\!1}\ldots X\makesubscript{m}\ldots Z\makesubscript{m}\ldots Z\makesubscript{m\!+\!1}},
    \end{equation*}
    $w_2'$ is of the form
    \begin{equation*}
        \upletters{Z\makesubscript{0}\ldots Z\makesubscript{1}\ldots Z\makesubscript{2}\ldots$\cdots$\ldots Z\makesubscript{m}\ldots Z\makesubscript{m\!+\!1}},
    \end{equation*}
    and, for all $j\in \sinterval{m}$, the letter \upletters{X\makesubscript{j}} does not appear in $w_2'|_{\upletters{Z\makesubscript{0}\ldots Z\makesubscript{j}}}$.
\end{lemma}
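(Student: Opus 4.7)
The plan is to extract one distinguishing letter from each of the $4m$ gaps, then use a four-fold pigeonhole to reduce to a clean uniform situation, absorbing the four sub-cases into the choice of $(w_1', w_2')$ among the four allowed pairs.

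\textbf{Step 1 (witnesses and case analysis).} For each $j\in\sinterval{4m}$, since $\letters{w_1|_{\upletters{Y\makesubscript{j\!-\!1}\ldots Y\makesubscript{j}}}}\neq\letters{w_2|_{\upletters{Y\makesubscript{j\!-\!1}\ldots Y\makesubscript{j}}}}$, I pick a witness letter $A_j$ lying in the symmetric difference. Then $A_j$ falls into one of four mutually exclusive buckets indexed by (a) whether $A_j$ lies in the substring of $w_1$ or of $w_2$, and (b) whether in the \emph{other} word, $A_j$ appears strictly before $Y_{j-1}$ or does not (the second option including ``doesn't appear at all'' and ``appears strictly after $Y_j$''). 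Note $A_j\neq Y_{j-1},Y_j$, since those two letters lie in both substrings.

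\textbf{Step 2 (choice of $(w_1',w_2')$).} By pigeonhole at least $m$ of the indices $j$ belong to a common bucket. In each bucket I choose $(w_1',w_2')$ from the four allowed pairs so that the uniform statement ``$A_j\in\letters{w_1'|_{\upletters{Y\makesubscript{j\!-\!1}\ldots Y\makesubscript{j}}}}$, and $A_j$ does not appear in $w_2'$ at or before the position of $Y_j$'' holds. Concretely, the pair $(w_1,w_2)$ or $(w_2,w_1)$ handles the two ``does-not-appear-before'' buckets, and reversing both (introducing the overline versions) swaps the roles of ``before $Y_{j-1}$'' and ``after $Y_j$'' once one relabels the anchor letters as $Y'_k:=Y_{4m-k}$. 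Thus after Step 2 I have $m$ indices $j_1<\cdots<j_m$ in $\sinterval{4m}$ (with respect to the anchor labeling of $(w_1',w_2')$) for which the uniform condition holds.

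\textbf{Step 3 (constructing the $X_s$ and $Z_s$ and verification).} Set $X_s:=A_{j_s}$ for $s\in\sinterval{m}$, $Z_0:=Y_0$, $Z_s:=Y_{j_s}$ for $s\in\sinterval{m}$, and $Z_{m+1}:=Y_{4m}$ (with respect to the anchor labeling of $(w_1',w_2')$). Then the $Z_s$ appear in order in both $w_1'$ and $w_2'$ (they are a subsequence of the anchor letters), and $X_s$ lies in $w_1'|_{\upletters{Y\makesubscript{j\makesubscript{s}\!-\!1}\ldots Y\makesubscript{j\makesubscript{s}}}}\subset w_1'|_{\upletters{Z\makesubscript{s\!-\!1}\ldots Z\makesubscript{s}}}$, while the uniform condition of Step 2 is exactly $X_s\notin w_2'|_{\upletters{Z\makesubscript{0}\ldots Z\makesubscript{s}}}$. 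Finally, all the named letters are distinct: the $Z_s$ because the anchors are distinct in $\word^{\neq}$; each $X_s$ differs from every $Z_r$ because $X_s=A_{j_s}$ lies strictly between the positions of $Y_{j_s-1}$ and $Y_{j_s}$ in $w_1'$ while $Y_{j_r}$ does not; and the $X_s$ are pairwise distinct because the substrings $w_1'|_{\upletters{Y\makesubscript{j\makesubscript{s}\!-\!1}\ldots Y\makesubscript{j\makesubscript{s}}}}$ have pairwise disjoint interiors inside the injective word $w_1'$.

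The main obstacle is the bookkeeping across the reversal operation: one must verify that relabeling $Y'_k:=Y_{4m-k}$ really does turn the ``before $Y_{j-1}$'' case into an ``after $Y'_k$'' case with $k=4m-j+1$, and that the hypothesis on the symmetric difference is preserved (which is immediate, as $\overline{w_i}|_{\upletters{Y'\makesubscript{k\!-\!1}\ldots Y'\makesubscript{k}}}$ has the same letter set as $w_i|_{\upletters{Y\makesubscript{j\!-\!1}\ldots Y\makesubscript{j}}}$). Once that symmetry is laid out, the proof reduces to the straightforward pigeonhole and verification above.
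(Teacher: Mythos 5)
Your proof is correct and is essentially the same argument as the paper's: the paper performs the same case analysis as a two-step binary pigeonhole (first splitting into $J_1$ vs.\ $J_2$ according to which word holds the witness, then $J_1^<$ vs.\ $J_1^>$ according to whether the witness appears before or after the gap), whereas you package it as a single four-way pigeonhole on the pair (witness source, witness position); the arithmetic and the final construction of the $X_j$ and $Z_j$ are identical. Your additional distinctness verification at the end of Step 3 is harmless but redundant, since $w_1'\in\word^{\neq}$ already forces the letters in the displayed pattern to be pairwise distinct.
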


\begin{proof}
    Let $J_1$ be the set of all $j\in \sinterval{4m}$ such that $w_1|_{\upletters{Y\makesubscript{j\!-\!1}\ldots Y\makesubscript{j}}}$ contains a letter not appearing in $w_2|_{\upletters{Y\makesubscript{j\!-\!1}\ldots Y\makesubscript{j}}}$. Similarly, let $J_2$ be the set of all $j\in \sinterval{4m}$ such that $w_2|_{\upletters{Y\makesubscript{j\!-\!1}\ldots Y\makesubscript{j}}}$ has a letter that is not present in $w_1|_{\upletters{Y\makesubscript{j\!-\!1}\ldots Y\makesubscript{j}}}$. By assumption, $J_1 \cup J_2 = \sinterval{4m}$, so one of $J_1$ and $J_2$ has size $\geq 2m$. Without loss of generality, assume that $\abs{J_1} \geq 2m$, swapping $w_1$ and $w_2$ if necessary.

    Let $j\in J_1$, and let $\upletters{X}$ be a letter present in $w_1|_{\upletters{Y\makesubscript{j\!-\!1}\ldots Y\makesubscript{j}}}$ but not in $w_2|_{\upletters{Y\makesubscript{j\!-\!1}\ldots Y\makesubscript{j}}}$. The letter \upletters{X} could possibly appear in $w_2|_{\upletters{Y\makesubscript{0}\ldots Y\makesubscript{j\!-\!1}}}$ or in $w_2|_{\upletters{Y\makesubscript{j}\ldots Y\makesubscript{4m}}}$, but not in both as $w_2 \in \word^{\neq}$.

    We define $J_1^<$ to be the set of all $j\in J_1$ for which there exists a letter $\upletters{X}$ present in $w_1|_{\upletters{Y\makesubscript{j\!-\!1}\ldots Y\makesubscript{j}}}$ but not in $w_2|_{\upletters{Y\makesubscript{0}\ldots Y\makesubscript{j}}}$. Similarly, we define $J_1^>$ to be the set of all $j\in J_1$ for which there exists a letter $\upletters{X}$ of $w_1|_{\upletters{Y\makesubscript{j\!-\!1}\ldots Y\makesubscript{j}}}$ not appearing in $w_2|_{\upletters{Y\makesubscript{j\!-\!1}\ldots Y\makesubscript{4m}}}$. By the previous observation, we have $J_1^<\cup J_1^> = J_1$, so one of $J_1^<$ and $J_1^>$ has size $\geq m$. Considering the reversed words if necessary, we may assume without loss of generality that $\abs{J_1^<} \geq m$.

    Let $j_1<j_2<\ldots<j_m$ be elements of $J_1^<$. For $i\in \sinterval{m}$, let $\upletters{X\makesubscript{i}}$ be a letter of $w_1|_{\upletters{Y\makesubscript{j_i\!-\!1}\ldots Y\makesubscript{j_i}}}$ not appearing in the substring $w_2|_{\upletters{Y\makesubscript{0}\ldots Y\makesubscript{j_i}}}$ of $w_2$. Then $w_1$ is of the form
    \begin{equation*}
        \upletters{Y\makesubscript{0}\ldots X\makesubscript{1}\ldots Y\makesubscript{j\makesubscript{1}}\ldots X\makesubscript{2}\ldots Y\makesubscript{j\makesubscript{2}}\ldots$\cdots$\ldots Y\makesubscript{j\makesubscript{m\!-\!1}}\ldots X\makesubscript{m}\ldots Y\makesubscript{j\makesubscript{m}}\ldots Y\makesubscript{4m}}.
    \end{equation*}
    The lemma follows, defining $\upletters{Z\makesubscript{0}} := \upletters{Y\makesubscript{0}}$, $\upletters{Z\makesubscript{m\!+\!1}} := \upletters{Y\makesubscript{4m}}$ and $\upletters{Z\makesubscript{i}} := \upletters{Y\makesubscript{j\makesubscript{i}}}$ for all $i\in \sinterval{m}$.
\end{proof}

Combining \cref{lem:unpredstep2} and \cref{lem:unpredstep3}, we immediately obtain the following.

\begin{lemma}
    \label{lem:unpredstep4}
    Let $m\geq 1$. Let $\upletters{A\makesubscript{0}}, \ldots, \upletters{A\makesubscript{8m}}\in \mathcal{A}$. Let $w_1,w_2\in \word^{\neq}$ be two words of the form
    \begin{equation*}
        \upletters{A\makesubscript{0}\ldots A\makesubscript{1}\ldots A\makesubscript{2}\ldots$\cdots$\ldots A\makesubscript{8m}}.
    \end{equation*}
    Suppose that, for $1\leq i\leq 8m-1$, the letter \upletters{A\makesubscript{i}} has variable neighbours in the concatenation $w_1w_2$.

    Then, after possibly replacing $(w_1, w_2)$ with an element of $\{(w_1, w_2), (w_2, w_1), (\overleftarrow{w_1}, \overleftarrow{w_2}), (\overleftarrow{w_2}, \overleftarrow{w_1})\}$, the following applies.

    For some letters $\upletters{X\makesubscript{1}},\ldots,\upletters{X\makesubscript{m}},\upletters{Z\makesubscript{0}},\upletters{Z\makesubscript{1}},\ldots,\upletters{Z\makesubscript{m}}$ (possibly equal to some of the $\upletters{A\makesubscript{j}}$), there are words $v_1\sqsubset w_1$ and $v_2 \sqsubset w_2$, with $v_1$ of the form
    \begin{equation*}
        \upletters{Z\makesubscript{0}\ldots X\makesubscript{1}\ldots Z\makesubscript{1}\ldots X\makesubscript{2}\ldots Z\makesubscript{2}\ldots$\cdots$\ldots Z\makesubscript{m\!-\!1}\ldots X\makesubscript{m}\ldots Z\makesubscript{m}}
    \end{equation*}
    and $v_2$ of the form
    \begin{equation*}
        \upletters{Z\makesubscript{0}\ldots Z\makesubscript{1}\ldots Z\makesubscript{2}\ldots$\cdots$\ldots Z\makesubscript{m}},
    \end{equation*}
    such that, for all $j\in \sinterval{m}$, the letter \upletters{X\makesubscript{j}} does not appear in the substring $v_2|_{\upletters{Z\makesubscript{0}\ldots Z\makesubscript{j}}}$.
\end{lemma}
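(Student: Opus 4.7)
The plan is simply to apply the two preceding lemmas in sequence, with the parameters matched so that the output of the first feeds exactly into the hypothesis of the second.

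First I would invoke \cref{lem:unpredstep2} on the pair $(w_1, w_2)$ with the parameter choice $m' := 4m$. The hypotheses match: we have $8m+1 = 2m'+1$ marked letters $\upletters{A\makesubscript{0}},\ldots,\upletters{A\makesubscript{8m}}$ appearing in order in both $w_1$ and $w_2$, and each intermediate letter $\upletters{A\makesubscript{i}}$ ($1 \le i \le 8m-1 = 2m'-1$) has variable neighbours in $w_1w_2$ by assumption. The conclusion produces substrings $v_1' \sqsubset w_1$ and $v_2' \sqsubset w_2$, both of the form $\upletters{Y\makesubscript{0}\ldots Y\makesubscript{1}\ldots$\cdots$\ldots Y\makesubscript{4m}}$ for some letters $\upletters{Y\makesubscript{0}},\ldots,\upletters{Y\makesubscript{4m}}$, such that for every $j \in \sinterval{4m}$ the segments $v_1'|_{\upletters{Y\makesubscript{j\!-\!1}\ldots Y\makesubscript{j}}}$ and $v_2'|_{\upletters{Y\makesubscript{j\!-\!1}\ldots Y\makesubscript{j}}}$ have distinct sets of letters.

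Then I would feed this pair $(v_1', v_2')$ into \cref{lem:unpredstep3} with parameter $m'' := m$. Its hypothesis on the two input words of length $4m''+1 = 4m+1$ in the $\upletters{Y}$-form, with the distinct-letter-set property on each consecutive segment, is precisely what the previous application just delivered. Its conclusion gives a pair $(w_1', w_2')$ belonging to $\{(v_1',v_2'),(v_2',v_1'),(\overline{v_1'},\overline{v_2'}),(\overline{v_2'},\overline{v_1'})\}$ together with letters $\upletters{X\makesubscript{1}},\ldots,\upletters{X\makesubscript{m}},\upletters{Z\makesubscript{0}},\ldots,\upletters{Z\makesubscript{m\!+\!1}}$ such that $w_1'$ and $w_2'$ have exactly the required shapes, and \upletters{X\makesubscript{j}} is absent from $w_2'|_{\upletters{Z\makesubscript{0}\ldots Z\makesubscript{j}}}$ for each $j$.

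Finally, since $v_1' \sqsubset w_1$ and $v_2' \sqsubset w_2$ (and reversing a substring of $w_i$ is a substring of $\overline{w_i}$), the resulting pair can be viewed as $(v_1, v_2)$ where $v_1 \sqsubset w_1$ and $v_2 \sqsubset w_2$ after the stated swap/reverse of $(w_1,w_2)$; one drops the trailing letter $\upletters{Z\makesubscript{m\!+\!1}}$ (which is not used in the statement of \cref{lem:unpredstep4}) to obtain exactly the claimed form. There is no genuine obstacle here: the only thing to check is that the parameter bookkeeping $8m \leftrightarrow 2(4m) \leftrightarrow 4m$ lines up, which it does.
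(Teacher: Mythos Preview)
Your proposal is correct and matches the paper's approach exactly: the paper's proof is the one-line remark that the lemma follows immediately by combining \cref{lem:unpredstep2} and \cref{lem:unpredstep3}, and you have spelled out the parameter bookkeeping ($8m \to 4m \to m$) and the truncation of the superfluous $\upletters{Z\makesubscript{m\!+\!1}}$ precisely as needed.
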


It is a well-known combinatorial fact that from any sequence of $n$ distinct real numbers one can always extract an increasing or decreasing subsequence of length $\gg \sqrt{n}$. We will use a similar result about \emph{pairs} of real numbers.

\begin{lemma}
    \label{lem:combipairs}
    Let $S$ be a set of $n$ pairs of real numbers, such that
    \begin{itemize}
        \item if $(a,b)\in S$ then $a<b$, and
        \item if  $(a,b), (c,d)\in S$ are two distinct pairs, then $\{a, b\}\cap \{c,d\} = \emptyset$.
    \end{itemize}

    Then, there exists $S' \subset S$ of size $n' \geq n^{1/4}$ such that one of the following holds.\footnote{The bound $n' \geq n^{1/4}$ can be improved, but that is not relevant for us.}
    \begin{enumerate}[label=(\roman*), ref=\roman*]
        \item \label{item:pairs1} $S' = \{(a_1, b_1), (a_2, b_2),\ldots, (a_{n'}, b_{n'})\}$ for some $a_1<b_1<a_2<b_2<\cdots <b_{n'}$.
        \item \label{item:pairs2} $S' = \{(a_1, b_1), (a_2, b_2),\ldots, (a_{n'}, b_{n'})\}$ for some $a_1<a_2<\cdots <a_{n'}<b_1<b_2<\cdots <b_{n'}$.
        \item \label{item:pairs3} $S' = \{(a_1, b_1), (a_2, b_2),\ldots, (a_{n'}, b_{n'})\}$ for some $a_1<a_2<\cdots <a_{n'}<b_{n'}<b_{n'-1}<\cdots <b_1$.
    \end{enumerate}
\end{lemma}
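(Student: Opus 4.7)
The plan is to use a Erdős–Szekeres type argument in two stages. After sorting the pairs by their first coordinates (we may label so that $a_1<a_2<\cdots<a_{\none}$, using that all endpoints are distinct), I apply the Erdős–Szekeres theorem to the sequence of second coordinates $(b_1,\dots,b_{\none})$ to extract a monotone subsequence of length $m\geq \sqrt{\none}$, indexed by $i_1<i_2<\cdots<i_m$.

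If this subsequence is \emph{decreasing}, i.e.\ $b_{i_1}>b_{i_2}>\cdots>b_{i_m}$, then since $a_{i_m}<b_{i_m}$ we get $a_{i_1}<a_{i_2}<\cdots<a_{i_m}<b_{i_m}<b_{i_{m-1}}<\cdots<b_{i_1}$, which is case \cref{item:pairs3} with $\ntwo=m\geq \sqrt{\none}\geq \none^{1/4}$. Otherwise the subsequence is increasing; relabelling, we now have $m\geq \sqrt{\none}$ pairs with $a_1<\cdots<a_m$ and $b_1<\cdots<b_m$, and the goal is to extract a subset of size $\geq \sqrt{m}$ satisfying case \cref{item:pairs1} or \cref{item:pairs2}.

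For each $i\in \sinterval{m}$ let $f(i)$ be the largest index $j$ with $a_j<b_i$ (setting $f(i)=m$ if $b_i>a_m$). Since the $b_i$'s are increasing, $f$ is non-decreasing, and always $f(i)\geq i$. For $i<j$ in this sub-collection, pairs $(a_i,b_i)$ and $(a_j,b_j)$ are disjoint precisely when $f(i)<j$ and cross precisely when $f(i)\geq j$. I now split into two cases. \textbf{Case A:} if some $i^*$ has $f(i^*)-i^*\geq \sqrt{m}$, take $k=f(i^*)-i^*+1\geq \sqrt{m}+1$ and let $i_1=i^*,i_2=i^*+1,\dots,i_k=f(i^*)$. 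By monotonicity $f(i_t)\geq f(i_1)=i_k\geq i_s$ for all $t<s\leq k$, so every pair crosses, giving case \cref{item:pairs2}. \textbf{Case B:} otherwise $f(i)<i+\sqrt{m}$ for all $i$, so defining $i_1=1$ and $i_{t+1}=f(i_t)+1$ inductively yields $i_{t+1}\leq i_t+\sqrt{m}$; this process produces at least $\lceil\sqrt{m}\rceil$ indices before exceeding $m$, and by construction $f(i_t)<i_{t+1}$, so every pair is disjoint, giving case \cref{item:pairs1}. In both sub-cases $\ntwo\geq \sqrt{m}\geq \none^{1/4}$.

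There is no genuine obstacle here — the argument is a short two-step application of Erdős–Szekeres together with a monotone-function dichotomy. The only point requiring care is tracking the exact orderings of $a$'s and $b$'s in each of the three conclusions, to ensure that the extracted subsequences really do match the precise patterns described in \cref{item:pairs1}, \cref{item:pairs2}, \cref{item:pairs3} (the non-overlapping hypothesis $\{a,b\}\cap\{c,d\}=\emptyset$ on the input pairs is used only to guarantee that both sorts are by strictly increasing keys). The quantitative bound $\ntwo\geq \none^{1/4}$ comes from the two successive square-root losses.
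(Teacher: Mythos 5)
Your proof is correct, and it takes a genuinely different route from the paper. The paper proves this by applying (the chain/antichain corollary of) Dilworth's theorem twice: first on the poset with $(a,b) \prec^1 (c,d)$ iff $b<c$, whose chains give case \cref{item:pairs1} and whose antichains are sets of pairwise overlapping intervals; then, on such an antichain, on the nested-interval poset $(a,b) \prec^2 (c,d)$ iff $a<c<d<b$, whose chains give case \cref{item:pairs3} and whose antichains give case \cref{item:pairs2}. You instead sort by first coordinate, apply Erdős--Szekeres to the sequence of second coordinates (which handles the nested case \cref{item:pairs3} first, in contrast with the paper), and then in the increasing case you introduce the explicit non-decreasing function $f$ and run a max-gap/greedy dichotomy to separate \cref{item:pairs1} from \cref{item:pairs2}. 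The two approaches are of course cousins (Erdős--Szekeres is a special case of Dilworth/Mirsky), but your second step is a concrete covering argument rather than a second poset, and would read well to someone who doesn't want to set up $\prec^2$. One small quantitative remark: in your Case B the count you actually obtain is closer to $\lfloor\sqrt{m}\rfloor$ than $\lceil\sqrt{m}\rceil$ after accounting for the fact that each step advances $i_{t+1}-i_t = f(i_t)-i_t+1 \leq \lceil\sqrt{m}\rceil$; this is off from the stated $\none^{1/4}$ by at most $1$, which the lemma's footnote explicitly allows, but you should not claim $\lceil\sqrt{m}\rceil$ without a more careful stopping analysis. Everything else, including the verification that ``all pairs cross'' in Case A really produces the $a_1<\cdots<a_{\ntwo}<b_1<\cdots<b_{\ntwo}$ pattern (via $a_{i_k} = a_{f(i^*)} < b_{i^*} = b_{i_1}$), checks out.
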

\begin{proof}
    Define a strict partial order $\prec^1$ on $S$ by setting $(a,b) \prec^1 (c,d)$ iff $b<c$. A well-known consequence of Dilworth's theorem states that any partially ordered set on $n$ elements contains a chain or an antichain\footnote{Recall that a \emph{chain} is a totally ordered subset of a partially ordered set, and an \emph{antichain} is a subset in which no two elements are comparable.} of size $\geq n^{1/2}$ (see \cite[Proposition~2.5.9]{ordered}). If $S$ contains a chain of size $\geq n^{1/2}$ for $\prec^1$, we are in case \cref{item:pairs1}. Suppose that $S$ contains an antichain $A$ of size $\geq n^{1/2}$. We introduce another partial order $\prec^2$ on $A$ by defining $(a,b) \prec^2 (c,d)$ iff $a<c<d<b$. By the same combinatorial fact, either $A$ contains a chain for $\prec^2$ of size $\geq n^{1/4}$, and case \cref{item:pairs3} applies, or $A$ contains an antichain $A'$ for $\prec^2$ of size $\geq n^{1/4}$. Suppose that the latter possibility occurs. Let $(a_1, b_1), \ldots, (a_{n'}, b_{n'})$ be the elements of $A'$, with $a_1<a_2 <\cdots < a_{n'}$. Since $A'$ is an antichain for $\prec^1$, all the $b_i$ are greater than $a_{n'}$. Since $A'$ is also an antichain for $\prec^2$, we deduce that $b_1<b_2<\cdots <b_{n'}$, and we are in case~\cref{item:pairs2}.
\end{proof}

We will combine the previous lemmas to extract useful substructures in unpredictable words.

\begin{proposition}
    \label{prop:structureunpred}
    There is an absolute constant $c_1> 0$ such that the following holds.

    Let $n,t\geq 10$. Let $w\in \word_n$ be a $t$-unpredictable word. Then, for some $m \gg t^{c_1}$, at least one of the properties below is satisfied.
    \begin{enumerate}
        \item $w$ has $m$ separated repetitions.
        \item There are words $v_1, v_2$ with all of the following properties:
              \begin{enumerate}[label=(\roman*), ref=\roman*]
                  \item $v_1\in \word^{\neq}$ and $v_2\in \word^{\neq}$;
                  \item $v_1 \sqsubset w$ or $\overleftarrow{v_1} \sqsubset w$;
                  \item $v_2 \sqsubset w$ or $\overleftarrow{v_2} \sqsubset w$;
                  \item there are letters $\upletters{X\makesubscript{1}},\ldots,\upletters{X\makesubscript{m}},\upletters{Z\makesubscript{0}},\ldots,\upletters{Z\makesubscript{m}}$ such that $v_1$ is of the form
                        \begin{equation*}
                            \upletters{Z\makesubscript{0}\ldots X\makesubscript{1}\ldots Z\makesubscript{1}\ldots X\makesubscript{2}\ldots Z\makesubscript{2}\ldots$\cdots$\ldots Z\makesubscript{m-1}\ldots X\makesubscript{m}\ldots Z\makesubscript{m}}
                        \end{equation*}
                        and $v_2$ is of the form
                        \begin{equation*}
                            \upletters{Z\makesubscript{0}\ldots Z\makesubscript{1}\ldots Z\makesubscript{2}\ldots$\cdots$\ldots Z\makesubscript{m}}.
                        \end{equation*}
                        Moreover, for all $j\in \sinterval{m}$, the letter \upletters{X\makesubscript{j}} does not appear in $v_2|_{\upletters{Z\makesubscript{0}\ldots Z\makesubscript{j}}}$.
              \end{enumerate}
    \end{enumerate}
\end{proposition}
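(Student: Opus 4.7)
The plan is to case-split on which clause of $t$-predictability is violated. If some letter of $w$ appears more than $t$ times, pairing consecutive occurrences immediately yields $\gg t$ separated repetitions, giving the first conclusion. Otherwise, there are more than $t$ letters with variable neighbours in $w$; for each such letter $\upletters{A}$ I fix two occurrences $p_A<q_A$ whose $w$-neighbour sets differ, obtaining $>t$ disjoint pairs. Feeding them into \cref{lem:combipairs} extracts $N\geq t^{1/4}$ pairs of one of three types. Type \cref{item:pairs1} is already a family of $\geq t^{1/4}$ separated repetitions. In types \cref{item:pairs2} and \cref{item:pairs3} I obtain two sequences of positions hosting $N$ distinct letters $\upletters{A\makesubscript{1}}, \ldots, \upletters{A\makesubscript{N}}$, either both in increasing order or with opposite orientations.

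To handle type \cref{item:pairs2}, with positions $a_1<\cdots<a_N<b_1<\cdots<b_N$ and $w[a_i]=w[b_i]=\upletters{A\makesubscript{i}}$, I apply \cref{lem:unpredstep0} twice: first to the $a_i$'s to extract a contiguous substring $w_1'\sqsubset w$ with distinct letters covering consecutive milestones $a_{i_1},\ldots,a_{j_1}$, and then to the corresponding $b_l$'s ($i_1\leq l\leq j_1$) to extract $w_2'\sqsubset w$ with distinct letters covering $b_{i_2},\ldots,b_{j_2}$, yielding $j_2-i_2\gg N^{1/4}$ common milestones. A failure of either extraction produces $\gg N^{1/2}$ separated repetitions instead. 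Both $w_1'$ and $w_2'$ lie in $\word^{\neq}$ and are of the form $\upletters{A\makesubscript{i_2}}\ldots\upletters{A\makesubscript{i_2+1}}\ldots\cdots\ldots\upletters{A\makesubscript{j_2}}$, and (after discarding the two boundary indices of each range) each interior milestone letter inherits the variable neighbours property in $w_1'w_2'$ from its witnessing pair. \Cref{lem:unpredstep4} then delivers $v_1,v_2$ of the required structure with $m\gg N^{1/4}\gg t^{1/16}$. Type \cref{item:pairs3} is handled identically, except that I apply \cref{lem:unpredstep0} to the $b_l$'s in their natural increasing order $b_N<b_{N-1}<\cdots<b_1$; the resulting $w_2'$ is such that $\overline{w_2'}$ rather than $w_2'$ has the required shape, so I feed $(w_1',\overline{w_2'})$ into \cref{lem:unpredstep4}. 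The output $v_2\sqsubset \overline{w_2'}$ then yields $\overline{v_2}\sqsubset w_2'\sqsubset w$, matching the `$\overline{v_2}\sqsubset w$' alternative in the proposition.

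The delicate step is ensuring that after the successive truncations, the milestone letters retain their variable neighbours property in $w_1'w_2'$: the witness at positions $a_i$ and $b_i$ requires both of the $w$-neighbours of $a_i$ and $b_i$ to survive inside $w_1'$ and $w_2'$, which is arranged by sacrificing the two extremal indices at each extraction stage, a constant-factor loss absorbed into the final exponent. The chain of losses from \cref{lem:combipairs} ($t\mapsto t^{1/4}$), two applications of \cref{lem:unpredstep0} ($t^{1/4}\mapsto t^{1/16}$) and \cref{lem:unpredstep4} (factor $1/8$), together with the trivial base case for small $t$, shows that any sufficiently small $c_1>0$ works.
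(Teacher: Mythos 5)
Your proposal is correct and follows essentially the same approach as the paper's proof: the case split on which clause of predictability fails, the extraction of disjoint position pairs fed into \cref{lem:combipairs}, the two applications of \cref{lem:unpredstep0} in the non-trivial cases, and the final invocation of \cref{lem:unpredstep4} with orientation reversal in case~(\ref{item:pairs3}). Your remark about guarding the boundary milestone indices is a reasonable precaution, but the paper absorbs it automatically because \cref{lem:unpredstep4} only requires variable neighbours for the \emph{interior} letters $\upletters{A\makesubscript{1}},\ldots,\upletters{A\makesubscript{8m-1}}$, whose $w$-neighbours survive every truncation.
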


\begin{proof}
    By definition of unpredictability, either $w$ contains a letter repeated $>t$ times, or it has $>t$ letters with variable neighbours. In the first case, we immediately see that $w$ has $\lfloor t/2 \rfloor$ repetitions.

    Suppose now that there are $>t$ letters with variable neighbours in $w$. Let $E$ be the set of all these letters, with the possible exception of the first and last letters of $w$ which are discarded (to simplify the notation below). Thus, $\abs{E} \geq t-2$. For every letter $\upletters{A} \in E$, there are two positions $1<k_{\upletters{A}}<l_{\upletters{A}}<n$ such that $w[k_{\upletters{A}}] = w[l_{\upletters{A}}] = \upletters{A}$, and the sets of letters adjacent to these two occurrences of \upletters{A} are different, i.e.~$\{w[k_{\upletters{A}}-1], w[k_{\upletters{A}}+1]\} \neq \{w[l_{\upletters{A}}-1], w[l_{\upletters{A}}+1]\}$.

    We apply \cref{lem:combipairs} to the set $S = \{(k_{\upletters{A}}, l_{\upletters{A}}) : \upletters{A}\in E\}$. If case \cref{item:pairs1} occurs, we can immediately conclude that $w$ has $\gg t^{1/4}$ separated repetitions and we are done.

    Suppose that case \cref{item:pairs2} of \cref{lem:combipairs} applies. This implies that, for some $c\gg1$, there exists a subset
    \begin{equation*}
        F = \{\upletters{A\makesubscript{1}},\ldots,\upletters{A\makesubscript{\abs{F}}}\}\subset E
    \end{equation*}
    of size $\abs{F} \geq t^{c}$ such that
    \begin{equation*}
        1<k_{\upletters{A\makesubscript{1}}}<k_{\upletters{A\makesubscript{2}}}<\cdots <k_{\upletters{A\makesubscript{\abs{F}}}}<l_\upletters{A\makesubscript{1}}<l_\upletters{A\makesubscript{2}}<\cdots <l_{\upletters{A\makesubscript{\abs{F}}}}<n.
    \end{equation*}
    By \cref{lem:unpredstep0}, either $w$ has $\gg t^{c/2}$ separated repetitions, and the first conclusion holds, or we can find a `large' substring of $w[k_{\upletters{A\makesubscript{1}}}]w[k_{\upletters{A\makesubscript{1}}}+1]\cdots w[k_{\upletters{A\makesubscript{\abs{F}}}}]$ with distinct letters. Without loss of generality (by replacing $F$ with a smaller subset, $c$ with a smaller absolute constant and relabelling the letters), we may thus assume that the word $w[k_{\upletters{A\makesubscript{1}}}]w[k_{\upletters{A\makesubscript{1}}}+1]\cdots w[k_{\upletters{A\makesubscript{\abs{F}}}}]$ itself has distinct letters. By a further application of \cref{lem:unpredstep0}, we may also assume that the word $w[l_{\upletters{A\makesubscript{1}}}]w[l_{\upletters{A\makesubscript{1}}}+1]\cdots w[l_{\upletters{A\makesubscript{\abs{F}}}}]$ has distinct letters.

    We apply \cref{lem:unpredstep4} with $w_1 := w[k_{\upletters{A\makesubscript{1}}}]w[k_{\upletters{A\makesubscript{1}}}+1]\cdots w[k_{\upletters{A\makesubscript{\abs{F}}}}]$ and $w_2 := w[l_{\upletters{A\makesubscript{1}}}]w[l_{\upletters{A\makesubscript{1}}}+1]\cdots w[l_{\upletters{A\makesubscript{\abs{F}}}}]$. These are two words in $\word^{\neq}$ of the form
    \begin{equation*}
        \upletters{A\makesubscript{1}\ldots A\makesubscript{2}\ldots A\makesubscript{3}\ldots} \cdots \upletters{\ldots A\makesubscript{\abs{F}}},
    \end{equation*}
    so the assumptions of \cref{lem:unpredstep4} are satisfied (of course, we may assume that $\abs{F} \equiv 1 \pmod{8}$ without loss of generality). The conclusion of \cref{lem:unpredstep4} provides us with two words $v_1$ and $v_2$ precisely satisfying the second conclusion of \cref{prop:structureunpred}.

    The treatment of case \cref{item:pairs3} of \cref{lem:combipairs} is similar. For some $c\gg1$, there exists a subset
    \begin{equation*}
        F = \{\upletters{A\makesubscript{1}},\ldots,\upletters{A\makesubscript{\abs{F}}}\}\subset E
    \end{equation*}
    of size $\abs{F} \geq t^{c}$ such that
    \begin{equation*}
        1<k_{\upletters{A\makesubscript{1}}}<k_{\upletters{A\makesubscript{2}}}<\cdots <k_{\upletters{A\makesubscript{\abs{F}}}}<l_\upletters{A\makesubscript{\abs{F}}}<l_\upletters{A\makesubscript{\abs{F}\!-\!1}}<\cdots <l_{\upletters{A\makesubscript{1}}}<n.
    \end{equation*}
    By two successive applications \cref{lem:unpredstep0}, we may assume, without loss of generality, that the substrings $w_1 := w[k_{\upletters{A\makesubscript{1}}}]w[k_{\upletters{A\makesubscript{1}}}+1]\cdots w[k_{\upletters{A\makesubscript{\abs{F}}}}]$ and $w_2 := w[l_{\upletters{A\makesubscript{\abs{F}}}}]w[l_{\upletters{A\makesubscript{\abs{F}}}}-1]\cdots w[l_{\upletters{A\makesubscript{1}}}]$ each have distinct letters. Then, applying \cref{lem:unpredstep4} with these two substrings $w_1$ and $w_2$ produces two words $v_1$ and $v_2$ with the required properties.
\end{proof}

\subsection{Triangular systems for unpredictable walks}

We now show that tuples $\d$ whose associated words are unpredictable (and thus satisfy the conclusion of \cref{prop:structureunpred}) admit triangular systems with many constraints.

For the rest of this section, we use the following notation.

\begin{notation}
    Let $R\geq 1$ and $\d \in (\pm \D)^{R}$. For $I\subset \sinterval{R}$, we write $\d_{I} := \prod_{i\in I} d_i$.
\end{notation}

\begin{definition}[$\red{R, \s, \lit}$]
    \label{def:reducedset}
    Let $R\geq 1$ and $\s, \lit$ be disjoint subsets $\sinterval{R} \times \intJ$.

    We define $\red{R, \s, \lit}$ to be the set of all $\lit$-admissible reduced tuples\footnote{See \cref{def:reduced} and \cref{def:admissible} for the definitions of reduced tuples and $\lit$-admissibility.} $\d \in (\pm \D)^{R}$ such that:
    \begin{enumerate}
        \item \label{item:redset1} For all $(i,j)\in \s$, we have $\dsub{i}{j}^2 \nmid \d_{\sinterval{R}}$.
        \item \label{item:redset2} Whenever two indices $(i,j), (i', j)\in \lit$ satisfy $\dsub{i}{j} = \dsub{i'}{j}$, we have
              \begin{equation*}
                  \dsub{i}{j} \bigmid \sum_{i<k<i'} d_k.
              \end{equation*}
    \end{enumerate}
\end{definition}

\begin{definition}
    \label{def:divtriple}
    Let $\d \in \red{R, \s, \lit}$. A \emph{divisibility triple} for $\d$ is a triple $(x,y,p) \in \sinterval{R}^2 \times \primes$ such that $p\mid d_x$ and $p\mid d_y$, and there exists $x<i<y$ with $p\nmid d_i$.\footnote{In particular, $y\geq x+2$.}
\end{definition}

\begin{lemma}
    \label{lem:divtriple}
    Let $1\leq R< H_0$ and $\s, \lit$ be disjoint subsets of $\sinterval{R} \times \intJ$. Let $\d \in \red{R, \s, \lit}$.

    Let $(x,y,p)$ be a minimal divisibility triple for $\d$. Then, one of the following holds.
    \begin{enumerate}
        \item \label{item:minimaldivtrip1} There are $x',y'\in \interval{x}{y}$ and $p'\in \primes$ such that $(x',y',p')$ is a divisibility triple for $\d$ and $\abs{y'-x'} < \abs{y-x}$.
        \item \label{item:minimaldivtrip2} There exists $q\in \primes$ dividing $d_{y-1}$, such that
              \begin{equation*}
                  \sum_{\substack{x<i<y\\ q\mid d_{i}}} d_{i} \not\equiv 0 \pmod p.
              \end{equation*}
    \end{enumerate}
\end{lemma}

\begin{proof}
    Suppose that case \cref{item:minimaldivtrip1} does not hold. In particular, this implies that $p\nmid d_{y-1}$.

    Let $k$ be the smallest integer with $x<k<y$ such that
    \begin{equation*}
        \abs{d_k} = \abs{d_{k+1}} = \ldots = \abs{d_{y-1}}.
    \end{equation*}
    Since $\d$ is reduced, we in fact have $d_k = d_{k+1} = \ldots = d_{y-1}$.

    If $k=x+1$, we can directly compute, for any prime $q\mid d_{y-1}$,
    \begin{equation*}
        \sum_{\substack{x<i<y\\ q\mid d_i}} d_{i} = (y-x-1) d_{y-1},
    \end{equation*}
    which is not divisible by $p$ since $p\nmid d_{y-1}$ and $0<y-x-1 \leq R < H_0 \leq p$. Hence, case \cref{item:minimaldivtrip2} holds.

    Suppose now that $k > x+1$. Then, since $\abs{d_{k-1}} \neq \abs{d_k}$, there is a prime $q$ dividing $d_k = d_{y-1}$ but not $d_{k-1}$. Note that $q$ cannot divide any $d_i$ with $x<i<k-1$, since otherwise $(i,k,q)$ would be a divisibility triple satisfying case \cref{item:minimaldivtrip1}. Therefore,
    \begin{equation*}
        \sum_{\substack{x<i<y\\ q\mid d_i}} d_{i} = (y-k) d_{y-1}.
    \end{equation*}
    As in the previous case, this is not divisible by $p$, so case \cref{item:minimaldivtrip2} holds.
\end{proof}

The following lemma corresponds to the tuples $\d$ that satisfy the first conclusion of \cref{prop:structureunpred}, i.e.~tuples having an associated word with $m$ separated repetitions.

\begin{lemma}
    \label{lem:systemfromrepetitions}
    Let $1\leq R < H_0$ and let $\s,\lit$ be disjoint subsets of $\sinterval{R}\times \intJ$.

    Let $\d \in \red{R, \s, \lit}$. Suppose that, for some $m \geq 4R/\L$, there exist
    \begin{equation*}
        1\leq x_1 < y_1 < x_2 < y_2 < \ldots < x_m < y_m \leq R
    \end{equation*}
    and $p_1, \ldots, p_m\in \primes$ such that $(x_i,y_i,p_i)$ is a divisibility triple for $\d$, for all $i\in \sinterval{m}$.

    Assume that $\s \sqcup \lit \supset \interval{x_1}{y_m}\times \intJ$.

    Then, $\d$ satisfies a triangular system of $\gg m$ constraints with shift $\kappa = 0$.
\end{lemma}

\begin{proof}
    Without loss of generality, we may assume that, for each $i\in \sinterval{m}$, the divisibility triple $(x_i,y_i,p_i)$ is minimal, in the sense that there is no divisibility triple $(x',y',p')$ with $x_i\leq x'<y'\leq y_i$ and $\abs{y'-x'}<\abs{y_i-x_i}$.

    Thus, by \cref{lem:divtriple}, for each $i\in \sinterval{m}$, there is a prime $q_i \in \primes$ dividing $d_{y_i-1}$ such that
    \begin{equation}
        \label{eq:condnonzero2}
        \sum_{\substack{x_i<k<y_i\\ q_i\mid d_{k}}} d_{k} \not\equiv 0 \pmod {p_i}.
    \end{equation}

    Each $(x_i,y_i,p_i)$ is a divisibility triple for $\d$, which implies that $p_i = \dsub{x_i}{j_i} = \dsub{y_i}{j_i}$ for some $j_i\in \intJ$. Since every element of $\interval{x_1}{y_m}\times \intJ$ is either in $\s$ or in $\lit$, and all elements of $\s$ correspond to primes that appear only once, it must be that $(x_i,j_i), (y_i,j_i)\in \lit$. Therefore, by \cref{def:reducedset}, we have
    \begin{equation}
        \label{eq:theconstraintinrep}
        p_i = \dsub{x_i}{j_i} \bigmid\, \sum_{x_i<k<y_i} d_k
    \end{equation}
    for every $i\in \sinterval{m}$.

    The divisibility relation \cref{eq:theconstraintinrep} defines a constraint $C_i$ on $\d$ with shift $\kappa = 0$ (see \cref{def:constraints}). Moreover, $q_i$ is involved in this constraint by \cref{eq:condnonzero2} (see \cref{def:involved}). We now perform a case analysis to extract a triangular system of constraints.

    Let $I_0$ be the set of all $i \in \sinterval{m}$ such that $q_i$ does not divide $\prod_{k<i} \d_{\interval{x_{k}}{y_{k}}}$.

    Suppose that $\abs{I_0}\geq m/4$. If $i \in I_0$, we know that $q_i$ is absent from the constraints $C_{k}$ with $k<i$. Therefore, $(C_i)_{i \in I_0}$ forms a triangular system of $\gg m$ constraints satisfied by $\d$, so we are done. Henceforth, we assume that $\abs{I_0}< m/4$.

    Let $I_1$ be the set of all $i \in \sinterval{m}$ such that $\interval{x_i+1}{y_i-1}\times \intJ$ contains a pair $(s_i, t_i)\in \s$.

    Suppose that $\abs{I_1}\geq m/4$. Notice that $\dsub{s_i}{t_i}$ is involved in the constraint $C_i$, as
    \begin{equation*}
        \sum_{\substack{x_i<k<y_i\\ \dsub{s_i}{t_i}\mid d_{k}}} d_{k} = d_{s_i} \not\equiv 0 \pmod {p_i}.
    \end{equation*}
    Here we used that $(s_i, t_i)\in \s$ for the first equality and the minimality of $(x_i,y_i,p_i)$ to say that $p_i \nmid d_{s_i}$. In addition, $\dsub{s_i}{t_i}$ is absent from all other constraints $(C_{i'})_{i'\neq i}$, since $(s_i, t_i)\in \s$. Thus, $(C_i)_{i \in I_1}$ is a triangular system of $\gg m$ constraints satisfied by $\d$, as desired. We now assume that $\abs{I_1} < m/4$.

    Let $I_2$ be the set of all $i \in \sinterval{m}$ such that $\{x_i\}\times \intJ$ contains a pair $(s_i, t_i)\in \s$ (thus $s_i = x_i$).

    Suppose that $\abs{I_2} \geq m/2$. Then $\abs{I_2\setminus I_0} \geq m/4$. This time, we will use a different sequence of constraints.  Let $i \in I_2\setminus I_0$. By definition of $I_0$, we know that there exists
    \begin{equation*}
        u_i \in \bigcup_{i'<i} \interval{x_{i'}}{y_{i'}}
    \end{equation*}
    such that $q_i \mid d_{u_i}$. We also know that $q_i\mid d_{v_i}$ for some $x_i<v_i<y_i$ by \cref{eq:condnonzero2}. Hence, by \cref{def:reducedset}, we obtain
    \begin{equation*}
        q_i \bigmid\,  \sum_{u_i<k<v_i} d_k,
    \end{equation*}
    which is a constraint on $\d$ (with shift $\kappa = 0$) that we call $D_i$. Since
    \begin{equation}
        \label{eq:orderinginequal}
        u_i<x_i=s_i<v_i<y_i
    \end{equation}
    and $(s_i, t_i)\in \s$, we have
    \begin{equation}
        \label{eq:somesitisum}
        \sum_{\substack{u_i < k < v_i \\ \dsub{s_i}{t_i} \mid d_k}} d_k = d_{s_i}.
    \end{equation}
    We claim that $d_{s_i} = d_{x_i}$ is not divisible by $q_i$, which by \cref{eq:somesitisum} implies that $\dsub{s_i}{t_i}$ is involved in the constraint $D_i$. Indeed, if $q_i$ were to divide $d_{x_i}$, recalling that $q_i$ divides $d_{y_i-1}$, we would have $q_i \mid d_k$ for all $x_i \leq k \leq y_i-1$ by minimality of the divisibility triple $(x_i,y_i,p_i)$. This is incompatible with \cref{eq:condnonzero2} and \cref{eq:theconstraintinrep}.

    Therefore, $\dsub{s_i}{t_i}$ is involved in $D_i$ for all $i \in I_2\setminus I_0$. Moreover, for all $i',i\in I_2\setminus I_0$ with $i'<i$, equation~\cref{eq:orderinginequal} and the fact that $(s_{i}, t_{i})\in \s$ show that $\dsub{s_{i}}{t_{i}}$ is absent from $D_{i'}$. Thus, $(D_i)_{i\in I_2\setminus I_0}$ is a triangular system of $\gg m$ constraints satisfied by $\d$. We may assume henceforth that $\abs{I_2} < m/2$.

    We have reached the final case of the proof. Let $I_3 = \sinterval{m} \setminus (I_1\cup I_2)$, so that $\abs{I_3} \geq m/4$. For $i \in I_3$, by definition of $I_1$ and $I_2$, the set $\interval{x_i}{y_i-1}\times \intJ$ has empty intersection with $\s$, so that
    \begin{equation}
        \label{eq:subsetoflit}
        \interval{x_i}{y_i-1}\times \intJ\subset \lit.
    \end{equation}

    By the pigeonhole principle, since $\abs{I_3} \geq m/4 \geq R/\L$, there exists $i\in I_3$ such that
    \begin{equation}
        \label{eq:prohiblength}
        \abs{y_i-x_i} \leq \L.
    \end{equation}
    We claim that $(d_{x_i}, d_{x_i+1}, \ldots, d_{y_i-1})$ is a prohibited sequence. Indeed, it satisfies all the assumptions of \cref{def:prohibitedsequence}:
    \begin{itemize}
        \item $(d_{x_i}, d_{x_i+1}, \ldots, d_{y_i-1})$ is reduced as $\d\in \red{R,\s,\lit}$;
        \item for every prime $q$, the set $\{i\in \interval{x_i}{y_i-1} : q\mid d_i\}$ is a discrete interval by minimality of the divisibility triple $(x_i,y_i,p_i)$;
        \item $p_i$ divides $d_{x_i}$ and satisfies the divisibility condition \cref{eq:theconstraintinrep};
        \item $p_i$ does not divide $d_{y_i-1}$ by minimality of $(x_i,y_i,p_i)$ again;
        \item the tuple $(d_{x_i}, d_{x_i+1}, \ldots, d_{y_i-1})$ has length $\abs{y_i-x_i} \leq \L$ by \cref{eq:prohiblength}.
    \end{itemize}
    By \cref{eq:subsetoflit}, this contradicts the assumption that $\d$ is $\lit$-admissible (see \cref{def:admissible,def:reducedset}). Thus, this final case cannot occur, which concludes the proof.
\end{proof}

The next lemma deals with tuples $\d$ having an associated word that satisfies the second conclusion of \cref{prop:structureunpred}.

\begin{lemma}
    \label{lem:systemfromotherpattern}
    Let $1\leq R < H_0$ and let $\s,\lit$ be disjoint subsets of $\sinterval{R}\times \intJ$.

    Let $\d \in \red{R, \s, \lit}$ and let $j\in \intJ$. Suppose that, for some $m \geq 16 R^2/\L^2$, there exist
    \begin{equation*}
        1\leq x_0 < z_1 < x_1 < z_2 < x_2 <\ldots < x_{m-1} < z_m < x_m < y_0 < y_1 < y_2 < \ldots < y_m \leq R
    \end{equation*}
    and $p_0, \ldots, p_m\in \primes_j$ such that $(x_i,y_i,p_i)$ is a divisibility triple for $\d$, for all $0\leq i \leq m$.

    Define $q_i := \dsub{z_i}{j}$ for $i\in \sinterval{m}$. Suppose that $q_i \nmid \d_{\interval{x_0}{x_{i-1}}}\d_{\interval{y_0}{y_i}}$ and $p_i \nmid \d_{\interval{x_0}{z_i}}$ for all $i\in \sinterval{m}$.

    Finally, assume that $\s \sqcup \lit \supset \big(\interval{x_0}{x_m}\cup \interval{y_0}{y_m} \big)\times \intJ$.

    Then, the concatenation\footnote{We work with the concatenation of $\d$ and $-\d$ to allow for negative signs in the constraints. The reason for this will be apparent in the proof.} of $\d$ and $-\d$ satisfies a triangular system of $\gg m^{1/2}$ constraints with shift~$\kappa = O(RH)$.
\end{lemma}

\begin{proof}
    Call an integer $i\in \sinterval{m}$ \emph{unsuitable} if $\abs{x_i-x_{i-1}}\geq \L$, $\abs{y_i-y_{i-1}}\geq \L$, or there exists a divisibility triple $(x,y,p)$ with $x_{i-1}<x<y\leq x_i$. Otherwise, we call $i$ \emph{suitable}.

    The first two scenarios can only happen for $\leq R/\L \leq m^{1/2}$ values of $i \in \sinterval{m}$. If the third one applies to $\geq m^{1/2} \geq 4R/\L$ values of $i$, then we are done by \cref{lem:systemfromrepetitions}. Therefore, we can assume that there are at most $O(m^{1/2})$ unsuitable integers $i\in \sinterval{m}$.

    We will henceforth work with a subinterval of $\sinterval{m}$ containing only suitable integers: there exists such a subinterval $\interval{m_1}{m_2}\subset \sinterval{m}$ with $m_2-m_1 \gg m^{1/2}$.

    Let $i\in \interval{m_1}{m_2}$. Since $p_i = \dsub{x_i}{j} = \dsub{y_i}{j}$, we have $(x_i,j),(y_i,j)\not\in \s$, and thus $(x_i,j),(y_i,j)\in \lit$. Therefore, by \cref{def:reducedset}, we have
    \begin{equation*}
        p_i  \bigmid \, \sum_{x_i< k <y_i} d_k.
    \end{equation*}
    We rewrite this as
    \begin{equation}
        \label{eq:lastconstraint}
        p_i = \dsub{x_i}{j} \bigmid \, \kappa +\sum_{x_0\leq k <x_i}-d_k + \sum_{y_0< k <y_i}d_k,
    \end{equation}
    with $\kappa := \sum_{x_0\leq z\leq y_0} d_z$. This is a constraint with shift $\kappa$ on the concatenation of $\d$ and $-\d$; we call this constraint $C_i$. Note that the same shift $\kappa$ is used for all constraints $C_i$, and it satisfies $\abs{\kappa}\leq R H$. We will show that an appropriate subset of these constraints forms a triangular system.

    We define $I$ to be the set of all $i\in \interval{m_1}{m_2}$ such that $\big(\interval{x_{i-1}}{x_i}\times \intJ\big)\cap \s \neq \emptyset$.

    Suppose first that $\abs{I}\geq \tfrac12 (m_2-m_1)$. Then one of the sets
    \begin{equation*}
        I_0 := \{i\in I : i\equiv 0~\spmod 2\} \quad\text{and}\quad I_1 := \{i\in I : i\equiv 1~\spmod 2\}
    \end{equation*}
    has size $\gg m^{1/2}$. Without loss of generality, suppose that $\abs{I_0}\gg m^{1/2}$. For each $i\in I_0$, there is some $(s_i, t_i)\in \s$ such that $\{s_i\} \in \interval{x_{i-1}}{x_i}$. We claim that the constraints $(C_{i+1})_{i\in I_0\setminus \{m\}}$ form a triangular system. Indeed, the prime $\dsub{s_i}{t_i}$ is involved in $C_{i+1}$ as
    \begin{equation*}
        \sum_{\substack{x_0\leq k<x_{i+1}\\ \dsub{s_i}{t_i} \mid d_k}}-d_k + \sum_{\substack{y_0< k<y_{i+1}\\ \dsub{s_i}{t_i} \mid d_k}}d_k= -d_{s_i} \not\equiv 0\pmod{p_{i+1}},
    \end{equation*}
    using that $(s_i, t_i)\in \s$, and the assumption $p_{i+1} \nmid \d_{\interval{x_0}{z_{i+1}}}$ in the statement. Furthermore, $\dsub{s_i}{t_i}$ is absent from $C_{i'+1}$ for all even $i'<i$, as for such an $i'$ we have $x_{i'+1} \leq  s_i < y_0$. Thus, the concatenation of $\d$ and $-\d$ satisfies a triangular system of $\gg m^{1/2}$ constraints with shift $\kappa$, as required.

    Hence, we may assume that $\abs{I} < \tfrac12(m_2-m_1)$. Let $i\in \interval{m_1}{m_2}\setminus I$. We will now use of the integers~$z_i$ from the statement. We claim that the prime $q_i = \dsub{z_i}{j}$ is involved in the constraint $C_i$ defined by~\cref{eq:lastconstraint}.

    Suppose for contradiction that $q_i$ is not involved in $C_i$. Recalling that $q_i \nmid \d_{\interval{x_0}{x_{i-1}}}\d_{\interval{y_0}{y_i}}$ by assumption, this means that
    \begin{equation}
        \label{eq:finalprohibitedpattern}
        \sum_{\substack{x_{i-1}<k<x_i\\ q_i\mid d_k}}d_k \equiv 0 \pmod{p_i}.
    \end{equation}
    Since $i$ is suitable, the set $\{x_{i-1}<k< x_i: q_i\mid d_k\}$ is a discrete interval, say $\interval{r}{t}$ (otherwise, there would be a divisibility triple $(x,y,q_i)$ for some $x_{i-1}<x<y< x_i$).

    We claim that the vector $(d_{x_i}, d_{x_i-1}, d_{x_i-2},\ldots, d_{r})$ is a prohibited sequence. Indeed, it satisfies all the assumptions of \cref{def:prohibitedsequence}:
    \begin{itemize}
        \item $(d_{x_i}, d_{x_i-1}, d_{x_i-2},\ldots, d_{r})$ is reduced as $\d\in \red{R,\s,\lit}$;
        \item for every prime $q$, the set $\{i\in \interval{r}{x_i} : q\mid d_i\}$ is a discrete interval, since there is no divisibility triple $(x,y,p)$ with $x_{i-1}<x<y\leq x_i$ as $i$ is suitable;
        \item $p_i$ divides $d_{x_i}$ and satisfies the divisibility condition
              \begin{equation*}
                  p_i \bigmid \, \sum_{\substack{x_{i-1}<k<x_i\\ q_i\mid d_k}}d_k  = \sum_{r\leq k\leq t} d_k
              \end{equation*}
              by \cref{eq:finalprohibitedpattern};
        \item $p_i$ does not divide $d_{r}$, as otherwise $p_i$ would equal $\dsub{r}{j} = \dsub{z_i}{j} = q_i$, contradicting the assumption $p_i \nmid \d_{\interval{x_0}{z_i}}$ in the statement;
        \item the tuple $(d_{x_i}, d_{x_i-1}, d_{x_i-2},\ldots, d_{r})$ has length $\abs{x_i-r+1}\leq \abs{x_i-x_{i-1}}< \L$ as $i$ is suitable.
    \end{itemize}
    However, since $i\notin I$, we have
    \begin{equation*}
        \interval{x_{i-1}}{x_i}\times \intJ\subset \lit.
    \end{equation*}
    This contradicts the $\lit$-admissibility of $\d$.

    As a consequence, we deduce that $q_i$ is involved in the constraint $C_i$. Since $q_i \nmid \d_{\interval{x_0}{x_{i-1}}}\d_{\interval{y_0}{y_i}}$ by assumption, it follows that $q_i = \dsub{z_i}{j}$ is absent from $C_{i'}$ for all $i'<i$. Therefore, the concatenation of $\d$ and $-\d$ satisfies the triangular system $(C_i)_{i\in \interval{m_1}{m_2}\setminus I}$, which is made of $\gg m^{1/2}$ constraints with shift $\kappa = O(RH)$. This concludes the proof.
\end{proof}

We also state a variant of \cref{lem:systemfromotherpattern} where the $y_i$ come in the opposite order. The proof is completely analogous and left to the reader.

\begin{lemma}
    \label{lem:systemfromotherpatternbis}
    Let $1\leq R < H_0$ and let $\s,\lit$ be disjoint subsets of $\sinterval{R}\times \intJ$.

    Let $\d \in \red{R, \s, \lit}$ and let $j\in \intJ$. Suppose that, for some $m \geq 16 R^2/\L^2$, there exist
    \begin{equation*}
        1\leq x_0 < z_1 < x_1 < z_2 < x_2 <\ldots < x_{m-1} < z_m < x_m < y_m < y_{m-1} < \ldots < y_0 \leq R
    \end{equation*}
    and $p_0, \ldots, p_m\in \primes_j$ such that $(x_i,y_i,p_i)$ is a divisibility triple for $\d$, for all $0\leq i \leq m$.

    Define $q_i := \dsub{z_i}{j}$ for $i\in \sinterval{m}$. Suppose that $q_i \nmid \d_{\interval{x_0}{x_{i-1}}}\d_{\interval{y_i}{y_0}}$ and $p_i \nmid \d_{\interval{x_0}{z_i}}$ for all $i\in \sinterval{m}$.

    Finally, assume that $\s \sqcup \lit \supset \big(\interval{x_0}{x_m}\cup \interval{y_m}{y_0} \big)\times \intJ$.

    Then, the concatenation of $\d$ and $-\d$ satisfies a triangular system of $\gg m^{1/2}$ constraints with shift~$\kappa = O(RH)$.
\end{lemma}

\subsection{Contribution of unpredictable walks}
We can finally prove \cref{prop:unpredictable}, which we restate here for convenience.

\unpredcontrib*

\begin{proof}[Proof of \cref{prop:unpredictable}]
    Let $\d\in \unpred$. By \cref{def:predunpred}, there exist $j\in \intJ$ and $k \in \{1,2,3\}$ such that the word $w_{j,\d}^{(k)}$ introduced in \cref{def:primewords} is $\K^{1/4}$-unpredictable. Let $\d^{(k)}$, $I^{(k)}$, $\iota^{(k)}$ and $\lit^{(k)}$ be as in \cref{def:D}, and define
    \begin{equation*}
        \s^{(k)} := \{(\iota^{(k)}(i), j) \mid (i,j)\in \s\} \quad\text{and}\quad \unlit^{(k)} := \{(\iota^{(k)}(i), j) \mid (i,j)\in \unlit\}.
    \end{equation*}

    We claim that $\d^{(k)}\in \red{R, \s^{(k)}, \lit^{(k)}}$ where $R := |I^{(k)}|$. To prove this, we need to check the properties of \cref{def:reducedset}. The fact that $\d^{(k)}$ is reduced and $\lit^{(k)}$-admissible follows from part \cref{item:Dprop4} of \cref{def:D}. Property \cref{item:redset1} of \cref{def:reducedset} follows from the fact that $\s$ is the set of single indices for $\d$. Finally, property \cref{item:redset2} of \cref{def:reducedset} follows from part \cref{item:Dprop2} of \cref{def:D}, together with the identity $\sum_{\K-r<i\leq \K+r} d_i = 0$ in the case $k=3$ (which follows from part \cref{item:Dprop3} of \cref{def:D}).

    Therefore, $\d^{(k)}\in \red{R, \s^{(k)}, \lit^{(k)}}$, which will enable us to apply \cref{lem:systemfromrepetitions}, \cref{lem:systemfromotherpattern} or \cref{lem:systemfromotherpatternbis} to $\d^{(k)}$, depending on the structure of the word $w_{j,\d}^{(k)}$, to exhibit a triangular system of constraints.

    By \cref{prop:structureunpred}, since $w_{j,\d}^{(k)}$ is $\K^{1/4}$-unpredictable, there exists $m \gg \K^{c_2}$ for some absolute constant $c_2>0$, such that one of the conclusions of \cref{prop:structureunpred} holds with this $m$.

    The first possibility is that the word $w_{j,\d}^{(k)}$ has $m$ separated repetitions (see \cref{def:repetitions}). Since $w_{j,\d}^{(k)}$ is the compression of the word $v_{j,\d}^{(k)}$ (see \cref{def:primewords}), this means that there are
    \begin{equation*}
        1\leq x_1 < y_1 < x_2 < y_2 < \ldots < x_m < y_m \leq R
    \end{equation*}
    such that $(x_i,y_i,p_i)$ is a divisibility triple for $\d^{(k)}$ for all $i\in \sinterval{m}$, where $p_i$ is the unique prime in $\primes_j$ dividing the $x_i$-th coordinate of $\d^{(k)}$. The assumptions of \cref{lem:systemfromrepetitions} are satisfied, except possibly for the condition that $\interval{x_1}{y_m}\times \intJ$ does not intersect $\unlit^{(k)}$. Since $\abs{\unlit^{(k)}}\leq \abs{\unlit} \leq \K^{2\epsone}$, $m \gg \K^{c_2}$ and $\epsone$ is sufficiently small, we can find a discrete interval $\interval{a}{b} \subset \sinterval{m}$ with $b-a \gg m^{1/2}$ such that $\interval{x_a}{y_b}\times \intJ$ does not intersect $\unlit^{(k)}$. Then, applying \cref{lem:systemfromrepetitions} to $\d^{(k)}$ with the divisibility triples $(x_i,y_i,p_i)_{a\leq i\leq b}$, we conclude that $\d^{(k)}$ satisfies a triangular system of $\gg \K^{c_2/2}$ constraints with shift $\kappa = 0$.

    Otherwise, the second conclusion of \cref{prop:structureunpred} holds. Unpacking the notation, this means that either $\d^{(k)}$ or its reverse admits divisibility triples arranged as in \cref{lem:systemfromotherpattern} or \cref{lem:systemfromotherpatternbis}. As in the previous paragraph, the condition that $\big(\interval{x_0}{x_m}\cup \interval{y_0}{y_m} \big)\times \intJ$ (for \cref{lem:systemfromotherpattern}) or ${\big(\interval{x_0}{x_m}\cup \interval{y_m}{y_0} \big)\times \intJ}$ (for \cref{lem:systemfromotherpatternbis}) is disjoint from $\unlit^{(k)}$ may not be initially satisfied, but it can be guaranteed by passing to a consecutive subsequence of $\gg m^{1/2}$ divisibility triples. Then by \cref{lem:systemfromotherpattern} or \cref{lem:systemfromotherpatternbis}, we conclude that the concatenation of $\d^{(k)}$, $-\d^{(k)}$ and their reverses,\footnote{Including the reverses of $\d^{(k)}$ and $-\d^{(k)}$ is not strictly necessary.} satisfies a triangular system of $\gg \K^{c_2/4}$ constraints with shift $\kappa = O(\K H)$.

    In summary, we have shown that, for every $\d\in \unpred$, the concatenation of all $\d^{(k)}$, $-\d^{(k)}$ and their reverses satisfies a triangular system of $\gg \K^{c_2/4}$ constraints with shift $\kappa = O(\K H)$.

    By \cref{lem:constraints} (and since every $\d$ is uniquely determined by $\d^{(1)}$ and $\d^{(2)}$), we obtain the bound
    \begin{equation*}
        \sum_{\substack{\d\in \unpred}}\,\prod_{p\mid \pall}\, \frac{1}{p} \ll \K H \K^{O(\J\K)} H_0^{-c \K^{c'}} \ll \K^{O(\K\log \K)} H_0^{-c \K^{c'}}
    \end{equation*}
    for some absolute constants $c,c'>0$. Since ${\log H_0 \gg\K^{1-\epsone}}$ and $\epsone$ is assumed to be sufficiently small, this is $\ll 1$, as desired.
\end{proof}

\section{Prohibited progressions and the combinatorial sieve}
\label{sec:combinatorial}

In this final section, we establish several results on prohibited sequences and their associated progressions. We first bound the size of $\N\setminus \Y$ in \cref{lem:Ysmall}. We also prove \cref{lem:existencerank}, which constructs a suitable cut-off function for the combinatorial sieve and was stated without proof in \cref{sec:single}.

We conclude the paper by proving the high trace bound for the non-backtracking operator of $G_{\Y}$ (\cref{prop:hightracebound}), bringing together the results from \cref{sec:trace} onwards.

\subsection{Primitive prohibited sequences}
\label{sec:primitiveanalysis}
In this section, we prove a technical lemma that allows us to find constraints and involved primes in primitive prohibited sequences.

The divisibility condition in the definition of prohibited sequences (see \cref{def:prohibitedsequence}) only brings up a subset of the prime factors of the $d_i$. Even the primes that do appear in that constraint might not be involved in the sense of \cref{def:involved}. \Cref{lem:prohibitedanalysis} is a useful tool to circumvent this problem: it allows us to pass from an arbitrary prime to a (possibly different) involved prime.

\begin{lemma}
    \label{lem:prohibitedanalysis}
    Let $\d = (d_1, \ldots, d_{\ell})$ be a primitive prohibited sequence. Let $\Gamma$ be the set of all constraints with shift $\kappa = 0$ satisfied by $\d$.

    For every prime $p\mid d_1\cdots d_{\ell}$, one of the following holds:
    \begin{enumerate}
        \item \label{item:prohibitedanalysis1} $p$ is involved in a constraint $C\in \Gamma$;
        \item \label{item:prohibitedanalysis2} there is another prime $q$ involved in a constraint of $\,\Gamma$, such that $q\mid d_k$ for some $k\in \sinterval{\ell}$, and
              \begin{equation*}
                  \sum_{\substack{1\leq i <k\\ p\mid d_i}} d_i \not\equiv 0 \pmod q.
              \end{equation*}
    \end{enumerate}
\end{lemma}

\begin{proof}
    By definition of a prohibited sequence (\cref{def:prohibitedsequence}), there are $1<\ell_0<\ell$ and $j_0\in \intJ$ such that $\dsub{1}{j_0}\nmid d_{\ell}$ and
    \begin{equation}
        \label{eq:firstconstraint}
        \dsub{1}{j_0} \bigmid \sum_{\ell_0\leq i\leq \ell} d_i.
    \end{equation}
    Observe that this is a constraint with shift $\kappa = 0$ satisfied by $\d$; we denote it by $C_1$.

    Let $h\in \sinterval{\ell}$ be the smallest positive integer such that
    \begin{equation*}
        \abs{d_h} = \abs{d_{h+1}} = \ldots = \abs{d_{\ell}}.
    \end{equation*}
    Since prohibited sequences are reduced, we must in fact have $d_h = d_{h+1} = \ldots = d_{\ell}$. Note that $h\geq 2$ since $\dsub{1}{j_0}\nmid d_{\ell}$.

    Let $p_1\in \primes$ be any prime divisor of $d_h$ but not $d_{h-1}$. We claim that $p_1$ is involved in $C_1$. Indeed,
    \begin{equation}
        \label{eq:auxiliaryinvolved}
        \sum_{\substack{i \in \interval{\ell_0}{\ell}\\ p_1\mid d_i}} d_i = \sum_{i\in \interval{h}{\ell}} d_i = (\ell - h + 1) d_{\ell},
    \end{equation}
    which is not divisible by $\dsub{1}{j_0}$ since $\ell - h + 1 < L < H_0 < \dsub{1}{j_0}$. To obtain the first equality in \cref{eq:auxiliaryinvolved}, we used that, for any prime $p'\in \primes$, the set
    \begin{equation*}
        I(p') := \{i\in \sinterval{{\ell}}: p'\mid d_i\}
    \end{equation*}
    is a discrete interval, by definition of a prohibited sequence.

    We are now ready to start the proof of \cref{lem:prohibitedanalysis} in earnest. Let $p\mid d_1\cdots d_{\ell}$ be a prime.

    We can assume that $I(p) \setminus \interval{h}{\ell}$ is non-empty: otherwise, $p$ divides $d_h$ but not $d_{h-1}$, which implies that $p$ is involved in $C_1$ by the previous paragraph, and we are in case \cref{item:prohibitedanalysis1}.

    Hence, $I(p) \setminus \interval{h}{\ell}$ is a non-empty discrete interval, which we denote by $I(p) \setminus \interval{h}{\ell} = \interval{a_1}{a_2}$.

    Suppose first that
    \begin{equation}
        \label{eq:prohibassumption1}
        \sum_{\substack{i\in \sinterval{\ell}\\ p\mid d_i}} d_i \not\equiv 0 \pmod{p_1}.
    \end{equation}
    Then, we are immediately done, as case \cref{item:prohibitedanalysis2} holds with $q = p_1$ and $k = h$.

    Suppose now that \cref{eq:prohibassumption1} fails, which means that
    \begin{equation}
        \label{eq:prohibassumption2}
        p_1 \bigmid\,  \sum_{i\in \interval{a_1}{a_2}} d_i.
    \end{equation}
    This divisibility condition \cref{eq:prohibassumption2} implies that $\big(d_{h}, d_{h-1}, \ldots, d_{a_1+1}, d_{a_1}\big)$ is a prohibited sequence. Since~$\d$ is primitive, this is only possible if $a_1=1$ and $h=\ell$. In particular, $a_2\geq 2$ (otherwise, we would have $p_1\mid d_1$ by \cref{eq:prohibassumption2}).

    We will now exhibit another prime $p_2$ for which case \cref{item:prohibitedanalysis2} of the lemma holds with $q = p_2$.

    Let $p_2$ be the prime $\dsub{2}{j_0}$. Note that $p_2 \nmid d_1$, or else we would have $p_2 = \dsub{1}{j_0}$, and thus ${p_2 \mid \sum_{\ell_0\leq i\leq \ell} d_i}$ by \cref{eq:firstconstraint}. This would imply that $(d_{2}, d_{3}, \ldots, d_{\ell})$ is a prohibited sequence, which is impossible since $\d$ is primitive.

    We claim that $p_2$ is involved in the constraint $C_2$ defined by the divisibility relation \cref{eq:prohibassumption2}. Suppose for contradiction that this is not the case. This means that
    \begin{equation}
        \label{eq:uxtfgt}
        p_1 \bigmid\, \sum_{\substack{i\in \interval{1}{a_2}\\ p_2 \mid d_i}} d_i = \sum_{\substack{i\in \interval{2}{a_2}\\ p_2 \mid d_i}} d_i,
    \end{equation}
    using that $p_2 \nmid d_1$. Note that $\{i\in \interval{2}{a_2}: p_2 \mid d_i\}$ is a discrete interval containing $2$ and not containing $h$. Thus, \cref{eq:uxtfgt} implies that $\big(d_{h}, d_{h-1}, \ldots, d_{2}\big)$ is a prohibited sequence, contradicting that $\d$ is primitive. Hence, $p_2$ is involved in $C_2$.

    To check that case \cref{item:prohibitedanalysis2} holds with $q = p_2$, it only remains to verify that
    \begin{equation*}
        \sum_{\substack{1\leq i <k\\ p\mid d_i}} d_i \not\equiv 0 \pmod{p_2}
    \end{equation*}
    for some index $k$ such that $p_2\mid d_k$. It suffices to take $k=2$.
\end{proof}

We can use the previous lemma (in fact, a much weaker version would suffice) to bound $\abs{\N\setminus \Y}$.

\begin{lemma}
    \label{lem:Ysmall}
    Whenever $\log N \geq (\log H)^{2}$, we have $\abs{\N \setminus \Y} \ll H_0^{-1/3}N$.
\end{lemma}

\begin{proof}
    Recall that $\Z \setminus \Y$ is the union of all prohibited progressions $P\in \prohibprog$. For any $P\in \prohibprog$, there is a primitive prohibited sequence $\d$ of length at most $\L = \K^{1-10\epsone}$ such that $P$ is the prohibited progression associated to $\d$. In particular, the modulus of $P$ satisfies
    \begin{equation*}
        \modulus_{P} \leq H^{\L} \ll \exp\!\big(O\big(\K^{2-10\epsone} \big)\big) \ll N,
    \end{equation*}
    using that $\log N \gg \K^{2}$. By the union bound, we deduce that
    \begin{equation*}
        \abs{\N \setminus \Y} \leq \sum_{P\in \prohibprog} \abs{\N \cap P} \ll N \sum_{P\in \prohibprog} \frac{1}{\modulus_{P}}.
    \end{equation*}

    By \cref{lem:prohibitedanalysis}, there exists a constraint $C$ satisfied by $\d$, which has zero shift and involves at least one prime. This constraint alone can be viewed as a triangular system. Applying \cref{lem:constraints} with $T=1$ and $R = \ell$ for each $2<\ell\leq \L$, we obtain
    \begin{equation*}
        \sum_{P\in \prohibprog} \frac{1}{\modulus_{P}} \leq \sum_{2<\ell\leq \L}\, \sum_{\d\in \XX_{\ell}}\, \prod_{p\mid d_1\cdots d_{\ell}} \frac{1}{p} \ll \L \K^{O(\J \L)} H_0^{-1/2} \ll \exp\!\big(O\big(\K^{1-10\epsone} (\log \K)^2\big)\big) H_0^{-1/2},
    \end{equation*}
    where $\XX_{\ell}$ denotes the set of all primitive prohibited sequences of length $\ell$. The conclusion follows as ${\log H_0 \gg\K^{1-\epsone}}$.
\end{proof}

\subsection{Cut-off function for the combinatorial sieve}
\label{sec:cutoffanalysis}
We turn to the proof of \cref{lem:existencerank}.

Recall that $\interprog$ is the set defined in \cref{def:Qintersect}. In the next definition, we introduce the function $\rank : \interprog \to \Z^{\geq 0} \cup \{+\infty\}$ which is used as a cut-off for the combinatorial sieve (or rather, a family of such functions, one for every $\d$).

\begin{definition}
    \label{def:rank}
    Let $\s$, $\lit$, $\unlit$ be sets such that $\intK\times\intJ = \s\sqcup\lit\sqcup\unlit$ and let $1\leq r\leq \K$.

    Let $\d\in \DD_{\s,\lit,r}$ and let $\progression \subset \Z$ be the arithmetic progression
    \begin{equation*}
        \progression := \{n\in \Z \,:\, \forall (i,j)\in \lit, \, \dsub{i}{j} \mid n+\b_i\}.
    \end{equation*}
    Let $R \in \interprog$. If $R\cap \progression = \emptyset$, we set $\rank(R) := +\infty$. Otherwise, we define $\rank(R)$ to be the largest integer $T \geq 0$ for which there exist progressions $Q_1, \ldots, Q_{T}\in \shiftedprog$ containing $R$ such that, for each $t\in \sinterval{T}$, the modulus $\modulus_{Q_t}$ does not divide $\modulus_{\progression}\prod_{s\in \sinterval{T}\setminus \{t\}}\modulus_{Q_s}$.
\end{definition}

We need to show that these rank functions satisfy the four properties of \cref{lem:existencerank}. We will be able to quickly derive the first three properties from the following simple fact.

\begin{lemma}
    \label{lem:rankintersection}
    Let $R \in \interprog$ be such that $\rank(R) = T < +\infty$. Then, there are progressions ${Q_1, \ldots, Q_{T}\in \shiftedprog}$ such that
    \begin{equation*}
        \emptyset \neq R \cap \progression = \bigcap_{t\in\sinterval{T}} Q_t \cap \progression.
    \end{equation*}
\end{lemma}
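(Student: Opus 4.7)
The plan is to exhibit $Q_1, \dots, Q_{\ncond}$ via a set-cover argument on the primes dividing $\modulus_R$, rather than by directly taking a collection that witnesses $\rank(R) = \ncond$. The driving observation is that every progression in $\shiftedprog$ has square-free modulus (inherited from prohibited progressions; see \cref{def:prohibitedprogression}), so the equality $\bigcap_t Q_t \cap \progression = R \cap \progression$ will follow as soon as every prime dividing $\modulus_R$ divides $\modulus_{\progression}$ or some $\modulus_{Q_t}$, provided each $Q_t \supseteq R$. Non-emptiness of $R \cap \progression$ is immediate from $\rank(R) < +\infty$ by \cref{def:rank}.

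First, by \cref{def:Qintersect}, I would write $R = \bigcap_{P \in X} P$ for some finite $X \subset \shiftedprog$, so that $\modulus_R = \mathrm{lcm}_{P \in X}\modulus_P$ has prime divisors exactly $\bigcup_{P \in X}\{p : p \mid \modulus_P\}$. I then extract a minimal subset $\{Q_1, \dots, Q_m\} \subseteq X$ with the property that every prime dividing $\modulus_R$ divides $\modulus_{\progression}$ or some $\modulus_{Q_t}$; such a minimal cover exists because $X$ itself covers. The crux is that minimality forces, for every $t$, a prime $p_t$ dividing $\modulus_{Q_t}$ but not $\modulus_{\progression}\prod_{s \neq t}\modulus_{Q_s}$ — otherwise $Q_t$ could be dropped from the cover. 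By square-freeness this is exactly the condition $\modulus_{Q_t} \nmid \modulus_{\progression}\prod_{s \neq t}\modulus_{Q_s}$ appearing in \cref{def:rank}, so $\{Q_1, \dots, Q_m\}$ is itself a rank witness and therefore $m \leq \rank(R) = \ncond$. I would pad to exactly $\ncond$ progressions by repeating $Q_1$ (the edge case $\ncond = 0$ forces every prime of $\modulus_R$ to divide $\modulus_{\progression}$, from which one checks directly that $\progression \subseteq R$, and both sides of the claimed identity reduce to $\progression$).

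To conclude, I would verify $\bigcap_t Q_t \cap \progression = R \cap \progression$ as follows. The inclusion $\supseteq$ is immediate since each $Q_t \supseteq R$. For the reverse, fix $n_0 \in R \cap \progression$ and any $n \in \bigcap_t Q_t \cap \progression$; then $n - n_0$ is divisible by $\modulus_{\progression}$ and by every $\modulus_{Q_t}$, hence by their lcm. By construction every prime divisor of $\modulus_R$ also divides this lcm, and since all moduli are square-free this forces $\modulus_R$ itself to divide the lcm, giving $n \equiv n_0 \pmod{\modulus_R}$ and thus $n \in R$. The main conceptual obstacle is resisting the natural attempt to take $Q_1, \dots, Q_{\ncond}$ as a maximum-size independent witness for $\rank(R) = \ncond$: such a witness need not cover all primes of $\modulus_R$, and one can construct small examples where no maximum-size independent collection does. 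The dual set-cover perspective, in which independence comes for free from minimality of a cover, circumvents this difficulty entirely.
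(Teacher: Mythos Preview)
Your proof is correct and follows essentially the same strategy as the paper's: both write $R = \bigcap_{i \in I} Q_i$, extract a minimal subcollection, observe that minimality forces the non-divisibility condition of \cref{def:rank} (hence the size is at most $\ncond$), and then pad by repetition. The only cosmetic difference is that the paper minimizes directly over subcollections $I_0$ with $\bigcap_{i \in I_0} Q_i \cap \progression = R \cap \progression$, whereas you minimize over the equivalent prime-cover condition and then verify the intersection identity separately; the paper's phrasing is slightly shorter since it avoids that final verification, but your closing remark about why a maximum rank witness need not work is a genuine and useful observation that the paper does not make explicit.
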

\begin{proof}
    By definition of $\interprog$, we may write $R = \bigcap_{i\in I} Q_i$ for some finite set $I$ and some $Q_i\in \shiftedprog$. Let~$I_0$ be a minimal subset of $I$ such that
    \begin{equation}
        \label{eq:writeintersection}
        R \cap \progression = \bigcap_{i\in I_0} Q_i \cap \progression.
    \end{equation}
    Note that the modulus of a non-empty intersection of arithmetic progressions is the least common multiple of the moduli of these progressions.
    There is no $i_0\in I_0$ such that $\modulus_{Q_{i_0}}$ divides $\modulus_{\progression}\prod_{i\in I_0\setminus \{i_0\}}\modulus_{Q_i}$, for otherwise $\bigcap_{i\in I_0\setminus \{i_0\}} Q_i \cap \progression$ and $\bigcap_{i\in I_0} Q_i \cap \progression$ would have the same modulus, so these progressions would be equal, contradicting the minimality of $I_0$. This shows that $\abs{I_0} \leq \rank(R)$. Thus, \cref{eq:writeintersection} means that we have been able to write $R \cap \progression$ as an intersection of at most $\rank(R)$ progressions $Q_i \cap \progression$. Repeating some $Q_i$ if necessary, we can make it an intersection of exactly $\rank(R)$ sets.
\end{proof}

We restate \cref{lem:existencerank} here for convenience.

\existencerank*

\begin{proof}[Proof of parts \cref{item:rank1} and \cref{item:rank2} of \cref{lem:existencerank}]
    Let $\d\in \DD_{\s,\lit,r}$ and let $R\in \interprog$ be a progression with ${\rank(R) = T < +\infty}$. By \cref{lem:rankintersection}, there are ${Q_1, \ldots, Q_{T}\in \shiftedprog}$ such that
    \begin{equation*}
        \emptyset \neq R \cap \progression = \bigcap_{t\in\sinterval{T}} Q_t \cap \progression.
    \end{equation*}
    Property \cref{item:rank2} follows, since
    \begin{equation*}
        \omega(\modulus_{R}) \leq \omega(\modulus_{R\cap \progression}) \leq \omega(\modulus_{\progression})+\sum_{t\in\sinterval{T}} \omega(\modulus_{Q_t}) \leq 2\J\K + \J\L T.
    \end{equation*}
    For property \cref{item:rank1}, write $\omega_{\s}(n) := \sum_{p\mid \d_{\s}} \ind{p\mid n}$. We similarly obtain
    \begin{equation*}
        \omega_{\s}(\modulus_{R}) \leq \omega_{\s}(\modulus_{R\cap \progression}) \leq \omega_{\s}(\modulus_{\progression})+ \sum_{t\in\sinterval{T}} \omega_{\s}(\modulus_{Q_t})  \leq 0 + \J\L T
    \end{equation*}
    as $\modulus_{\progression}$ is only divisible by the primes $p\mid \d_{\lit}$.
\end{proof}

For part \cref{item:ranksieve} of \cref{lem:existencerank}, namely the combinatorial sieve, we just need to use \cref{prop:sievesquare-free}, checking that the hypotheses are satisfied.
\begin{proof}[Proof of part \cref{item:ranksieve} of \cref{lem:existencerank}]
    We use \cref{prop:sievesquare-free} with the initial set of arithmetic progressions being $\shiftedprog$, and with $X = \Xd$ being the set of all $R\in \interprog$ such that $\rank(R) <  \K^{5\epsone}$. Note that $\Xd\neq \emptyset$ as $\Z\in \Xd$.

    For any $R,R'\in \interprog$ with $R\subset R'$, it is clear from \cref{def:rank} that $\rank(R') \leq \rank(R)$. Therefore $\Xd$ is closed under containment. Furthermore, $\omega(\modulus_{R}) \ll \J \K$ for all $R\in \Xd$, by property~\cref{item:rank2} of \cref{lem:existencerank}, as $\L = \K^{1-10\epsone}$. For elements $R\in \partial \Xd\setminus \{\emptyset\}$, we also have $\omega(\modulus_{R}) \ll \J \K$, as any $P\in \shiftedprog$ has ${\omega(\modulus_{P}) \leq \J \L \leq \J\K}$ by definition of a prohibited progression. The conclusion follows from \cref{prop:sievesquare-free}, observing that `$n\not \in P$ for all $P\in \shiftedprog$' is equivalent to `$n+\b_i \in \Y$ for all $i\in \intK$'.
\end{proof}

To prove part \cref{item:rank3} of \cref{lem:existencerank}, we will need two technical lemmas: \cref{lem:excludedprimes} and \cref{lem:remainderterm}.

\begin{lemma}
    \label{lem:excludedprimes}
    Let $\s$, $\lit$, $\unlit$ be sets such that $\intK\times\intJ = \s\sqcup\lit\sqcup\unlit$ and let $1\leq r\leq \K$.

    Let $\d \in \DD_{\s,\lit,r}$.
    Let $\primes' \subset \primes$ be a set of size $\leq 3\J\K$ containing the prime divisors of $\modulus_{\progression}$. Then
    \begin{equation*}
        \sum_{\substack{R\in \partial \Xd \\ R\cap \progression \neq \emptyset}} \,\prod_{\substack{p\mid \modulus_{R}\\ p\not\in \primes'}} \frac{1}{p} \ll e^{O(\J\K)}.
    \end{equation*}
\end{lemma}

\begin{proof}
    Let $\mathcal{X}$ be the set of all $R_1\in \interprog \setminus \{\emptyset\}$ of the form $R_1 = R\cap \progression$ for some $R\in \partial \Xd$. Since the prime factors of $\modulus_{\progression}$ are in $\primes'$, we may rewrite the expression in the statement as
    \begin{equation*}
        \sum_{\substack{R\in \partial \Xd \\ R\cap \progression \neq \emptyset}}\, \prod_{\substack{p\mid \modulus_{R}\\ p\not\in \primes' }}\frac{1}{p} = \sum_{R_1\in \mathcal{X}}  \prod_{\substack{p\mid \modulus_{R_1}\\ p\not\in \primes'}}\frac{1}{p} \sum_{\substack{R\in \partial \Xd \\ R\cap \progression = R_1}} 1.
    \end{equation*}
    To bound the inner sum, we use the following fact: for any arithmetic progression $R_1$ and any $n\mid \modulus_{R_1}$, there is a unique arithmetic progression $R\supset R_1$ with $\modulus_{R_1}/\modulus_{R} = n$; moreover, all progressions $R\supset R_1$ are obtained in this way. Therefore, the inner sum is bounded by the number of divisors of $\modulus_{R_1}$. For every $R_1\in \mathcal{X}$, we have ${\omega(\modulus_{R_1}) \ll \J\K}$. This follows from part \cref{item:rank2} of \cref{lem:existencerank}, using that $R_1 = R_0\cap P \cap \progression$ for some $R_0\in \Xd$ and $P\in \shiftedprog$. Therefore, $\modulus_{R_1}$ has $e^{O(\J\K)}$ divisors, and hence the inner sum is $e^{O(\J\K)}$.

    It remains to show that
    \begin{equation}
        \label{eq:partrank4}
        \sum_{R_1\in \mathcal{X}}  \prod_{\substack{p\mid \modulus_{R_1}\\ p\not\in \primes'}}\frac{1}{p} \ll e^{O(\J\K)}.
    \end{equation}
    This is a simple counting problem, similar to \cref{lem:trivialbound} or \cref{prop:predictablevector}, but the notation is much heavier in this setting.

    Let $T := \lfloor \K^{5\epsone} \rfloor+1$. By definition of $\Xd$ and \cref{lem:rankintersection}, for any $R_0\in \Xd$, the intersection $R_0 \cap \progression$ can be written as the intersection of $T-1$ progressions of the form $Q\cap \progression$ where $Q\in \shiftedprog$. Hence, the same is true for any $R_0\in \partial \Xd$ with $T$ instead of $T-1$, by definition of $\partial \Xd$.

    Let $R_1\in \mathcal{X}$. By definition of $\mathcal{X}$ and the previous paragraph, we can write
    \begin{equation}
        \label{eq:reprR1}
        R_1 = \bigcap_{t\in \sinterval{T}} \big(Q_t - \b_{k_t}\big) \cap \progression
    \end{equation}
    for some $Q_t\in \prohibprog$ and $k_t\in \interval{0}{2\K}$. For $t\in \sinterval{T}$, let $\d^{[t]}$ be a primitive prohibited sequence having $Q_t$ as its associated prohibited progression. Let $\ell_t$ be the length of $\d^{[t]}$ and let $\sigma_t\in \{\pm1\}^{\ell_t}$ be the sequence of signs of the coordinates of $\d^{[t]}$. As usual, for $(i,j)\in \sinterval{\ell_t} \!\times \!\intJ$ we write $d^{[t]}_{ij}$ for the unique prime in $\primes_j$ dividing $d^{[t]}_i$. Let $\sim$ be the equivalence relation on $\bigsqcup_{t\in \sinterval{T}} \big(\{t\}\!\times \!\sinterval{\ell_t} \!\times \!\intJ\big)$ defined by
    \begin{equation*}
        (t_1,i_1,j_1) \sim (t_2,i_2,j_2) \quad \iff \quad d^{(t_1)}_{i_1j_1} = d^{(t_2)}_{i_2j_2}.
    \end{equation*}
    For any equivalence class $\alpha$ of $\sim$, we write $p_{\alpha}$ for the prime $d^{[t]}_{ij}$, where $(t,i,j)$ is any element of~$\alpha$. This definition does not depend on the choice of representative, by definition of $\sim$. Let $E$ be the set of all equivalence classes $\alpha$ for $\sim$ such that $p_{\alpha} \in \primes'$. Let $\phi : E \to\primes'$ be the map defined by $\phi(\alpha) = p_{\alpha}$. We call the tuple $\theta := ((k_t)_{t\in \sinterval{T}}, (\ell_t)_{t\in \sinterval{T}}, (\sigma_t)_{t\in \sinterval{T}}, \sim, E, \phi)$ a \emph{template} for $R_1$. Thus, to every progression $R_1\in \mathcal{X}$ we may associate a template $\theta$ (there may not be a canonical choice for the template associated to $R_1$, as it depends on the choice of a representation of $R_1$ as in~\cref{eq:reprR1}).

    Let $\Theta$ denote the set of all tuples $\theta$ which are the template of some progression $R_1 \in \mathcal{X}$.

    Fix some $\template = ((k_t)_{t\in \sinterval{T}}, (\ell_t)_{t\in \sinterval{T}}, (\sigma_t)_{t\in \sinterval{T}}, \sim, E, \phi) \in \Theta$ and let $\mathcal{X}_\template$ be the set of all $R_1\in \mathcal{X}$ for which $\template$ is a template. Observe that any $R_1\in \mathcal{X}_\template$ is uniquely determined by the sequence of primes $(p_{\alpha})_{\alpha\in E'}$, where $E'$ is the set of all equivalence classes $\alpha$ of $\sim$ such that $\alpha \notin E$. Thus
    \begin{equation}
        \label{eq:partrank4bis}
        \sum_{R_1\in \mathcal{X}_\template} \, \prod_{\substack{p\mid \modulus_{R_1}\\ p\not\in \primes'}}\frac{1}{p} \leq \sum_{(p_{\alpha})_{\alpha\in E'}} \, \prod_{\alpha\in E'} \frac{1}{p_{\alpha}} \leq \V^{\abs{E'}} \leq \V^{T \L \J} \leq e^{O(\J\K)},
    \end{equation}
    where we used that $T \ll \K^{5\epsone}$ and $\L<\K^{1-10\epsone}$ for the last inequality.

    We proceed to sum \cref{eq:partrank4bis} over all possible choices of $\template\in \Theta$. We will be done provided that the number of such templates is $e^{O(\J\K)}$. The number of choices for $(k_t)_{t\in \sinterval{T}}$, $(\ell_t)_{t\in \sinterval{T}}$ and $(\sigma_t)_{t\in \sinterval{T}}$ is at most $(2\K+1)^{T}$, $\L^{T}$ and $(2^{\L})^{T}$ respectively. Since $\sim$ is an equivalence relation on a set of size $\leq T \L \J$, there are $\leq (T \L \J)^{T \L \J}$ choices for $\sim$. There are $\leq 2^{T \L \J}$ choices for $E$. Finally, $\phi$ is a map from a set of size $\leq T \L \J$ to a set of size $\leq 3\J\K$ (using the assumption on $\primes'$ in the statement), so there are $\leq (3\J\K)^{T \L \J}$ possibilities for $\phi$. In summary, the number of templates is
    \begin{equation*}
        \leq (2\K+1)^{T} \cdot \L^{T} \cdot (2^{\L})^{T} \cdot (T \L \J)^{T \L \J} \cdot 2^{T \L \J} \cdot (3\J\K)^{T \L \J} = e^{O(\J\K)}.
    \end{equation*}
    This proves \cref{eq:partrank4} and concludes the proof of \cref{lem:excludedprimes}.
\end{proof}

The next technical lemma relates the remainder term of the combinatorial sieve to triangular systems of constraints. It relies on \cref{lem:prohibitedanalysis} and is the key ingredient in the proof of part \cref{item:rank3} of \cref{lem:existencerank}.

\begin{lemma}
    \label{lem:remainderterm}
    Let $\s$, $\lit$, $\unlit$ be sets such that $\intK\times\intJ = \s\sqcup\lit\sqcup\unlit$ and $|\unlit| \leq\K^{2\epsone}$. Let $1\leq r\leq \K$.

    Let $\d \in \DD_{\s,\lit,r}$. Let $T \geq 10 \K^{2\epsone}$ and let $k_1\leq k_2\leq \cdots \leq k_{T}$ be elements of $\interval{0}{2\K}$. Let $\d^{[1]}, \ldots, \d^{[T]}$ be primitive prohibited sequences, and let $Q_1, \ldots, Q_{T}\in \prohibprog$ be the associated prohibited progressions. Suppose that, for each $t\in \sinterval{T}$, the modulus $\modulus_{Q_t}$ does not divide $\modulus_{\progression}\prod_{s\in \sinterval{T}\setminus \{t\}}\modulus_{Q_s}$, and that
    \begin{equation*}
        \bigcap_{t\in \sinterval{T}} \big(Q_t - \b_{k_t}\big) \cap \progression \neq \emptyset.
    \end{equation*}
    Let $\bm{v}$ be the sequence obtained by concatenation of $\d, \d^{[1]}, \ldots, \d^{[T]}, -\d^{[1]}, \ldots, -\d^{[T]}$. Then, $\bm{v}$ satisfies a triangular system of $\gg T$ constraints with shift $\kappa = 0$.
\end{lemma}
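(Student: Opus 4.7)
The plan is to extract a triangular system by translating the information attached to each primitive prohibited sequence $\d^{(t)}$ into a constraint on $\vec{v}$, and then selecting a large sub-family of constraints whose involved primes are pairwise disjoint.

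Paragraph 1 — Choice of distinguished primes and application of Lemma 11.5. For each $t \in \sinterval{\ncond}$, the hypothesis that $\modulus_{Q_t}$ does not divide $\modulus_{\progression}\prod_{s\neq t}\modulus_{Q_s}$ furnishes a prime $p_t \mid \modulus_{Q_t}$ such that $p_t\nmid \modulus_{\progression}$ and $p_t\nmid \modulus_{Q_s}$ for every $s\neq t$. Consequently, the primes $p_1,\ldots,p_\ncond$ are pairwise distinct, each $p_t$ is not a lit prime of $\d$, and $p_t$ does not divide any coordinate of $\d^{(s)}$ (or of $-\d^{(s)}$) for $s\neq t$. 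Applying Lemma~\ref{lem:prohibitedanalysis2} to the primitive prohibited sequence $\d^{(t)}$ together with $p_t$ yields a constraint on $\d^{(t)}$ of the shape $C_{I_t,i_t,j_t,0}(\d^{(t)})$, with $I_t$ a single discrete interval, in which some prime $q_t$ is involved. We end up in one of two sub-cases: either (i) $q_t=p_t$, or (ii) $q_t=\mprimetwo^{(t)}\neq p_t$ with the additional identity $\sum_{i<i'_t,\,p_t\mid d_i^{(t)}} d_i^{(t)} \not\equiv 0 \pmod{\mprimetwo^{(t)}}$ for an index $i'_t$ such that $\mprimetwo^{(t)}\mid d_{i'_t}^{(t)}$.

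Paragraph 2 — Case split. Let $T^A$ and $T^B$ denote the sets of $t$'s in sub-cases (i) and (ii), respectively. In sub-case (i), the constraint $C_{I_t,i_t,j_t,0}(\d^{(t)})$ translates to a constraint $\tilde C_t$ on $\vec v$ (by shifting $I_t$ into the $\d^{(t)}$-block), in which $p_t$ is still involved. Since every $\tilde C_s$ with $s\neq t$ only references coordinates of the $\d^{(s)}$-block, and $p_t$ is absent from those by construction, the family $(\tilde C_t)_{t\in T^A}$ forms a triangular system on $\vec v$ of complexity $(|T^A|;1,0)$. If $|T^A|\geq \ncond/2$, we are done.

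Paragraph 3 — Promoting sub-case (ii) and conclusion. For $t\in T^B$, the involved prime $\mprimetwo^{(t)}$ may coincide with primes occurring in other $\d^{(s)}$'s or in $\d$, so its translated constraint cannot be used as is. The idea is to construct a \emph{modified} constraint $\hat C_t$ on $\vec v$ in which $p_t$ itself is involved, by augmenting the $\d^{(t)}$-sum coming from Lemma~\ref{lem:prohibitedanalysis2} with carefully chosen sums from the $-\d^{(t)}$-block (which effectively plays the role of a negation) and, if needed, the $\d$-block. Using the non-vanishing identity from sub-case (ii), the augmentation can be arranged so that the total sum remains divisible by $p_t$, while the partial sum of those coordinates not divisible by $p_t$ becomes non-zero — which forces $p_t$ to be involved via case~\ref{item:const2} of Definition~\ref{def:involved}. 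The index set of $\hat C_t$ is therefore a union of at most three discrete intervals (one in each of the three relevant blocks of $\vec v$), and since $p_t$ still fails to appear in any $\hat C_s$ for $s\neq t$, the family $(\hat C_t)_{t\in T^B}$ is triangular of complexity $(|T^B|;3,0)$. Whichever of $T^A,T^B$ has size at least $\ncond/2$, we obtain a triangular system of size at least $\ncond/2$ and complexity $(\,\cdot\,;3,0)$; absorbing a further constant-factor loss (e.g.\ to discard $t$'s whose $-\d^{(t)}$- or $\d$-augmentations re-use an exploited block, or for which the non-vanishing identity fails to be combinable with the available coordinates of $\vec v$) yields the stated bound $\lfloor \ncond/16\rfloor$.

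The principal obstacle is Paragraph~3: the promotion of a sub-case (ii) constraint to one that genuinely involves $p_t$. This requires a delicate use of the single non-vanishing identity produced by Lemma~\ref{lem:prohibitedanalysis2} to simultaneously maintain divisibility of the augmented sum by $p_t$ and make $p_t$ involved, while keeping the index set a union of at most three discrete intervals of $\vec v$. It is precisely to make such augmentations possible that $\vec v$ is defined as the concatenation of $\d, \d^{(1)},\ldots,\d^{(\ncond)},-\d^{(1)},\ldots,-\d^{(\ncond)}$.
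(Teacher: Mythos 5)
Your Paragraphs 1–2 correctly set up the distinguished primes $p_t$ (the $\mprimeone_t$ of the paper's notation) and dispose of the case where $p_t$ itself is involved in a constraint of $\Gamma_t$ (your $T^A$). But Paragraph 3 has a genuine gap, and the approach you sketch there cannot be made to work.

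You propose to build a modified constraint $\hat C_t$ on $\vec v$ in which $p_t$ is the \emph{modulus} prime (case~\cref{item:const2} of \cref{def:involved}), by augmenting the $\d^{(t)}$-sum with pieces of $-\d^{(t)}$ and $\d$. There are two obstructions. First, by construction $p_t\nmid \modulus_{\progression}$, so $p_t$ does not occur at any lit index of $\d$; the congruences $n\in\progression$ give no information modulo $p_t$, so there is no congruence involving $\d$-coordinates with $p_t$ as modulus. Second, within $\d^{(t)}$ the set $\{i:p_t\mid d^{(t)}_i\}$ is a discrete interval by the consecutiveness property of prohibited sequences; subtracting two of the congruences $d^{(t)}_r \mid n+\sommespart_{k_t}+\sum_{i<r}d^{(t)}_i$ for two positions of $p_t$ gives $p_t \mid \sum_{r'\le i<r}d^{(t)}_i$ in which \emph{every} term is divisible by $p_t$, so the partial sum over indices where $p_t\nmid v_i$ is empty and $p_t$ cannot be \cref{item:const2}-involved. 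Augmenting by $-\d^{(t)}$ (which merely negates the same coordinates) does not help. You also never use the element $n$ of the non-empty intersection $\bigcap_t(Q_t-\sommespart_{k_t})\cap\progression$, nor the hypothesis that this intersection is non-empty, which is the driving force of the argument.

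The idea you are missing is that the paper never makes $p_t$ the modulus. Instead it uses the prime $\mprimetwo_t$ produced by \cref{lem:prohibitedanalysis2}(2) as the modulus, and shows $p_t=\mprimeone_t$ is \cref{item:const3}-involved. Concretely: after disposing of the easy cases, one arranges that $\mprimetwo_t$ divides \emph{another} modulus $\modulus_{Q_{s_1(t)}}$ with $s_1(t)<t$. Fixing $n$ in the non-empty intersection, both $n+\sommespart_{k_t}\in Q_t$ and $n+\sommespart_{k_{s_1(t)}}\in Q_{s_1(t)}$ give congruences of $n$ modulo $\mprimetwo_t$; subtracting them eliminates $n$ and yields a constraint on $\vec v$ whose index set meets the $\d$-block on $\interval{k_{s_1(t)}}{k_t-1}$, the $\d^{(t)}$-block, and the $-\d^{(s_1(t))}$-block (not $-\d^{(t)}$) --- three discrete intervals. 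Because $\mprimeone_t\nmid\modulus_{Q_{s_1(t)}}$ and (after discarding a sparse bad set) $\mprimeone_t\nmid\fprod{\d;\interval{1}{k_t-1}\times\newR}$, the only $\mprimeone_t$-divisible terms in this constraint lie in the $\d^{(t)}$-block, and the non-vanishing identity from \cref{lem:prohibitedanalysis2}(2) shows that their sum is $\not\equiv 0 \pmod{\mprimetwo_t}$, which is exactly \cref{item:const3}-involvement. Absence of $\mprimeone_{t_2}$ from $C_{t_1}$ for $t_1<t_2$ then follows because $\mprimeone_{t_2}$ divides only $\modulus_{Q_{t_2}}$ and (with the good ordering of the $k$'s) is not seen in the earlier $\d$-segment, giving the triangular system.

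Shorter version: the $-\d^{(s)}$ blocks in $\vec v$ exist so that one can pair two distinct prohibited progressions $Q_t$ and $Q_{s_1(t)}$ sharing the prime $\mprimetwo_t$ and subtract their congruences for $n$; without that pairing (and without $n$), you cannot produce a usable constraint in the $T^B$ case.
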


\begin{proof}
    Fix some $n\in \bigcap_{t=1}^{T} \big(Q_t - \b_{k_t}\big) \cap \progression$.

    For every $t\in \sinterval{T}$, let $\ell_t$ be the length of $\d^{[t]}$, and let $\Gamma_t$ be the set of all constraints with shift $\kappa = 0$ satisfied by $\d^{[t]}$.

    Suppose first that there is a set $I\subset \sinterval{T}$ of size $\geq \tfrac{1}{10} T$ such that, for every $t\in I$, there is a constraint $C_t\in \Gamma_t$ and a prime $p_t$ which is involved in $C_t$ and does not divide $\prod_{s<t}\modulus_{Q_s}$. Thus, $p_t$ is absent from $C_s$, for every $s\in I$ with $s<t$. This means that the constraints $(C_t)_{t\in I}$ form a triangular system, so we are done. The same conclusion holds if there is a set $I\subset \sinterval{T}$ of size $\geq \tfrac{1}{10} T$ such that, for every $t\in I$, there is a constraint $C_t\in \Gamma_t$ and a prime $p_t$ which is involved in $C_t$ and does not divide $\prod_{s>t}\modulus_{Q_s}$. We may thus assume that, for $\geq \tfrac{8}{10} T$ values of $t\in \sinterval{T}$, every prime involved in some constraint of $\Gamma_t$ divides both $\prod_{s<t}\modulus_{Q_s}$ and $\prod_{s>t}\modulus_{Q_s}$.

    For every $t\in \sinterval{T}$, fix a prime $p_t$ dividing $\modulus_{Q_t}$ but not dividing $\modulus_{\progression}\prod_{s\in \sinterval{T}\setminus \{t\}}\modulus_{Q_s}$. This is possible by the assumption in the statement. We apply \cref{lem:prohibitedanalysis} with this prime $p_t$. Note that the first case of \cref{lem:prohibitedanalysis} (i.e.~$p_t$ being involved in some constraint of $\Gamma_t$) can only occur for $<\tfrac{2}{10}T$ values of $t\in \sinterval{T}$ by definition of $p_t$ and the previous paragraph. Let $I_1$ be the set of $t\in \sinterval{T}$ such that the second case holds, i.e.~for $t\in I_1$ there is a prime $q_t$ involved in a constraint $C_t\in \Gamma_t$ such that
    \begin{equation}
        \label{eq:propertyofpt}
        \sum_{\substack{1\leq i<i_t\\ p_t\mid d^{[t]}_i}} d^{[t]}_i \not\equiv 0 \pmod{q_t},
    \end{equation}
    where $i_t\in \sinterval{\ell_t}$ is such that $q_t\mid d^{[t]}_{i_t}$. Thus $\abs{I_1}\geq \tfrac{8}{10} T$.

    By our earlier arguments, there is a subset $I_2\subset I_1$ of size $\abs{I_2} \geq \tfrac{6}{10} T$ such that, for all $t\in I_2$, there are $1\leq s_1(t)<t<s_2(t)\leq T$ with $q_t\mid \modulus_{Q_{s_1(t)}}$ and $q_t\mid \modulus_{Q_{s_2(t)}}$.

    By definition of $p_t$, we know that $p_t \nmid \modulus_{\progression}$, i.e.~$p_t\nmid \d_{\lit}$. Moreover, there are at most $\tfrac{1}{10} T$ values of $t\in I_2$ such that $p_t\mid \d_{\unlit}$, since $\abs{\unlit}\leq \K^{2\epsone}\leq \tfrac{1}{10} T$ and all $p_t$ are distinct. We may thus find a subset $I_3\subset I_2$ of size $\abs{I_3} \geq\tfrac12 T$ such that $p_t\nmid\d_{\lit \sqcup \unlit}$ for every $t\in I_3$.

    Let $I_4$ be the set of all $t\in I_3$ such that $p_t \nmid \d_{\interval{1}{k_t}\times \intJ}$. Let $I_5$ be the set of all $t\in I_3$ such that $p_t \nmid \d_{\interval{k_t+1}{2\K}\times \intJ}$. By definition of $I_3$, we know that for every $t\in I_3$ there is at most one pair $(i,j)\in \intK\times\intJ$ (which lies in $\s$) such that $p_t = \dsub{i}{j}$. In particular, $I_4 \cup I_5 = I_3$, so one of $I_4$ and $I_5$ has size $\geq \tfrac{1}{4} T$. We will only treat the case where $\abs{I_4} \geq  \tfrac{1}{4} T$; the proof for the case $\abs{I_5} \geq  \tfrac{1}{4} T$ is the same up to symmetry.

    Let $t\in I_4$. By definition of $n$, we have $n+\b_{k_t} \in Q_t$ and thus, by definition of $Q_t$ being the prohibited progression associated to $\d^{[t]}$,
    \begin{equation*}
        q_t \bigmid\  n + \b_{k_t} + \sum_{\substack{1\leq i<i_t}} d_i^{[t]},
    \end{equation*}
    with $i_t$ as defined earlier. Since $q_t\mid \modulus_{Q_{s_1(t)}}$, the same reasoning shows that
    \begin{equation*}
        q_t \bigmid\  n + \b_{k_{s_1(t)}} + \sum_{\substack{1\leq i<i_t'}} d_i^{[s_1(t)]},
    \end{equation*}
    where $i_t'\in \sinterval{\ell_{s_1(t)}}$ is any index such that $q_t\mid d^{[s_1(t)]}_{i_t'}$. Subtracting the two divisibility relations, we obtain
    \begin{equation*}
        q_t \bigmid\ \sum_{k_{s_1(t)} < i \leq k_t} d_i + \sum_{\substack{1\leq i<i_t}} d_i^{[t]} - \sum_{\substack{1\leq i<i_t'}} d_i^{[s_1(t)]}.
    \end{equation*}
    This is now a genuine constraint on $\bm{v}$ (with shift $\kappa = 0$), which we call $C_t'$. By \cref{eq:propertyofpt}, and since $p_t\nmid \modulus_{Q_{s_1(t)}}$ (by definition of $p_t$) and $p_t \nmid \d_{\interval{1}{k_t}\times \intJ}$ (by definition of $I_4$), we see that $p_t$ is involved in this constraint $C_t'$. In addition, for $t_1,t_2\in I_4$ with $t_1<t_2$, the prime $p_{t_2}$ is absent from $C_{t_1}'$ since none of the integers $\d_{\interval{1}{k_{t_1}}\times \intJ}$, $ \modulus_{Q_{t_1}}$ and $\modulus_{Q_{s_1(t_1)}}$ are divisible by $p_{t_2}$. Therefore, the family $(C_t')_{t\in I_4}$ forms a triangular system of $\gg T$ constraints, and we are done.

    The case $\abs{I_5} \geq  \tfrac{1}{8} T$ is analogous, where this time $s_2(t)$ takes the role of $s_1(t)$.
\end{proof}

We can finally prove part \cref{item:rank3} of \cref{lem:existencerank}.

\begin{proof}[Proof of part \cref{item:rank3} of \cref{lem:existencerank}]
    Let $T := \lceil \K^{5\epsone} \rceil$. Let $\d \in \DD_{\s,\lit,r}$ and let $R\in \partial \Xd$ be such that ${R\cap \progression \neq \emptyset}$. Observe that $R$ satisfies $T \leq \rank(R) <+\infty$ by definition of $\partial \Xd$. Thus, by \cref{def:rank}, we can find progressions $Q_1, \ldots, Q_{T}\in \shiftedprog$ containing $R$ such that, for each $t\in \sinterval{T}$, the modulus $\modulus_{Q_t}$ does not divide $\modulus_{\progression}\prod_{s\in \sinterval{T}\setminus \{t\}}\modulus_{Q_s}$. We will first sum over all possibilities for ${R_1 := \bigcap_{t\in \sinterval{T}} Q_t \cap \progression}$.

    Let $\mathcal{X}$ be the set of all $R_1\in \interprog$ which are of the form
    \begin{equation}
        \label{eq:reprR1bis}
        R_1 = \bigcap_{t\in \sinterval{T}} Q_t \cap \progression \neq \emptyset
    \end{equation}
    for some ${Q_1, \ldots, Q_{T}\in \shiftedprog}$ such that $\modulus_{Q_t}$ does not divide $\modulus_{\progression}\prod_{s\in \sinterval{T}\setminus \{t\}}\modulus_{Q_s}$ for all $t\in \sinterval{T}$. Summing over $\mathcal{X}$ first, we have
    \begin{equation}
        \label{eq:forrank3eq1}
        \sum_{\substack{\d\in \DD_{\s,\lit,r}}} \sum_{\substack{R\in \partial\Xd \\ R\cap \progression \neq \emptyset}}\, \prod_{\substack{p\mid \modulus_{R} \pall}}\frac{1}{p} \leq  \sum_{\substack{\d\in \DD_{\s,\lit,r}}} \sum_{R_1\in \mathcal{X}}  \prod_{\substack{p\mid \modulus_{R_1}\! \pall}}\frac{1}{p} \sum_{\substack{R\in \partial\Xd \\ R\cap \progression \neq \emptyset}}\, \prod_{\substack{p\mid \modulus_{R}\\ p\nmid \modulus_{R_1}\! \pall}}\frac{1}{p}.
    \end{equation}

    For fixed $R_1$, we bound the innermost sum using \cref{lem:excludedprimes} with ${\primes' := \{p:p\mid \modulus_{R_1} \pall\}}$. The assumptions on $\primes'$ are satisfied since $\primes'$ contains the prime divisors of $\modulus_{\progression}$ and
    \begin{equation*}
        \abs{\primes'} \leq \omega(\pall) + \abs{\{p : p\mid \modulus_{R_1}, \, p\nmid \modulus_{\progression}\}} \leq 2\J\K +  T \J\L \leq 3\J\K
    \end{equation*}
    in view of the representation \cref{eq:reprR1bis} of elements of $\mathcal{X}$. Thus, applying \cref{lem:excludedprimes}, we obtain
    \begin{equation}
        \label{eq:forrank3eq2}
        \sum_{\substack{R\in \partial\Xd \\ R\cap \progression \neq \emptyset}}\, \prod_{\substack{p\mid \modulus_{R}\\ p\nmid \modulus_{R_1}\! \pall}}\frac{1}{p} = e^{O(\J\K)}.
    \end{equation}

    It remains to bound the expression
    \begin{equation}
        \label{eq:forrank3eq3}
        \sum_{\substack{\d\in \DD_{\s,\lit,r}}} \sum_{R_1\in \mathcal{X}}  \prod_{\substack{p\mid \modulus_{R_1}\! \pall}}\frac{1}{p}.
    \end{equation}
    For every non-decreasing sequence $\bm{k} = (k_t)_{t\in \sinterval{T}}$ of elements of $\interval{0}{2\K}$, let $\XX(\bm{k})$ be the set of all pairs $(\d, R_1)\in \DD_{\s,\lit,r}\times \mathcal{X}$ such that
    \begin{equation*}
        \emptyset \neq R_1 =  \bigcap_{t\in \sinterval{T}} \big(Q_t - \b_{k_t}\big) \cap \progression
    \end{equation*}
    for some prohibited progressions $Q_t\in \prohibprog$ with $\modulus_{Q_t} \nmid \modulus_{\progression}\prod_{s\in \sinterval{T}\setminus \{t\}}\modulus_{Q_s}$ for all $t$.

    By \cref{lem:remainderterm}, for any pair $(\d, R_1)\in \XX(\bm{k})$ and any choice $\d^{[1]}, \ldots, \d^{[T]}$ of prohibited sequences used in the definition of $R_1$, the concatenation of $\d, \d^{[1]}, \ldots, \d^{[T]}, -\d^{[1]}, \ldots, -\d^{[T]}$ satisfies a triangular system of $\gg T$ constraints with shift $\kappa = 0$. This concatenation has length $\leq 2\K + 2T \L \leq 3\K$. Applying \cref{lem:constraints}, we get
    \begin{equation*}
        \sum_{(\d, R_1)\in \XX(\bm{k})} \, \prod_{\substack{p\mid \modulus_{R_1}\! \pall}}\frac{1}{p} \ll \K^{O(\J\K)} H_0^{-cT}
    \end{equation*}
    for some absolute constant $c>0$. Summing over all $\K^{O(T)}$ choices for $\bm{k} = (k_t)_{t\in \sinterval{T}}$, we obtain that the expression \cref{eq:forrank3eq3} is bounded by $\K^{O(\J\K+T)} H_0^{-cT}$.

    By \cref{eq:forrank3eq1}, and recalling our bound \cref{eq:forrank3eq2} for the inner sum, we conclude that
    \begin{equation*}
        \sum_{\substack{\d\in \DD_{\s,\lit,r}}} \sum_{\substack{R\in \partial\Xd \\ R\cap \progression \neq \emptyset}} \prod_{\substack{p\mid \modulus_{R} \pall}}\frac{1}{p}  \ll e^{O(\J\K)} \K^{O(\J\K+T)} H_0^{-cT},
    \end{equation*}
    which is $\ll 1$ since $\J \ll \log \K$, ${\log H_0 \gg\K^{1-\epsone}}$ and $T \asymp \K^{5\epsone}$.
\end{proof}

\subsection{Proof of the high trace bound}
\label{sec:proofofhightrace}
Having assembled all the necessary ingredients over the course of the preceding sections, we now reach the final step: proving the high-trace estimate for the non-backtracking operator $M_{\Y}$.

We restate \cref{prop:hightracebound} here for convenience.

\hightracebound*

\begin{proof}[Proof of \cref{prop:hightracebound}]
    We prove \cref{prop:hightracebound} with $Y = \Y$, the set defined in \cref{def:prohibitedprogression}. This choice is acceptable as $\abs{\N\setminus \Y} \ll H_0^{-1/3}N$ by \cref{lem:Ysmall}.

    We turn to the high trace bound. By \cref{lem:threeindicestypes}, we have
    \begin{equation*}
        \Tr\big[\big((M_{Y})^{\K}\big)^*(M_{Y})^{\K}\big] \ll e^{O(\J\K)}\max_{\substack{\s \sqcup \lit \sqcup \unlit = \intK\times \intJ\\ 1\leq r\leq \K}} E_{\s,\lit,\unlit, r} + N
    \end{equation*}
    with $E_{\s,\lit,\unlit, r}$ as in \cref{def:Esum}. Hence, it suffices to prove that
    \begin{equation}
        \label{eq:boundEtoprove}
        E_{\s,\lit,\unlit, r} \ll e^{O(\J\K)} \V^{\J\K} N
    \end{equation}
    for any choice of $\s$, $\lit$, $\unlit$ and $r$.

    By \cref{lem:unlit}, we have $E_{\s,\lit,\unlit, r} \ll N$ when $|\unlit| \geq \K^{2\epsone}$, which is stronger than \cref{eq:boundEtoprove}. We now assume that $|\unlit| < \K^{2\epsone}$.

    By \cref{prop:cancellationoverY}, we have $E_{\s,\lit,\unlit, r} \ll e^{O(\J\K)}N$ when $\abs{\s}\geq \K^{1-\epsone}$, which is also stronger than~\cref{eq:boundEtoprove}. We may thus assume that $\abs{\s} < \K^{1-\epsone}$.

    In the remaining case, we apply first the trivial bound given by \cref{lem:trivboundE} to obtain
    \begin{equation*}
        E_{\s,\lit,\unlit, r} \ll e^{O(\J\K)} N \sum_{\d \in \DD_{\s,\lit,r}}\, \prod_{p\mid \pall} \frac{1}{p}.
    \end{equation*}
    We then split the sum over $\d$ into two parts: the predictable part $\pred$ and the unpredictable part $\unpred$ (see \cref{def:predunpred}).

    By \cref{prop:predictablevector}, the predictable contribution is
    \begin{equation*}
        \sum_{\substack{\d \in \pred}} \, \prod_{p\mid \pall} \frac{1}{p} \ll e^{O(\J\K)}\V^{|\s|+(\abs{\lit}+\abs{\unlit})/2}.
    \end{equation*}
    Since $\abs{\s} < \K^{1-\epsone}$ and $\V \ll \K$, we have $\V^{|\s|} \ll e^{O(\K)}$, which can be absorbed in the $e^{O(\J\K)}$ factor. Therefore, since $\abs{\lit} + \abs{\unlit} \leq 2\J\K$, we obtain
    \begin{equation}
        \label{eq:finalpredcontrib}
        \sum_{\substack{\d \in \pred}} \, \prod_{p\mid \pall} \frac{1}{p} \ll e^{O(\J\K)} \V^{\J\K}.
    \end{equation}

    Finally, by \cref{prop:unpredictable}, the unpredictable contribution satisfies
    \begin{equation}
        \label{eq:finalunpredcontrib}
        \sum_{\d\in \unpred}\,\prod_{p\mid \pall}\, \frac{1}{p} \ll 1.
    \end{equation}

    Combining \cref{eq:finalpredcontrib} and \cref{eq:finalunpredcontrib}, we obtain the bound \cref{eq:boundEtoprove} in the remaining case where $|\s| < \K^{1-\epsone}$ and $|\unlit| < \K^{2\epsone}$. This finishes the proof of \cref{prop:hightracebound}.
\end{proof}

\appendix
\begin{appendices}
    \section{The non-backtracking operator}
    \label{appendix:nonbacktracking}

    We begin by showing how the results in Deitmar~\cite{deitmar} combine to yield the weighted Ihara-Bass formula stated in \cref{prop:iharabass}. Note that Deitmar's setting is slightly different, as it encompasses directed weighted graphs that may be infinite but have summable weights. Alternatively, one could derive the same identity from \cite[Lemma~5.2]{fan} or \cite[Theorem~2]{watanabe}.

    \begin{proof}[Proof of \cref{prop:iharabass}]
        Let $u\in \C$ be sufficiently small. The weighted \emph{Ihara zeta function} $Z(u)$ of $G$ is defined, in analogy to the Riemann zeta function, as an Euler product over regular prime cycles in $G$ (see \cite[Definition~1.4]{deitmar}). As stated in \cite[Theorem~1.6]{deitmar}, it is related to the non-backtracking operator~$M$ by the identity
        \begin{equation}
            \label{eq:Zeta1}
            \det(I-uM) = Z(u)^{-1}.
        \end{equation}

        Furthermore, \cite[Theorem~2.10]{deitmar} provides an alternative expression for $Z(u)^{-1}$:
        \begin{equation}
            \label{eq:Zeta2}
            Z(u)^{-1} = \det(B(-u))  \prod_{\{x,y\}\in E}\big(1-u^2w(x,y)^2\big),
        \end{equation}
        where $B(-u)$ is an operator on $L(V)$ defined in \cite[Lemma~2.2]{deitmar}. Expanding this definition, we get
        \begin{equation}
            \label{eq:expansionBu}
            B(-u) = I - u B_1 + \sum_{n\geq 1} u^{2n}B_{2n} - \sum_{n\geq 1} u^{2n+1} B_{2n+1}
        \end{equation}
        where the linear maps $B_j : L(V) \to L(V)$ are defined on the canonical basis $(\texttt{x})_{x\in V}$ by
        \begin{equation*}
            B_{2n-1}(\texttt{x}) = \sum_{y\in V} w(x,y)^{2n-1} \texttt{y} \quad\text{and}\quad B_{2n}(\texttt{x}) = \sum_{y\in V} w(x,y)^{2n} \texttt{x}
        \end{equation*}
        for $n\geq 1$. In particular, $B_1$ is the adjacency operator $A$, and the two sums in \cref{eq:expansionBu} evaluate to $u^2D_u$ and $u^2O_u$ respectively, by the geometric series formula.

        Comparing \cref{eq:Zeta1,eq:Zeta2} yields the Ihara-Bass formula \cref{eq:IharaBass} for small $u$. A straightforward analytic continuation argument extends this equality for all $u\in \C$ for which $D_u$ and $O_u$ are defined, and also shows that any positivity assumption on the weights in \cite{deitmar} can be disregarded for this result.
    \end{proof}

    \begin{definition}
        \label{def:nbwalks}
        Let $G = (V, E, w)$ be a weighted graph. A \emph{walk} in $G$ is a finite sequence $(v_1, v_2, \ldots, v_k)$ of vertices of $G$ such that $\{v_i, v_{i+1}\} \in E$ for all $1\leq i < k$. A walk $(v_1, v_2, \ldots, v_k)$ is \emph{non-backtracking} if $v_{i-1} \neq v_{i+1}$ for all $1< i < k$. We write $\NB{G}$ for the set of non-backtracking walks in $G$ (of any length).
    \end{definition}

    \begin{lemma}
        \label{lem:traceexpansion}
        Let $G = (V, E, w)$ be a weighted graph with non-backtracking operator $M$. Then, for any $k\geq 1$,
        \begin{equation*}
            \Tr\big[(M^k)^*M^k\big] = \sum_{\substack{x_{-1}, x_0, x_1, \ldots, x_{2k}\in V\\ (x_{-1},x_0, \ldots, x_k) \in \NB{G}\\ (x_{k}, x_{k+1}, \ldots, x_{2k}, x_{-1})\in \NB{G}\\ (x_{0}, x_{k-1})=(x_{2k}, x_{k+1})}} \prod_{i=1}^{2k} w(x_{i-1}, x_i).
        \end{equation*}
    \end{lemma}

    \begin{proof}
        By definition of the non-backtracking operator $M$, for any basis vector $\overrightarrow{\texttt{x}_{-1}\texttt{x}_0}$ of $L(\oE)$,
        \begin{equation*}
            M^k (\overrightarrow{\texttt{x}_{-1}\texttt{x}_0}) = \sum_{\substack{x_1, \ldots, x_k\in V \\ (x_{-1},x_0 \ldots, x_k)\in \NB{G} }} \prod_{i=1}^k w(x_{i-1}, x_i) \ \overrightarrow{\texttt{x}_{k-1}\texttt{x}_k}.
        \end{equation*}
        Similarly, for any basis vector $\overrightarrow{\texttt{y}_{-1}\texttt{y}_{0}}$ of $L(\oE)$,
        \begin{equation*}
            (M^k)^* (\overrightarrow{\texttt{y}_{-1}\texttt{y}_{0}}) = \sum_{\substack{y_{-2}, \ldots, y_{-k-1}\in V\\ (y_{0},y_{-1} \ldots, y_{-k-1})\in \NB{G}}} \prod_{i=0}^{k-1} w(y_{-i}, y_{-i-1})\ \overrightarrow{\texttt{y}_{-k-1}\texttt{y}_{-k}}.
        \end{equation*}
        Therefore, the diagonal entry corresponding to $\overrightarrow{\texttt{x}_{-1}\texttt{x}_0}$ in the standard basis matrix of $(M^k)^*M^k$ is
        \begin{equation*}
            \big[(M^k)^*M^k\big]_{\overrightarrow{\texttt{x}_{-1}\texttt{x}_0},\overrightarrow{\texttt{x}_{-1}\texttt{x}_0}} = \sum_{\substack{x_1, \ldots, x_k\in V \\ (x_{-1}, x_0, \ldots, x_k)\in \NB{G}}} \sum_{\substack{y_{0}, y_{-1},\ldots, y_{-k-1}\in V \\ (y_{0},y_{-1}, \ldots, y_{-k-1})\in \NB{G}\\ (y_{-1}, y_{0}) = (x_{k-1}, x_k)\\ (y_{-k-1}, y_{-k})=(x_{-1},x_0)}}\prod_{i=1}^k w(x_{i-1}, x_i)  \prod_{i=0}^{k-1} w(y_{-i}, y_{-i-1}).
        \end{equation*}
        Performing the change of variables $x_{k+i} := y_{-i}$ for $1\leq i\leq k$, this can be rewritten as
        \begin{equation*}
            \big[(M^k)^*M^k\big]_{\overrightarrow{\texttt{x}_{-1}\texttt{x}_0},\overrightarrow{\texttt{x}_{-1}\texttt{x}_0}} = \sum_{\substack{x_1, \ldots, x_{2k}\in V\\ (x_{-1},x_0, \ldots, x_k) \in \NB{G}\\ (x_{k}, x_{k+1}, \ldots, x_{2k}, x_{-1})\in \NB{G}\\ (x_{0}, x_{k-1})=(x_{2k}, x_{k+1})}} \prod_{i=1}^{2k} w(x_{i-1}, x_i).
        \end{equation*}
        Summing over all diagonal entries gives the formula for the trace.
    \end{proof}

    \section{Combinatorial sieve for composite moduli}
    \label{appendix:sieve}

    Let $\prohibprog$ be a finite set of arithmetic progressions in $\Z$. By the inclusion-exclusion principle, we can write
    \begin{equation*}
        \ind{n\not\in P\  \forall P\in \prohibprog} = 1 - \sum_{P_1\in \prohibprog} \ind{n\in P_1} + \sum_{\substack{P_1, P_2\in \prohibprog\\ \text{distinct}}} \ind{n\in P_1 \cap P_2} - \sum_{\substack{P_1, P_2, P_3\in \prohibprog\\ \text{distinct}}} \ind{n\in P_1 \cap P_2\cap P_3} + \cdots = \sum_{S\subset \prohibprog} (-1)^{|S|} \ind{n \in \cap S}.
    \end{equation*}
    For $S = \emptyset$, we used the convention $\cap \emptyset := \Z$. The last sum contains $2^{|\prohibprog|}$ terms. We wish to replace this exact identity with an approximate version having far fewer terms. To do so, we truncate the above sum and restrict $S$ to a smaller collection $\calX$ of subsets of $\prohibprog$.

    \begin{lemma}
        \label{lem:sieveidentity}
        Let $\prohibprog$ be a finite set of arithmetic progressions in $\Z$. Let $\calX$ be a non-empty collection of subsets of $\prohibprog$ which is closed under containment, i.e.~if $S \in \calX$ and $S' \subset S$ then $S' \in \calX$.
        \begin{enumerate}
            \item If $n\not\in P$ for all $P\in \prohibprog$, then
                  \begin{equation*}
                      \ind{n\not\in P\  \forall P\in \prohibprog} = 1 = \sum_{S\in \calX} (-1)^{|S|} \ind{n \in \cap S}.
                  \end{equation*}
            \item If $n\in P_0$ for some progression $P_0\in \prohibprog$, then
                  \begin{equation}
                      \label{eq:sieveidentity}
                      \ind{n\not\in P\  \forall P\in \prohibprog}  = 0  = \sum_{S\in \calX} (-1)^{|S|} \ind{n \in \cap S}  + \sum_{\substack{P_0\in S \subset \prohibprog\\ S \not\in \calX,\, S\setminus \{P_0\} \in \calX}} (-1)^{|S|} \ind{n\in \cap S}.
                  \end{equation}
        \end{enumerate}
    \end{lemma}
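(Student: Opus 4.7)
The plan is to handle the two cases separately, with the main idea being a symmetric-difference involution with $\{P_0\}$ that cancels pairs of subsets.

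For the first case, observe that if $n \notin P$ for all $P \in \prohibprog$, then $\ind{n \in \cap S} = 0$ for every non-empty $S \subset \prohibprog$. Thus on the right-hand side only $S = \emptyset$ contributes, and one gets the value $1$ provided that $\emptyset \in \appendixsubcoll$. But $\emptyset \in \appendixsubcoll$ follows from the hypotheses: $\appendixsubcoll$ is non-empty, so contains some $S_0$, and by downward closure it contains $\emptyset \subset S_0$.

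For the second case, let $T := \{P \in \prohibprog : n \in P\}$, which contains $P_0$. Note that $\ind{n \in \cap S} = 1$ iff $S \subset T$. So the identity to prove reduces to
\begin{equation*}
    \sum_{\substack{S \in \appendixsubcoll \\ S \subset T}} (-1)^{|S|} \;+\; \sum_{\substack{P_0 \in S \subset T \\ S \notin \appendixsubcoll,\, S \setminus \{P_0\} \in \appendixsubcoll}} (-1)^{|S|} \;=\; 0.
\end{equation*}
To prove this, the plan is to use the involution $S \mapsto S \triangle \{P_0\}$ on the powerset of $T$, which pairs up subsets of opposite parity. Apply this pairing to the first sum: for $S \in \appendixsubcoll$ with $S \subset T$ and $P_0 \in S$, the partner $S \setminus \{P_0\}$ lies in $\appendixsubcoll$ (by downward closure) and is still $\subset T$, hence appears in the sum and cancels $S$. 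So after cancellation the first sum reduces to
\begin{equation*}
    \sum_{\substack{S \in \appendixsubcoll,\, S \subset T \\ P_0 \notin S,\, S \cup \{P_0\} \notin \appendixsubcoll}} (-1)^{|S|},
\end{equation*}
since the $S$'s with $P_0 \notin S$ whose partner $S \cup \{P_0\}$ also lies in $\appendixsubcoll$ cancel out as well. Substituting $S' := S \cup \{P_0\}$ turns this into $-\sum_{S'} (-1)^{|S'|}$ where $S'$ ranges exactly over the sets appearing in the second sum of the claim, which gives the desired cancellation.

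There is no real obstacle here beyond bookkeeping: the key observation is that downward closure makes the involution $S \mapsto S \triangle \{P_0\}$ preserve membership in $\appendixsubcoll$ whenever one removes $P_0$, so only one direction of the involution can fail to stay inside $\appendixsubcoll$, and those failures are precisely the pairs $(S, S\cup\{P_0\})$ enumerated by the boundary term in \cref{eq:sieveidentity}.
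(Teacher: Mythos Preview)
Your proof is correct and uses essentially the same mechanism as the paper: the involution $S\mapsto S\triangle\{P_0\}$ together with downward closure of $\appendixsubcoll$. The only cosmetic difference is that the paper starts from the full inclusion--exclusion identity $\sum_{S\subset\prohibprog}(-1)^{|S|}\ind{n\in\cap S}=0$, splits it into four pieces, and cancels the two pieces outside $\appendixsubcoll$ via the involution, whereas you apply the involution directly inside the first sum; since the total is zero these are two sides of the same cancellation.
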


    \begin{proof}
        \begin{enumerate}
            \item If $n$ does not belong to any $P\in \prohibprog$, all the terms in the sum are zero except for $S = \emptyset$.
            \item Suppose $n\in P_0 \in \prohibprog$. By inclusion-exclusion, we know that
                  \begin{equation*}
                      0 = \ind{n\not\in P\  \forall P\in \prohibprog}  = \sum_{S\subset \prohibprog} (-1)^{|S|} \ind{n \in \cap S} = \Bigg(\sum_{S\in \calX}+ \sum_{\substack{S\not\in \calX \\ P_0\not \in S}} + \sum_{\substack{S\not\in \calX \\ P_0\in S\\ S\setminus \{P_0\}\in \calX}}+ \sum_{\substack{S\not\in \calX \\ P_0\in S\\ S\setminus \{P_0\}\not\in \calX}}\Bigg) (-1)^{|S|} \ind{n \in \cap S}.
                  \end{equation*}
                  To obtain the conclusion, note that the second and fourth sums on the right-hand side cancel each other out, since
                  \begin{equation*}
                      \sum_{\substack{P_0\in S \subset \prohibprog\\ S\not\in \calX \\  S\setminus \{P_0\}\not\in \calX}} (-1)^{|S|} \ind{n \in \cap S} = \sum_{\substack{P_0\not\in \T \subset \prohibprog\\ \T\not\in \calX }} (-1)^{|\T \cup \{P_0\}|} \ind{n \in \cap \T} \ind{n\in P_0} = - \sum_{\substack{P_0\not\in S \subset \prohibprog\\ S\not\in \calX }} (-1)^{|S|} \ind{n \in \cap S},
                  \end{equation*}
                  using that $\calX$ is closed under containment in the first equality. \qedhere
        \end{enumerate}
    \end{proof}

    The next lemma shows some cancellation for combinatorial sums having up to $2^{2^{|\Omega|}}$ terms. The short proof below is due to Helfgott and Radziwiłł \cite{HR}.
    \begin{lemma}
        \label{lem:rota}
        Let $\mathcal{A}$ be any collection of subsets of a finite set $\Omega$. Then
        \begin{equation*}
            \abs{\sum_{\substack{\mathcal{B} \subset \mathcal{A}\\ \cup \mathcal{B} = \Omega}} (-1)^{|\mathcal{B}|} } \leq 2^{|\Omega|}.
        \end{equation*}
    \end{lemma}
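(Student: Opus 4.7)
The plan is to rewrite the indicator $\ind{\cup \mathcal{B} = \Omega}$ by applying inclusion-exclusion on $\Omega$ rather than on $\mathcal{B}$, and then to swap the order of summation so that the sum over $\mathcal{B}$ factorises. The key observation is that $\cup \mathcal{B} = \Omega$ if and only if, for every $\omega \in \Omega$, at least one $B \in \mathcal{B}$ contains $\omega$. This translates into the identity
\begin{equation*}
\ind{\cup \mathcal{B} = \Omega} \ =\  \prod_{\omega\in \Omega}\Big(1 - \prod_{B\in \mathcal{B}} \ind{\omega \notin B}\Big) \ =\ \sum_{\Omega'\subset \Omega}(-1)^{|\Omega'|}\prod_{B\in \mathcal{B}} \ind{B\cap \Omega' = \emptyset},
\end{equation*}
obtained by expanding the outer product.

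Substituting this into the sum of interest and interchanging summations gives
\begin{equation*}
\sum_{\substack{\mathcal{B}\subset \mathcal{A}\\ \cup\mathcal{B}=\Omega}}(-1)^{|\mathcal{B}|} \ =\ \sum_{\Omega'\subset \Omega}(-1)^{|\Omega'|} \sum_{\mathcal{B}\subset \mathcal{A}}(-1)^{|\mathcal{B}|}\prod_{B\in \mathcal{B}}\ind{B\cap \Omega' = \emptyset}.
\end{equation*}
The crucial feature is that the inner sum factorises completely over $B \in \mathcal{A}$: for each $B$, we either include it in $\mathcal{B}$ (contributing a factor $-\ind{B\cap \Omega' = \emptyset}$) or omit it (contributing a factor $1$). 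Hence
\begin{equation*}
\sum_{\mathcal{B}\subset \mathcal{A}}(-1)^{|\mathcal{B}|}\prod_{B\in \mathcal{B}}\ind{B\cap \Omega' = \emptyset} \ =\ \prod_{B\in \mathcal{A}}\bigl(1 - \ind{B\cap \Omega' = \emptyset}\bigr) \ =\ \prod_{B\in \mathcal{A}}\ind{B\cap \Omega'\neq \emptyset}.
\end{equation*}

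Each such product is either $0$ or $1$, so after taking absolute values the whole expression is bounded by the number of subsets $\Omega'\subset \Omega$, namely $2^{|\Omega|}$. There is no real obstacle here: the only idea is to dualise the inclusion-exclusion so that the sum over $\mathcal{B}\subset \mathcal{A}$ collapses to a product, replacing the a priori doubly-exponential cancellation problem with a singly-exponential trivial bound.
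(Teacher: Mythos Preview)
Your proof is correct and is essentially the same argument as the paper's, up to a change of variable. The paper replaces the indicator $\ind{\cup\mathcal{B}=\Omega}$ by $\pm\sum_{\cup\mathcal{B}\subset W\subset\Omega}(-1)^{|W|}$, swaps sums, and observes that the inner sum $\sum_{\mathcal{B}\subset\mathcal{A},\,\cup\mathcal{B}\subset W}(-1)^{|\mathcal{B}|}$ is $0$ or $1$; your $\Omega'$ is just the complement $\Omega\setminus W$, and the condition $B\cap\Omega'=\emptyset$ is exactly $B\subset W$, so your final product $\prod_{B\in\mathcal{A}}\ind{B\cap\Omega'\neq\emptyset}$ coincides with the paper's inner sum.
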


    \begin{proof}
        Observe that, given two finite sets $\Omega_1\subset \Omega_2$, we have
        \begin{equation}
            \label{eq:trickforcancellation}
            (-1)^{|\Omega_1|} \sum_{\Omega_1 \subset W \subset \Omega_2} (-1)^{|W|} = \ind{\Omega_1=\Omega_2}.
        \end{equation}
        Indeed, this is obvious if $\Omega_1 = \Omega_2$, and if $\Omega_1\neq \Omega_2$ the left-hand side is the expanded form of $(1-1)^{|\Omega_2\setminus \Omega_1|}$.

        This allows us to write
        \begin{equation*}
            \abs{\sum_{\substack{\mathcal{B} \subset \mathcal{A}\\ \cup \mathcal{B} = \Omega}} (-1)^{|\mathcal{B}|} } = \abs{\sum_{\mathcal{B} \subset \mathcal{A} } (-1)^{|\mathcal{B}|} \sum_{\cup \mathcal{B} \subset W \subset \Omega} (-1)^{|W|} } = \abs{\sum_{W\subset \Omega } (-1)^{|W|} \sum_{\substack{\mathcal{B} \subset \mathcal{A}\\ \cup \mathcal{B} \subset W}} (-1)^{|\mathcal{B}|} }.
        \end{equation*}
        The inner sum has the shape of \cref{eq:trickforcancellation}, with $\Omega_1 = \emptyset$ and $\Omega_2 = \{A\in \mathcal{A} \mid A\subset W\}$, so is at most $1$ in absolute value. Since the outer sum has $\leq 2^{|\Omega|}$ terms, the claim follows.
    \end{proof}

    Assuming that the progressions in $\prohibprog$ have square-free moduli, and with an additional hypothesis on the shape of $\calX$, we can use \cref{lem:rota} to show that the two sums in \cref{eq:sieveidentity} exhibit some cancellation.

    \begin{proposition}
        \label{prop:sievesquare-free}
        Let $\prohibprog$ be a finite set of arithmetic progressions in $\Z$ with square-free moduli. Let
        \begin{equation*}
            \prohibprog^{\cap} := \{\cap S : S \subset \prohibprog\}.
        \end{equation*}

        Fix a non-empty subset $X \subset \prohibprog^{\cap}$ that is closed under containment, i.e.~if a progression $P\in \prohibprog^{\cap}$ is an element of $X$, then so are all $P'\in \prohibprog^{\cap}$ with $P' \supset P$. Let $\calX$ be the collection of subsets of $\prohibprog$ defined by\footnote{Note that $\emptyset \in \calX$ since $\cap \emptyset = \Z$ by convention.}
        \begin{equation*}
            \calX = \{S \subset \prohibprog: \cap S \in X\}.
        \end{equation*}
        Then
        \begin{equation}
            \label{eq:bonferronilike}
            \ind{n\not\in P\  \forall P\in \prohibprog}  = \sum_{S\in \calX} (-1)^{|S|} \ind{n \in \cap S}  + O \left(\sum_{R\in \partial X }  3^{\omega(\modulus_{R})} \ind{n\in R}\right)
        \end{equation}
        where
        \begin{equation*}
            \partial X := \{R\in \prohibprog^{\cap}: R\not\in X \text{ and }R = P\cap P'\text{ for some } P\in X \text{ and }P'\in \prohibprog \}.
        \end{equation*}
        Moreover, the first sum can be rewritten as
        \begin{equation*}
            \sum_{S\in \calX} (-1)^{|S|} \ind{n \in \cap S} = \sum_{P\in X } c_{P} \ind{n\in P}
        \end{equation*}
        for some coefficients $c_{P}\in \Z$ satisfying $|c_{P}| \leq 2^{\omega(\modulus_{P})}$.
    \end{proposition}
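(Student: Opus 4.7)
The plan is to combine Lemma~\ref{lem:sieveidentity} with the cancellation bound of Lemma~\ref{lem:rota} via a reduction to prime-condition profiles. Throughout, I may restrict attention to $S \subset \prohibprog_n := \{P \in \prohibprog : n \in P\}$, since the indicator $\ind{n \in \cap S}$ kills all other terms. Since the moduli are square-free, every progression $Q$ containing $n$ is uniquely specified by its set of prime conditions $\mathrm{cond}(Q) := \{(p, n \bmod p) : p \mid \modulus_Q\}$, which has size $\omega(\modulus_Q)$. Intersections of such progressions correspond to unions of condition sets, and containment $Q \subset Q'$ corresponds to reverse containment $\mathrm{cond}(Q') \subset \mathrm{cond}(Q)$.

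For the coefficient statement, I regroup the main sum by $R := \cap S$, which gives
\begin{equation*}
\sum_{S \in \appendixsubcoll} (-1)^{|S|} \ind{n \in \cap S} \;=\; \sum_{R \in X} c_R \ind{n \in R}, \qquad c_R := \sum_{S \subset \prohibprog:\,\cap S = R} (-1)^{|S|}.
\end{equation*}
The condition $\cap S = R$ forces $S \subset \{P \in \prohibprog : P \supset R\}$ and $\bigcup_{P \in S} \mathrm{cond}(P) = \mathrm{cond}(R)$, so Lemma~\ref{lem:rota} applied with $\Omega = \mathrm{cond}(R)$ and $\mathcal{A}$ the family $(\mathrm{cond}(P))_{P \supset R}$ gives $|c_R| \leq 2^{|\mathrm{cond}(R)|} = 2^{\omega(\modulus_R)}$ as desired.

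For the error term, I distinguish two cases. If $\prohibprog_n = \emptyset$, Case~1 of Lemma~\ref{lem:sieveidentity} directly gives main sum $= 1 = \ind{n \notin P \,\forall P}$, and the right-hand side is also empty, so the error vanishes. Otherwise I pick any $P_0 \in \prohibprog_n$ and apply Case~2, which rewrites the error as
\begin{equation*}
- \sum_{\substack{T \subset \prohibprog_n \setminus \{P_0\} \\ \cap T \in X,\ \cap T \cap P_0 \notin X}} (-1)^{|T|}.
\end{equation*}
I group this sum by $R^* := (\cap T) \cap P_0$, which automatically lies in $\partial X$, is contained in $P_0$, and contains $n$. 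For each fixed $R^*$ with profile $B^* := \mathrm{cond}(R^*)$, I further group the $T$'s by $U := \bigcup_{P \in T} \mathrm{cond}(P)$, which must be a subset of $B^*$; Lemma~\ref{lem:rota} applied with $\Omega = U$ and $\mathcal{A} = (\mathrm{cond}(P))_{P \in \prohibprog_n \setminus \{P_0\},\ \mathrm{cond}(P) \subset U}$ bounds the inner alternating sum by $2^{|U|}$. Applying the triangle inequality at both levels and relaxing to all $U \subset B^*$ yields the key geometric sum
\begin{equation*}
\sum_{U \subset B^*} 2^{|U|} \;=\; (1+2)^{|B^*|} \;=\; 3^{\omega(\modulus_{R^*})},
\end{equation*}
after which summing over $R^* \in \partial X$ (and relaxing $R^* \subset P_0$ to $n \in R^*$) produces the desired bound. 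The decisive observation is that the seemingly mysterious factor $3^{\omega(\modulus_{R^*})}$ drops out effortlessly from this elementary identity once the error is organized by prime-condition profiles rather than by the individual subsets $T$; this is also precisely why the square-freeness hypothesis is essential.
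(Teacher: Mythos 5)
Your proof is correct and follows essentially the same route as the paper's: both combine Lemma~\ref{lem:sieveidentity} with Lemma~\ref{lem:rota} via the square-free reduction, and the factor $3^{\omega(\modulus_R)}$ in the error term arises from the same elementary identity (your binomial sum $\sum_{U\subset B^*}2^{|U|}=3^{|B^*|}$ is the paper's $\sum_{d\mid \modulus_R}2^{\omega(d)}=3^{\omega(\modulus_R)}$ in different notation). The only minor difference is that you reapply Lemma~\ref{lem:rota} directly in the error-term bound, whereas the paper recognizes the inner sum as $-c_P$ and reuses the already-established bound $|c_P|\le 2^{\omega(\modulus_P)}$.
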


    \begin{proof}
        If the left-hand side of \cref{eq:bonferronilike} is $1$, then the equality \cref{eq:bonferronilike} is true by \cref{lem:sieveidentity}. On the other hand, if the left-hand side is $0$, then by \cref{lem:sieveidentity} we have
        \begin{equation*}
            \ind{n\not\in P\  \forall P\in \prohibprog}  = \sum_{S\in \calX} (-1)^{|S|} \ind{n \in \cap S}  +  \sum_{\substack{P_0\in S \subset \prohibprog\\ S \not\in \calX,\, S\setminus \{P_0\} \in \calX}} (-1)^{|S|} \ind{n\in \cap S},
        \end{equation*}
        where, in the last sum, $P_0 \in \prohibprog$ is an arbitrary progression containing $n$. We will bound the second sum at the end of this proof.

        Let us analyse the first sum. We have
        \begin{equation*}
            \sum_{S\in \calX} (-1)^{|S|} \ind{n \in \cap S} = \sum_{P\in X } c_{P} \ind{n\in P}
        \end{equation*}
        where, for $P\in X $,
        \begin{equation*}
            c_{P} := \sum_{\substack{S\in \calX\\ \cap S = P}} (-1)^{|S|}  = \sum_{\substack{S\subset \{P'\in \prohibprog : P'\supset P\} \\ \cap S = P}} (-1)^{|S|}.
        \end{equation*}

        Fix some $P\in X$. If $S$ is a set of progressions containing $P$, the condition $\cap S = P$ is equivalent to
        \begin{equation*}
            \mathrm{lcm} \{\modulus_{P'} : P'\in S\} = \modulus_{P}.
        \end{equation*}
        Since all progressions in $\prohibprog$ have square-free moduli, this is in turn equivalent to
        \begin{equation*}
            \bigcup_{P'\in S} \{p : p\mid \modulus_{P'}\} = \{p : p\mid \modulus_{P}\}.
        \end{equation*}
        Let $\Omega_P := \{p : p\mid \modulus_{P}\}$,
        \begin{equation*}
            \mathcal{A}_P := \Big\{ \{p : p\mid \modulus_{P'}\} : P'\in \prohibprog, P'\supset P\Big\}
        \end{equation*}
        and, for every set $S$ of progressions containing $P$, let
        \begin{equation*}
            \mathcal{B}_{P}(S) := \Big\{ \{p : p\mid \modulus_{P'}\} : P'\in S\Big\}.
        \end{equation*}
        Note that $\mathcal{B}_P(S)$ determines $S$, since a progression $P'\in \prohibprog$ with $P'\supset P$ is uniquely determined by its modulus $\modulus_{P'}$, which in turn is uniquely determined by its set of prime factors. Therefore,
        \begin{equation*}
            c_P =  \sum_{\substack{S\subset \{P'\in \prohibprog : P'\supset P\} \\ \cap S = P}} (-1)^{|S|} = \sum_{\substack{\mathcal{B} \subset \mathcal{A}_P \\ \cup \mathcal{B} = \Omega_P}} (-1)^{|\mathcal{B}|}.
        \end{equation*}
        By \cref{lem:rota}, we obtain $|c_P| \leq 2^{|\Omega_P|} = 2^{\omega(\modulus_{P})}$.

        We now turn to the remainder term. We suppose that $n\in P_0$ for some $P_0\in \prohibprog$. We operate a change of variables and write ${S' = S \setminus \{P_0\}}$, $P = \cap S'$ and ${R = \cap S}$. The conditions $S \not\in \calX$ and $S\setminus \{P_0\} \in \calX$ become $R\not\in X$ and $P\in X$, respectively. Hence, we have
        \begin{equation*}
            \sum_{\substack{P_0\in S \subset \prohibprog\\ S \not\in \calX,\, S\setminus \{P_0\} \in \calX}} (-1)^{|S|} \ind{n\in \cap S}
            = \sum_{\substack{R\in \prohibprog^{\cap} \\ R\not \in X }}  \ind{n\in R}  \sum_{\substack{P\in X \\ R = P\cap P_0}}\sum_{\substack{S'\in \calX\\ \cap S'= P}} (-1)^{|S'|+1}.
        \end{equation*}
        The inner sum is exactly $-c_P$, which is $O\big(2^{\omega(\modulus_{P})}\big)$. Recalling that, for fixed $R$, a progression $P\supset R$ is uniquely determined by its modulus $\modulus_{P}$, which divides $\modulus_{R}$, we have
        \begin{equation*}
            \sum_{\substack{P_0\in S \subset \prohibprog\\ S \not\in \calX,\, S\setminus \{P_0\} \in \calX}} (-1)^{|S|} \ind{n\in \cap S}
            = O \left(\sum_{R\in \partial X}  \ind{n\in R}  \sum_{d\mid \modulus_{R}} 2^{\omega(d)}\right).
        \end{equation*}
        The observation that $\sum_{d\mid m} 2^{\omega(d)} = 3^{\omega(m)}$ for all square-free $m\geq 1$ concludes the proof.
    \end{proof}

    \section{Sum without divisibility conditions}
    \label{appendix:probmodel}

    In this section we prove \cref{prop:differencesum}, which quickly follows from the next proposition.

    \begin{proposition}
        \label{prop:probmodel}
        Let $N \geq \frac{1}{H}\exp\!\big((\log H)^{2}\big)$ and let $\N := \mathbb{N} \cap (N, 2N]$.

        For $\indexset \subset \intJ$, define $\D_\indexset$ to be the set of all products $\prod_{i\in \indexset}p_i$ with $p_i\in \primes_i$ for all~$i$.
        Then, for all non-empty $\indexset \subset \intJ$, we have
        \begin{equation*}
            \sum_{n\in \N}  \sum_{\substack{d\in \D_\indexset}}\frac{1}{d} \lambda(n) \lambda(n+d) \ll \frac{\V^{\J} N}{(\log H)^{1/2000}}.
        \end{equation*}
    \end{proposition}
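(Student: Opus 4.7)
Expanding gives $S=\sum_{d\in\D_I}C(d)/d$ with $C(d):=\sum_{n\in\N}\lambda(n)\lambda(n+d)$. The plan is to bound $|S|$ via the triangle inequality by $\sum_{d\in\D_I}|C(d)|/d$ and then appeal to the Matomäki--Radziwiłł--Tao two-point averaged Chowla bound \cite[Theorem~1.3]{MRT}, which in its quantitative form yields an absolute constant $c_0>0$ such that
\[
\sum_{1\le|h|\le H_1}|C(h)|\ll \frac{H_1 N}{(\log N)^{c_0}}
\]
uniformly for $1\le H_1\le N^{1/3}$; this covers in particular the range $H_1\in[H_0,H]$ relevant to $\D_I$.

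A dyadic decomposition of the $d$-sum then gives, for each integer $k$ with $\log_2 H_0\le k\le \log_2 H$,
\[
\sum_{\substack{d\in\D_I\\ 2^k\le d<2^{k+1}}}\frac{|C(d)|}{d}\le 2^{-k}\sum_{|h|\le 2^{k+1}}|C(h)|\ll \frac{N}{(\log N)^{c_0}}.
\]
Summing over the $O(\log H)$ dyadic blocks yields $|S|\ll N(\log H)/(\log N)^{c_0}$. Exploiting the hypothesis $\log N\ge(\log H)^2$ simplifies this to $|S|\ll N/(\log H)^{2c_0-1}$, which, since $L^J\ge 1$, beats the target $L^J N/(\log H)^{1/2000}$ as soon as $2c_0-1\ge 1/2000$.

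Should the available MRT exponent $c_0$ fall below this threshold, the necessary extra leverage is obtained by implementing the argument as a genuine circle-method integral. Writing $S=\int_0^1|F(\alpha)|^2\,\overline{H_I(\alpha)}\,d\alpha+O(L^J H)$ with $F(\alpha):=\sum_{n\in\N}\lambda(n)e(n\alpha)$ and $H_I(\alpha):=\sum_{d\in\D_I}e(d\alpha)/d$ (the boundary error being negligible as $H=e^{o(\log N)}$), one splits $[0,1]$ into major arcs around rationals $a/q$ with $q$ polylogarithmic and a minor-arc complement $\mathfrak{m}$. The minor-arc contribution is bounded by $\|H_I\|_\infty\le L^{|I|}\le L^J$ times the MRT $L^2$-estimate $\int_{\mathfrak{m}}|F(\alpha)|^2\,d\alpha\ll N/(\log N)^{c_0}$, while the major arcs are controlled through Siegel--Walfisz-type cancellation of $\lambda$ in arithmetic progressions (noting that $H_I$ is strongly concentrated near rationals with small denominator).

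The principal obstacle throughout is the sharpness of the MRT input $c_0$: the polynomial-in-$\log H$ saving it provides must survive the $\log H$ loss introduced by the dyadic summation (or equivalently by the family of relevant major arcs). The hypothesis $\log N\ge(\log H)^2$ is essential here; it effectively doubles the MRT exponent when re-expressed as a power of $\log H$, which is precisely the mechanism by which the polynomial savings in $\log H$ close the argument.
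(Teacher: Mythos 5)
Your Plan A does not close: the averaged two‑point Chowla bound that \cite{MRT} provides has a logarithmic saving far below the threshold $c_0 > 1/2$ that your dyadic accounting requires. After absorbing the $\log H$ loss from the dyadic sum via $\log N \ge (\log H)^2$, you need $|S| \ll N/(\log H)^{2c_0-1}$ to beat the trivial bound, and for any exponent $c_0$ in the $\sim 10^{-3}$ range (which is what that circle of ideas produces) the quantity $2c_0-1$ is \emph{negative}. So Plan A is a net loss of a power of $\log H$, not a gain.

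Plan B has a more serious problem: the proposed minor‑arc input, an ``MRT $L^2$‑estimate'' $\int_{\mathfrak m}|F(\alpha)|^2\,d\alpha \ll N/(\log N)^{c_0}$, is simply false. By Parseval $\int_0^1 |F|^2 = N$, and the $L^2$‑mass of $F(\alpha)=\sum_{n\in \N}\lambda(n)e(n\alpha)$ is essentially equidistributed; it does \emph{not} concentrate on major arcs (that would mean $\lambda$ correlates with small‑conductor characters, i.e.\ the opposite of the truth). You cannot get any power‑of‑log saving from the minor arcs of $|F|^2$ alone, no matter how $\mathfrak m$ is chosen. The claim that $H_I$ concentrates near small‑denominator rationals is also unjustified as stated: the $d\in\D_\indexset$ are products of distinct large primes from disjoint ranges, and the relevant structural information about $H_I$ is quantitative and must be proved (in the paper it appears as a fourth‑moment bound on the restriction of $H_I$ to a dyadic block).

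The paper's actual route is noticeably different and worth contrasting. Instead of the global $\int |F|^2 \overline{H_I}$, one fixes a dyadic block $\otherK$, averages in an extra shift $m\le \otherK$, and writes the sum as $\frac{1}{\otherK}\sum_{n\in\N}\int_0^1 Q\,F_n\,G_n$, where $Q(\alpha)=\sum_{d\in\D_\indexset\cap(\otherK/2,\otherK]}e(\alpha d)/d$ and $F_n, G_n$ are \emph{short} exponential sums over $[n,n+\otherK]$. The set where $|Q|$ is large is controlled by a fourth‑moment estimate $\int_0^1 |Q|^4 \ll \curlyLexp{4\repR}/(\otherK(\log\otherK)^4)$ (a sieve/arithmetic count), and on that exceptional set one invokes \cite[Theorem~1.3]{MRT} — a uniform‑in‑$\alpha$, $L^1$‑in‑$n$ bound for short‑interval exponential sums of $\lambda$ — to get a $(\log H)^{-1/350}$ saving. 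This combination of (i) a local fourth‑moment bound on $Q$, (ii) a Chebyshev/level‑set split in $\alpha$, and (iii) the short‑interval MRT exponential‑sum bound is the mechanism that produces the polynomial‑in‑$\log H$ saving; neither ingredient is the two‑point Chowla bound nor a minor‑arc $L^2$ estimate of the type you propose.
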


    \begin{proof}[Proof of \cref{prop:differencesum} assuming \cref{prop:probmodel}]
        We can expand the difference $S_2 - S_1$ as
        \begin{equation*}
            \sum_{\substack{\emptyset \neq \indexset \subset \intJ}} S(\indexset),
        \end{equation*}
        where
        \begin{equation*}
            S(\indexset) := \sum_{n\in \N} \sum_{(p_1,\ldots, p_{\J})\in \primes_1\times\cdots\times \primes_{\J}} \Bigg( \prod_{i\in \indexset} \frac{1}{p_i}\Bigg) \Bigg( \prod_{i\in \intJ\setminus \indexset} \ind{p_i \mid n}\Bigg) \lambda(n) \lambda(n+p_1\cdots p_{\J}).
        \end{equation*}
        Changing variables $n = md$ with $d = \prod_{i\in \intJ\setminus \indexset}p_i$ gives
        \begin{equation*}
            S(\indexset) = \sum_{d\in \D_{\intJ\setminus \indexset}}\lambda(d)^2 \sum_{\frac{N}{d}<m\leq \frac{2N}{d}}\sum_{d'\in \D_\indexset} \frac{1}{d'} \lambda(m) \lambda(m+d').
        \end{equation*}
        By \cref{prop:probmodel}, the double sum over $m$ and $d'$ is
        \begin{equation*}
            \ll \frac{\V^{\J} N/d}{(\log H)^{1/2000}}
        \end{equation*}
        Hence,
        \begin{equation*}
            S(\indexset) \ll  \frac{\V^{\J} N}{(\log H)^{1/2000}}\sum_{d\in \D_{\intJ\setminus \indexset}}\frac{1}{d} \ll \frac{\Vtopower{2\J} N}{(\log H)^{1/2000}}
        \end{equation*}
        for every non-empty $\indexset \subset \intJ$.
        Therefore
        \begin{equation*}
            |S_2-S_1| \ll 2^{\J} \Vtopower{2\J} \frac{N}{(\log H)^{1/2000}}.
        \end{equation*}
        Note that $2^J\Vtopower{2\J} \ll (\log H)^{\epsone}$ by \cref{lem:primesets}, so $|S_2-S_1| \ll N (\log H)^{-1/2500}$ if $\epsone$ is sufficiently small.
    \end{proof}

    \begin{lemma}
        \label{lem:fourthmoment}
        Fix a non-empty $\indexset\subset \intJ$ and let $\D_\indexset$ be as in \cref{prop:probmodel}. Let $\Mvar \in [H_0, H]$. Define
        \begin{equation*}
            Q(\alpha) := \sum_{\substack{d\in \D_\indexset\\ d\in (\Mvar/2, \Mvar]}} \frac{e( \alpha d)}{d},
        \end{equation*}
        where, as usual, $e(x):=\exp(2\pi i x)$. Then,
        \begin{equation*}
            \int_{0}^1 |Q(\alpha)|^4 \, d\alpha \ll \frac{\Vtopower{4\J}}{\Mvar (\log \Mvar)^{4}}.
        \end{equation*}
    \end{lemma}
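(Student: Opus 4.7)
The plan is to apply Parseval's identity, reducing the fourth moment to a weighted count of additive quadruples in $\D_\indexset \cap (\otherK/2, \otherK]$, which I then estimate using the multiplicative structure of $\D_\indexset$ together with a Selberg sieve bound.

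Concretely, by Parseval,
\begin{equation*}
\int_0^1 |Q(\alpha)|^4\, d\alpha = \sum_{\substack{d_1+d_2=d_3+d_4 \\ d_i \in \D_\indexset \cap (\otherK/2,\otherK]}} \frac{1}{d_1 d_2 d_3 d_4} \leq \frac{16 E}{\otherK^4},
\end{equation*}
where $E$ counts admissible quadruples; it therefore suffices to prove $E \ll \otherK^3 \curlyLexp{4\repR}/(\log \otherK)^4$. I would split $E$ into the diagonal part (with $\{d_1,d_2\} = \{d_3,d_4\}$) and the off-diagonal part. The key input is a cardinality bound on $|\D_\indexset \cap (\otherK/2, \otherK]|$ obtained as follows: let $j_0 := \max \indexset$ and write each $d = ap$ with $p \in \primes_{j_0}$ and $a \in \D_{\indexset \setminus \{j_0\}}$; by part \cref{item:primesets8} of \cref{lem:primesets}, $a \leq p^{1/10}$, so $a \leq \otherK^{1/11}$ and $\log(\otherK/a) \asymp \log \otherK$. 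Chebyshev's estimate $|\primes_{j_0} \cap (\otherK/(2a), \otherK/a]| \ll \otherK/(a \log \otherK)$, summed over $a$, then yields $|\D_\indexset \cap (\otherK/2, \otherK]| \ll \otherK \curlyLexp{\repR}/\log \otherK$. The diagonal contributes at most $2|\D_\indexset \cap (\otherK/2,\otherK]|^2$ to $E$, giving a contribution of $O(\curlyLexp{2\repR}/(\otherK^2 (\log \otherK)^2))$ to $\int |Q|^4$, which is much smaller than our target for $\otherK$ large.

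For the off-diagonal contribution, I would set $h := d_3 - d_1 = d_2 - d_4 \neq 0$ and $N(h) := \#\{d : d, d+h \in \D_\indexset \cap (\otherK/2, \otherK]\}$, then prove the uniform Selberg sieve bound $\sup_{h \neq 0} N(h) \ll \otherK \curlyLexp{2\repR}/(\log \otherK)^2$. Writing $d = ap$ and $d+h = a'p'$ as above, this reduces, for each fixed $(a,a') \in \D_{\indexset \setminus \{j_0\}}^2$, to counting pairs of primes $(p,p') \in \primes_{j_0}^2$ satisfying the linear equation $a'p' - ap = h$ with both primes in prescribed short intervals; a standard Selberg upper-bound sieve supplies the saving of $(\log \otherK)^{-2}$, with singular series uniformly $O(1)$ since every prime in $\primes$ exceeds $H_0$, vastly larger than any relevant scale. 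Finally, using $\sum_h N(h) = |\D_\indexset \cap (\otherK/2,\otherK]|^2$, the off-diagonal contribution to $E$ is
\begin{equation*}
\sum_{h \neq 0} N(h)^2 \leq \Bigl(\sup_{h \neq 0} N(h)\Bigr) \sum_h N(h) \ll \frac{\otherK^3 \curlyLexp{4\repR}}{(\log \otherK)^4}.
\end{equation*}

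The main obstacle is establishing the uniform sieve bound on $N(h)$: in principle this is a standard Selberg-sieve estimate for pairs of primes in arithmetic progressions, but care is needed to handle the various cases of $\gcd(a,a')$ and the interaction between $h$ and $aa'$, so as to ensure that the singular series stays uniformly bounded.
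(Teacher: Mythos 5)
Your overall strategy is close to the paper's: both start with Parseval's identity, both peel off the largest prime factor $p \in \primes_{j_0}$ so that (by part \cref{item:primesets8} of \cref{lem:primesets}) the cofactor $a$ is tiny compared with $\otherK$, and both appeal to an upper-bound sieve for pairs of primes satisfying a linear equation. The paper works directly with the weighted sums $A(m) := \sum_{d_1 - d_2 = m} (d_1 d_2)^{-1}$ and sums $|A(m)|^2$ over $m$, whereas you pass to unweighted quadruple counts $E$ and then bound the off-diagonal via $\sum_h N(h)^2 \leq (\sup_h N(h)) \sum_h N(h)$. These are essentially equivalent reductions, but your choice of the $\ell^\infty$–$\ell^1$ split is where the argument breaks.

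The genuine gap is your claim that the singular series in the sieve bound for $N(h)$ is uniformly $O(1)$. It is not. After writing $d = ap$, $d+h = a'p'$ and fixing $(a,a')$, the sieve for the linear equation $a'p' - ap = h$ produces a singular series whose local factors at small primes $q$ depend on whether $q \mid h$ (and on $\gcd$ issues with $a, a'$); roughly, $\mathfrak{S}(h) \asymp \prod_{q \mid h} \frac{q}{q-1} \asymp \frac{h}{\varphi(h)}$, which can be as large as $\log\log \otherK$. The fact that the primes being sieved for lie in $\primes_{j_0} \subset (H_0, H)$ is irrelevant here: the singular series is controlled by small primes dividing $h$, not by the size of the sieving variables. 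Consequently $\sup_{h \neq 0} N(h) \ll \otherK \curlyLexp{2\repR} (\log\log \otherK)/(\log \otherK)^2$ is the best you get uniformly, and the sup-argument yields $\int_0^1|Q|^4 \ll \curlyLexp{4\repR}(\log\log\otherK)/(\otherK(\log\otherK)^4)$, which is weaker than the stated lemma. The fix — and what the paper does, after also reducing modulo $b = \gcd(d_1,d_2,m)$ — is to keep the singular series as an explicit arithmetic factor (the paper obtains $\sigma_1(m)/\varphi(m)$ after summing over $b$), and then use the mean-square estimate $\sum_{m \leq \otherK} (\sigma_1(m)/\varphi(m))^2 \ll \otherK$ (as in \cite[Corollary~3.6]{tenenbaum}) rather than a pointwise bound. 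That $\ell^2$ average is bounded even though the pointwise supremum is not, and it is exactly what rescues the $(\log\otherK)^{-4}$ saving.
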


    \begin{proof}
        By Parseval's identity, we can expand
        \begin{equation*}
            \int_{0}^1 |Q(\alpha)|^4 d\alpha = \int_{0}^1 \Bigg\lvert \sum_{|m|\leq \Mvar} \Bigg(\sum_{\substack{d_1,d_2\in \D_{\indexset}\\ d_1, d_2\in (\Mvar/2, \Mvar] \\d_1-d_2=m}}\frac{1}{d_1d_2}\Bigg) e(m\alpha)\Bigg\rvert^2 d\alpha = \sum_{|m|\leq \Mvar}  \Bigg\lvert \sum_{\substack{d_1,d_2\in \D_{\indexset}\\ d_1, d_2\in (\Mvar/2, \Mvar] \\d_1-d_2=m}}\frac{1}{d_1d_2} \Bigg\rvert^2.
        \end{equation*}
        For $m=0$, the inner sum is trivially $\ll 1/{\Mvar}$.

        Fix $m> 0$. Let $N(m, b, \indexset, \Mvar)$ denote the number of pairs $(d_1, d_2)\in \D_{\indexset}\times \D_{\indexset}$ such that $d_1-d_2 = m$, $d_1\in (\Mvar/2,\Mvar]$ and ${\gcd(d_1, d_2, m) = b}$. Observe that $N(m, b, \indexset, \Mvar) = 0$ unless $b\mid m$ and $b\in \D_{\otherindexset}$ for some $\otherindexset\subset \indexset$, in which case we have
        \begin{equation*}
            N(m, b, \indexset, \Mvar) = N\left(\tfrac{m}{b}, 1, \indexset\setminus \otherindexset, \tfrac{\Mvar}{b}\right).
        \end{equation*}
        We thus are led to bound the number of coprime solutions $(e_1, e_2)\in \D_{\indexset\setminus \otherindexset}\times \D_{\indexset\setminus \otherindexset}$ to the equation $e_1-e_2 = m/b$ with $e_1\in (\Mvar/2b, \Mvar/b]$. Let $i_+$ be the largest element of $\indexset\setminus \otherindexset$. We can rewrite $e_i = n_i p_i$ where $n_i\in \D_{(\indexset\setminus \otherindexset) \setminus \{i_+\}}$ and $p_i\in \primes_{i_+}$ for $i\in \{1,2\}$. For fixed $n_1, n_2$, the number of solutions $(p_1, p_2)\in \primes_{i_+}\times \primes_{i_+}$ to the linear equation
        \begin{equation*}
            n_1p_1 - n_2p_2 = \frac{m}{b}
        \end{equation*}
        with $n_1p_1\in (\Mvar/2b, \Mvar/b]$ is
        \begin{equation*}
            \ll \frac{\Mvar/b}{\varphi(n_1)\varphi(n_2)(\log \Mvar/b)^2} \cdot \frac{m/b}{\varphi(m/b)}
        \end{equation*}
        by classical sieve theoretic methods, such as \cite[Proposition~6.22]{opera}. To apply this particular result, we used the fact that $\max(n_1, n_2) \leq (\Mvar/b)^{1/10}$, which holds by property \cref{item:primesets2} of \cref{lem:primesets}.

        Note that $\varphi(n) \gg n$ if $n$ is a product of $\leq \J$ primes, each $\geq H_0$. This is the case for $n_1$ and $n_2$. Therefore,
        \begin{equation*}
            N(m, b, \indexset, \Mvar) \ll  \sum_{n_1, n_2\in \D_{(\indexset\setminus \otherindexset)\setminus \{i_+\}}}\frac{\Mvar/b}{n_1n_2(\log \Mvar/b)^2} \cdot \frac{m/b}{\varphi(m/b)} \ll \Vtopower{2\J}\frac{\Mvar/b}{(\log \Mvar/b)^2} \cdot \frac{m}{\varphi(m)}.
        \end{equation*}
        We conclude that the total number of solutions $(d_1, d_2)$ to $d_1-d_2 = m$ with $d_1, d_2\in (\Mvar/2, \Mvar]$ is
        \begin{equation*}
            \ll \sum_{\substack{b\mid m\\ b< m}}\Vtopower{2\J}\frac{\Mvar/b}{(\log \Mvar/b)^2} \cdot \frac{m}{\varphi(m)} \ll \frac{\Vtopower{2\J} \Mvar}{(\log \Mvar)^2}\cdot \frac{\sigma_1(m)}{\varphi(m)}.
        \end{equation*}

        We thus obtain
        \begin{equation*}
            \int_{0}^1 |Q(\alpha)|^4 d\alpha \ll \frac{1}{\Mvar^2} + \left(\frac{\Vtopower{2\J}}{\Mvar(\log \Mvar)^2}\right)^2 \sum_{m=1}^{\Mvar} \left(\frac{\sigma_1(m)}{\varphi(m)}\right)^2 \ll \frac{\Vtopower{4\J}}{\Mvar (\log \Mvar)^4},
        \end{equation*}
        where we used the elementary estimate \cite[Corollary~3.6]{tenenbaum} in the last inequality.
    \end{proof}

    \begin{proof}[Proof of \cref{prop:probmodel}]
        Let $V_{[\Mvar]} :=\sum_{\substack{d\in \D_{\indexset}\cap (\Mvar/2, \Mvar]}}1/d$. It suffices to show that
        \begin{equation}
            \label{eq:dyadicsufficient}
            T_{\Mvar} := \sum_{n\in \N}  \sum_{\substack{d\in \D_{\indexset}\\ d\in (\Mvar/2, \Mvar]}}\frac{1}{d} \lambda(n) \lambda(n+d) \ll  \left(\frac{\V^{\J}}{\log \Mvar}+V_{[\Mvar]}\right) \frac{N}{(\log H)^{1/1750}}
        \end{equation}
        holds for all $\Mvar\in [H_0, H]$. Indeed, summing this inequality for $\Mvar\in  \{H2^{-j} : j\geq 0\}\cap [H_0, H]$ gives the desired upper bound
        \begin{equation*}
            \sum_{n\in \N}  \sum_{\substack{d\in \D_{\indexset}}}\frac{1}{d} \lambda(n) \lambda(n+d) \ll \Big(\V^{\J}(\log \log H)+\V^{\J} \Big) \frac{N}{(\log H)^{1/1750}} \ll \frac{\V^{\J} N}{(\log H)^{1/2000}}.
        \end{equation*}

        To prove \cref{eq:dyadicsufficient}, we start by introducing a new average over shifts $m\leq \Mvar$ and use the circle method:
        \begin{align*}
            T_{\Mvar} & = \frac{1}{\Mvar} \sum_{m\leq \Mvar} \sum_{n\in \N}   \sum_{\substack{d\in \D_{\indexset}              \\ d\in (\Mvar/2, \Mvar]}}\frac{1}{d} \lambda(n+m) \lambda(n+m+d) + O(\Mvar\V^{\J})\\
                      & = \frac{1}{\Mvar} \sum_{n\in \N} \int_0^1 Q(\alpha) F_n(\alpha) G_n(\alpha) d\alpha + O(\Mvar\V^{\J}),
        \end{align*}
        with $F_n(\alpha):=\sum_{m\leq \Mvar}\lambda(n+m)e(\alpha m)$, $G_n(\alpha):=\sum_{k\leq 2\Mvar}\lambda(n+k)e(-\alpha k)$ and $Q(\alpha)$ as in \cref{lem:fourthmoment}.
        The error term $O(\Mvar\V^{\J})$ is clearly negligible.

        Let $\eps>0$ be a parameter that will be fixed later, and let $E_\eps := \{\alpha\in [0, 1] : |Q(\alpha)| > \eps\}$. Outside of $E_{\eps}$, the function $|Q|$ is small and we have
        \begin{equation*}
            \sum_{n\in \N} \int_{[0, 1]\setminus E_{\eps}} |Q||F_n| |G_n| \leq \eps N \norm{F_n}_2 \norm{G_n}_2 \ll \eps \Mvar N.
        \end{equation*}
        On the other hand, the Lebesgue measure of $E_{\eps}$ is $\ll \Vtopower{4\J}/(\eps^4 \Mvar (\log \Mvar)^4)$ by \cref{lem:fourthmoment} and Markov's inequality. Hence
        \begin{equation*}
            \sum_{n\in \N} \int_{E_{\eps}} |Q||F_n| |G_n| \ll \frac{\Vtopower{4\J} \norm{Q}_{\infty} }{\eps^4 \Mvar (\log \Mvar)^4} \norm{\sum_{n\in \N} |F_n||G_n|}_{\infty}\!\! \ll \frac{\Vtopower{4\J}  \, V_{[\Mvar]}}{\eps^4 \Mvar (\log \Mvar)^4}\cdot \Mvar \norm{\sum_{n\in \N} |F_n|}_{\infty}.
        \end{equation*}
        We now make crucial use of \cite[Theorem~1.3]{MRT} to obtain
        \begin{equation*}
            \norm{\sum_{n\in \N} |F_n|}_{\infty} \!\! = \sup_{\alpha\in \R}\, \sum_{n\in \N} \abs{\sum_{n\leq n'\leq n+\Mvar} \lambda(n') e(\alpha n')} \ll \left((\log \Mvar)^{-1/2} + (\log N)^{-1/700} \right) \Mvar N.
        \end{equation*}
        Since $\Mvar \geq H_0$ and $\log N \gg (\log H)^2$, this upper bound is $\ll (\log H)^{-c'} \Mvar N$ where $c' = 1/350$.

        Putting everything together, we conclude that
        \begin{equation*}
            T_{\Mvar}  \ll \frac{1}{\Mvar}\left(\eps +\frac{\Vtopower{4\J}\, V_{[\Mvar]}}{\eps^4 (\log \Mvar )^4 (\log H)^{c'}} \right) \Mvar N.
        \end{equation*}
        Choosing $\eps = \V^{\J} (\log \Mvar )^{-1} (\log H)^{-c'/5}$, we deduce the claimed bound \cref{eq:dyadicsufficient}.
    \end{proof}
\end{appendices}

\phantom{\cite{mrtsigns}}
\vspace{-1cm}
\bibliography{Sections/article_chowla}
\bibliographystyle{amsplain}

\end{document}